\DeclareMathOperator{\Add}{\textup{Add}}
\DeclareMathOperator{\Alg}{\textup{Alg}}
\DeclareMathOperator{\Ann}{\textup{Ann}}
\DeclareMathOperator{\Arr}{\textup{Arr}}
\DeclareMathOperator{\Art}{\textup{Art}}
\DeclareMathOperator{\Ass}{\textup{Ass}}
\DeclareMathOperator{\Autsh}{\underline{\textup{Aut}}}
\DeclareMathOperator{\Bi}{\textup{B}}
\DeclareMathOperator{\CAdd}{\textup{CAdd}}
\DeclareMathOperator{\CAlg}{\textup{CAlg}}
\DeclareMathOperator{\CMon}{\textup{CMon}}
\DeclareMathOperator{\CPMon}{\textup{CPMon}}
\DeclareMathOperator{\CRings}{\textup{CRings}}
\DeclareMathOperator{\CSMon}{\textup{CSMon}}
\DeclareMathOperator{\CaCl}{\textup{CaCl}}
\DeclareMathOperator{\Cart}{\textup{Cart}}
\DeclareMathOperator{\Cl}{\textup{Cl}}
\DeclareMathOperator{\Coh}{\textup{Coh}}
\DeclareMathOperator{\Coker}{\textup{Coker}}
\DeclareMathOperator{\Der}{\textup{Der}}
\DeclareMathOperator{\End}{\textup{End}}
\DeclareMathOperator{\Endsh}{\underline{\textup{End}}}
\DeclareMathOperator{\Ext}{\textup{Ext}}
\DeclareMathOperator{\Extsh}{\underline{\textup{Ext}}}
\DeclareMathOperator{\FAdd}{\textup{FAdd}}
\DeclareMathOperator{\FCoh}{\textup{FCoh}}
\DeclareMathOperator{\FGrad}{\textup{FGrad}}
\DeclareMathOperator{\FLoc}{\textup{FLoc}}
\DeclareMathOperator{\FMod}{\textup{FMod}}
\DeclareMathOperator{\FPMon}{\textup{FPMon}}
\DeclareMathOperator{\FRep}{\textup{FRep}}
\DeclareMathOperator{\FSMon}{\textup{FSMon}}
\DeclareMathOperator{\FVect}{\textup{FVect}}
\DeclareMathOperator{\Fibr}{\textup{Fibr}}
\DeclareMathOperator{\Fix}{\textup{Fix}}
\DeclareMathOperator{\Fl}{\textup{Fl}}
\DeclareMathOperator{\Fr}{\textup{Fr}}
\DeclareMathOperator{\Funct}{\textup{Funct}}
\DeclareMathOperator{\GAlg}{\textup{GAlg}}
\DeclareMathOperator{\GExt}{\textup{GExt}}
\DeclareMathOperator{\GHom}{\textup{GHom}}
\DeclareMathOperator{\GL}{\textup{GL}}
\DeclareMathOperator{\GMod}{\textup{GMod}}
\DeclareMathOperator{\GRis}{\textup{GRis}}
\DeclareMathOperator{\GRiv}{\textup{GRiv}}
\DeclareMathOperator{\Gl}{\textup{Gl}}
\DeclareMathOperator{\Grad}{\textup{Grad}}
\DeclareMathOperator{\Hilb}{\textup{Hilb}}
\DeclareMathOperator{\Hl}{\textup{H}}
\DeclareMathOperator{\Homsh}{\underline{\textup{Hom}}}
\DeclareMathOperator{\ISym}{\textup{Sym}^*}
\DeclareMathOperator{\Imm}{\textup{Im}}
\DeclareMathOperator{\Irr}{\textup{Irr}}
\DeclareMathOperator{\Iso}{\textup{Iso}}
\DeclareMathOperator{\Isosh}{\underline{\textup{Iso}}}
\DeclareMathOperator{\LAdd}{\textup{LAdd}}
\DeclareMathOperator{\LAlg}{\textup{LAlg}}
\DeclareMathOperator{\LMon}{\textup{LMon}}
\DeclareMathOperator{\LPMon}{\textup{LPMon}}
\DeclareMathOperator{\LRings}{\textup{LRings}}
\DeclareMathOperator{\LSMon}{\textup{LSMon}}
\DeclareMathOperator{\Left}{\textup{L}}
\DeclareMathOperator{\Lex}{\textup{Lex}}
\DeclareMathOperator{\ML}{\textup{ML}}
\DeclareMathOperator{\MLex}{\textup{MLex}}
\DeclareMathOperator{\Mon}{\textup{Mon}}
\DeclareMathOperator{\Ob}{\textup{Ob}}
\DeclareMathOperator{\Obj}{\textup{Obj}}
\DeclareMathOperator{\PDiv}{\textup{PDiv}}
\DeclareMathOperator{\PGL}{\textup{PGL}}
\DeclareMathOperator{\PML}{\textup{PML}}
\DeclareMathOperator{\PMLex}{\textup{PMLex}}
\DeclareMathOperator{\PMon}{\textup{PMon}}
\DeclareMathOperator{\Picsh}{\underline{\textup{Pic}}}
\DeclareMathOperator{\Pro}{\textup{Pro}}
\DeclareMathOperator{\Proj}{\textup{Proj}}
\DeclareMathOperator{\QAdd}{\textup{QAdd}}
\DeclareMathOperator{\QAlg}{\textup{QAlg}}
\DeclareMathOperator{\QCoh}{\textup{QCoh}}
\DeclareMathOperator{\QMon}{\textup{QMon}}
\DeclareMathOperator{\QPMon}{\textup{QPMon}}
\DeclareMathOperator{\QRings}{\textup{QRings}}
\DeclareMathOperator{\QSMon}{\textup{QSMon}}
\DeclareMathOperator{\Rings}{\textup{Rings}}
\DeclareMathOperator{\Riv}{\textup{Riv}}
\DeclareMathOperator{\SFibr}{\textup{SFibr}}
\DeclareMathOperator{\SMLex}{\textup{SMLex}}
\DeclareMathOperator{\SMex}{\textup{SMex}}
\DeclareMathOperator{\SMon}{\textup{SMon}}
\DeclareMathOperator{\SchI}{\textup{SchI}}
\DeclareMathOperator{\Sh}{\textup{Sh}}
\DeclareMathOperator{\Soc}{\textup{Soc}}
\DeclareMathOperator{\Specsh}{\underline{\textup{Spec}}}
\DeclareMathOperator{\Stab}{\textup{Stab}}
\DeclareMathOperator{\Supp}{\textup{Supp}}
\DeclareMathOperator{\Sym}{\textup{Sym}}
\DeclareMathOperator{\TMod}{\textup{TMod}}
\DeclareMathOperator{\Top}{\textup{Top}}
\DeclareMathOperator{\Tor}{\textup{Tor}}
\DeclareMathOperator{\alt}{\textup{ht}}
\DeclareMathOperator{\car}{\textup{char}}
\DeclareMathOperator{\degtr}{\textup{degtr}}
\DeclareMathOperator{\depth}{\textup{depth}}
\DeclareMathOperator{\divis}{\textup{div}}
\DeclareMathOperator{\et}{\textup{et}}
\DeclareMathOperator{\ffpSch}{\textup{ffpSch}}
\DeclareMathOperator{\h}{\textup{h}}
\DeclareMathOperator{\ilim}{\displaystyle{\lim_{\longrightarrow}}}
\DeclareMathOperator{\indim}{\textup{inj dim}}
\DeclareMathOperator{\lf}{\textup{LF}}
\DeclareMathOperator{\op}{\textup{op}}
\DeclareMathOperator{\ord}{\textup{ord}}
\DeclareMathOperator{\pd}{\textup{pd}}
\DeclareMathOperator{\plim}{\displaystyle{\lim_{\longleftarrow}}}
\DeclareMathOperator{\pr}{\textup{pr}}
\DeclareMathOperator{\pt}{\textup{pt}}
\DeclareMathOperator{\rk}{\textup{rk}}
\DeclareMathOperator{\tr}{\textup{tr}}
\DeclareMathOperator{\type}{\textup{r}}
\DeclareMathOperator*{\colim}{\textup{colim}}
\theoremstyle{plain}
\newtheorem{thm}{Theorem}[section]
\newtheorem{lem}[thm]{Lemma}
\newtheorem{prop}[thm]{Proposition}
\theoremstyle{definition}
\newtheorem{defn}[thm]{Definition} 
\newtheorem{ex}[thm]{Example}
\newtheorem{rmk}[thm]{Remark}
\numberwithin{thm}{section}
\newcounter{x}\setcounter{x}{1}
\renewcommand{\et}{\textup{\'et}}
\newcommand{\red}{{\rm red}}
\newcommand{\EF}{{\rm EFin}}
\newcommand{\codim}{{\rm codim}}
\newcommand{\Pic}{{\rm Pic}}
\newcommand{\Div}{{\rm Div}}
\newcommand{\Strat}{{\rm Str}}
\newcommand{\Hom}{{\rm Hom}}
\newcommand{\Loc}{{\rm Loc}}
\newcommand{\Spec}{{\rm Spec \,}}
\newcommand{\Crys}{{\rm Crys}}
\newcommand{\Aff}{{\rm Aff}}
\newcommand{\Gal}{{\rm Gal}}
\newcommand{\Fdiv}{{\rm Fdiv}}
\newcommand{\FDiv}{{\rm Fdiv}}
\newcommand{\Ker}{{\rm Ker}}
\newcommand{\Aut}{{\rm Aut}}
\newcommand{\pp}{{\mathfrak{p}}}
\newcommand{\sB}{{\mathcal B}}
\newcommand{\sC}{{\mathcal C}}
\newcommand{\sE}{{\mathcal E}}
\newcommand{\sF}{{\mathcal F}}
\newcommand{\sG}{{\mathcal G}}
\newcommand{\sO}{{\mathcal O}}
\newcommand{\sU}{{\mathcal U}}
\newcommand{\sX}{{\mathcal X}}
\newcommand{\sY}{{\mathcal Y}}
\newcommand{\A}{{\mathbb A}}
\newcommand{\B}{{\mathbb B}}
\newcommand{\C}{{\mathbb C}}
\newcommand{\E}{{\mathbb E}}
\newcommand{\F}{{\mathbb F}}
\newcommand{\G}{{\mathbb G}}
\newcommand{\M}{{\mathbb M}}
\newcommand{\N}{{\mathbb N}}
\newcommand{\Q}{{\mathbb Q}}
\newcommand{\R}{{\mathbb R}}
\newcommand{\Z}{{\mathbb Z}}
\newcommand{\NN}{\textup{N}}
\newcommand{\Mod}{\text{\sf Mod}}
\newcommand{\Vect}{\text{\sf Vect}}
\newcommand{\Rep}{\text{\sf Rep}}
\newcommand{\id}{{\rm id\hspace{.1ex}}}
\newcommand{\ind}{{\text{\sf ind}\hspace{.1ex}}}
\newcommand{\Cov}{{\mathcal Cov}}
\newcommand{\eet}{\textup{\'et}}
\theoremstyle{plain}
\newtheorem{thmI}{Theorem}
\newtheorem{thmII}{Theorem}
\newtheorem{corI}{Corollary}
\newtheorem{corII}{Corollary}
\newtheorem{corIII}{Corollary}
\begin{document}
\title{$F$-divided sheaves trivialized by dominant maps are essentially finite}

\author{Fabio Tonini,  Lei Zhang }
\address{ Fabio Tonini\\
    Freie Universit\"at Berlin\\
    FB Mathematik und Informatik\\
    Arnimallee 3\\ Zimmer 112A\\
    14195 Berlin\\ Deutschland }
\email{tonini@math.hu-berlin.de}
 \address{ Lei Zhang\\
    Freie Universit\"at Berlin\\
    FB Mathematik und Informatik\\
    Arnimallee 3\\ Zimmer 112A\\
    14195 Berlin\\ Deutschland }
\email{l.zhang@fu-berlin.de}

\thanks{This work was supported by the European Research Council (ERC) Advanced Grant 0419744101 and the Einstein Foundation}
\date{\today}

\global\long\def\A{\mathbb{A}}

\global\long\def\Ab{(\textup{Ab})}

\global\long\def\C{\mathbb{C}}

\global\long\def\Cat{(\textup{cat})}

\global\long\def\Di#1{\textup{D}(#1)}

\global\long\def\E{\mathcal{E}}

\global\long\def\F{\mathbb{F}}

\global\long\def\GCov{G\textup{-Cov}}

\global\long\def\Gcat{(\textup{Galois cat})}

\global\long\def\Gfsets#1{#1\textup{-fsets}}

\global\long\def\Gm{\mathbb{G}_{m}}

\global\long\def\GrCov#1{\textup{D}(#1)\textup{-Cov}}

\global\long\def\Grp{(\textup{Grps})}

\global\long\def\Gsets#1{(#1\textup{-sets})}

\global\long\def\HCov{H\textup{-Cov}}

\global\long\def\MCov{\textup{D}(M)\textup{-Cov}}

\global\long\def\MHilb{M\textup{-Hilb}}

\global\long\def\N{\mathbb{N}}

\global\long\def\PGor{\textup{PGor}}

\global\long\def\PGrp{(\textup{Profinite Grp})}

\global\long\def\PP{\mathbb{P}}

\global\long\def\Pj{\mathbb{P}}

\global\long\def\Q{\mathbb{Q}}

\global\long\def\RCov#1{#1\textup{-Cov}}

\global\long\def\RR{\mathbb{R}}

\global\long\def\Sch{\textup{Sch}}

\global\long\def\WW{\textup{W}}

\global\long\def\Z{\mathbb{Z}}

\global\long\def\acts{\curvearrowright}

\global\long\def\alA{\mathscr{A}}

\global\long\def\alB{\mathscr{B}}

\global\long\def\arr{\longrightarrow}

\global\long\def\arrdi#1{\xlongrightarrow{#1}}

\global\long\def\catC{\mathscr{C}}

\global\long\def\catD{\mathscr{D}}

\global\long\def\catF{\mathscr{F}}

\global\long\def\catG{\mathscr{G}}

\global\long\def\comma{,\ }

\global\long\def\covU{\mathcal{U}}

\global\long\def\covV{\mathcal{V}}

\global\long\def\covW{\mathcal{W}}

\global\long\def\duale#1{{#1}^{\vee}}

\global\long\def\fasc#1{\widetilde{#1}}

\global\long\def\fsets{(\textup{f-sets})}

\global\long\def\iL{r\mathscr{L}}

\global\long\def\id{\textup{id}}

\global\long\def\la{\langle}

\global\long\def\odi#1{\mathcal{O}_{#1}}

\global\long\def\ra{\rangle}

\global\long\def\set{(\textup{Sets})}

\global\long\def\sets{(\textup{Sets})}

\global\long\def\shA{\mathcal{A}}

\global\long\def\shB{\mathcal{B}}

\global\long\def\shC{\mathcal{C}}

\global\long\def\shD{\mathcal{D}}

\global\long\def\shE{\mathcal{E}}

\global\long\def\shF{\mathcal{F}}

\global\long\def\shG{\mathcal{G}}

\global\long\def\shH{\mathcal{H}}

\global\long\def\shI{\mathcal{I}}

\global\long\def\shJ{\mathcal{J}}

\global\long\def\shK{\mathcal{K}}

\global\long\def\shL{\mathcal{L}}

\global\long\def\shM{\mathcal{M}}

\global\long\def\shN{\mathcal{N}}

\global\long\def\shO{\mathcal{O}}

\global\long\def\shP{\mathcal{P}}

\global\long\def\shQ{\mathcal{Q}}

\global\long\def\shR{\mathcal{R}}

\global\long\def\shS{\mathcal{S}}

\global\long\def\shT{\mathcal{T}}

\global\long\def\shU{\mathcal{U}}

\global\long\def\shV{\mathcal{V}}

\global\long\def\shW{\mathcal{W}}

\global\long\def\shX{\mathcal{X}}

\global\long\def\shY{\mathcal{Y}}

\global\long\def\shZ{\mathcal{Z}}

\global\long\def\st{\ | \ }

\global\long\def\stA{\mathcal{A}}

\global\long\def\stB{\mathcal{B}}

\global\long\def\stC{\mathcal{C}}

\global\long\def\stD{\mathcal{D}}

\global\long\def\stE{\mathcal{E}}

\global\long\def\stF{\mathcal{F}}

\global\long\def\stG{\mathcal{G}}

\global\long\def\stH{\mathcal{H}}

\global\long\def\stI{\mathcal{I}}

\global\long\def\stJ{\mathcal{J}}

\global\long\def\stK{\mathcal{K}}

\global\long\def\stL{\mathcal{L}}

\global\long\def\stM{\mathcal{M}}

\global\long\def\stN{\mathcal{N}}

\global\long\def\stO{\mathcal{O}}

\global\long\def\stP{\mathcal{P}}

\global\long\def\stQ{\mathcal{Q}}

\global\long\def\stR{\mathcal{R}}

\global\long\def\stS{\mathcal{S}}

\global\long\def\stT{\mathcal{T}}

\global\long\def\stU{\mathcal{U}}

\global\long\def\stV{\mathcal{V}}

\global\long\def\stW{\mathcal{W}}

\global\long\def\stX{\mathcal{X}}

\global\long\def\stY{\mathcal{Y}}

\global\long\def\stZ{\mathcal{Z}}

\global\long\def\then{\ \Longrightarrow\ }

\global\long\def\L{\textup{L}}

\global\long\def\l{\textup{l}}

\makeatletter 
\providecommand\@dotsep{5} 
\makeatother 

\setcounter{section}{0}
\maketitle
\begin{abstract}
By a result of Biswas and Dos Santos, on a smooth and projective variety over an algebraically closed field, a vector bundle trivialized by a proper and surjective map is essentially finite, that is it corresponds to a representation of the Nori fundamental group scheme.  In this paper we obtain similar results  for non-proper non-smooth algebraic stacks over arbitrary fields of characteristic $p>0$. As by-product we have the following partial generalization of the Biswas-Dos Santos' result in positive characteristic: on a pseudo-proper and inflexible stack of finite type over $k$ a vector bundle which is trivialized by a proper and flat map is essentially finite.
\end{abstract}

\section*{Introduction}
Let $X$ be a smooth and projective variety over an algebraically closed field $k$ with a rational point $x\in X(k)$. Then the    \'etale universal cover $\widetilde{X}_x$ of $X$ is an fpqc torsor under the \'etale fundamental group $\pi_1^{\textup{\'et}}(X,x)$ (viewing as a pro-finite group scheme over $k$). Thus any representation of $\pi_1^{\textup{\'et}}(X,x)$ on a finite dimensional $k$-vector space $V$ will give rise to a vector bundle  $\widetilde{X}_x\times^{\pi_1^{\textup{\'et}}(X,x)}V$ on $X$ by fpqc descent. This defines a functor from the category of finite dimensional $\pi_1^{\textup{\'et}}(X,x)$-representations to the category of vector bundles on $X$.  It is Lange and Stuhler who first observed, in \cite[1.2]{LS}, that a vector bundle on $X$ is in the essential image of this functor if and only if the vector bundle is trivialized by a finite \'etale cover of $X$.

A similar result holds also for the Nori fundamental group scheme, introduced in \cite{Nori}. Let $X$ be a proper, geometrically connected and geometrically reduced scheme over a field $k$ equipped with a rational point $x\in X(k)$. Nori defined the notion of \textit{essentially finite} vector bundles on $X$ and proved that they form a Tannakian category over $k$, whose associated group scheme is called the Nori fundamental group scheme of $(X,x)$.  Directly  
from Nori's definition one has that a vector bundle on $X$ is essentially finite if and only if it is trivialized by a torsor over $X$ under some finite $k$-group scheme. However, torsors under finite group schemes only correspond to Galois covers in the \'etale case. Therefore one should expect more, for instance that vector bundles trivialized by flat and finite maps are essentially finite. 
In \cite{BDS} and \cite{AM} this question has been answered for normal, projective varieties over an algebraically closed field: in this case a vector bundle is essentially finite if and only if it is trivialized by a proper and surjective morphism.

Meanwhile Nori's construction and the notion of essentially finite vector bundles has been largely extended. In \cite{BV} N. Borne and A. Vistoli introduced the notion of Nori fundamental gerbe for a fibered category $\stX$ over a field $k$ via a universal property and also an abstract notion of essentially finite object in an additive and monoidal category. The fibered categories over $k$ which are inflexible over $k$ (see \ref{inflexible}) are the ones admitting a Nori gerbe. For instance if $\stX$ is a reduced stack of finite type then it is inflexible if and only if $k$ is integrally closed in the ring $\Hl^0(\odi\stX)$ (see \ref{inflexible properties}). A fibered category $\stX$ over $k$ is pseudo-proper over $k$ if for all $E\in \Vect(\stX)$ the $k$-vector space $\Hl^0(E)$ is finite dimensional. Examples of pseudo-proper stacks are of course proper algebraic stacks over $k$ but also arbitrary affine gerbes. The previous result of Nori is generalized by showing that for a pseudo-proper and inflexible fibered category $\stX$ over $k$ the category of essentially finite vector bundles on $\stX$ is a $k$-Tannakian category and its associated gerbe is the Nori fundamental gerbe (see \cite[Theorem 7.9]{BV}). Applying this to an affine gerbe, they also concluded that, in a $k$-Tannakian category, the full subcategory of essentially finite objects is the $k$-Tannakian subcategory of objects having finite monodromy gerbe, which is the gerbe corresponding to the $k$-Tannakian subcategory generated by this object (see \cite[Corollary 7.10]{BV}).
%

Based on  \cite{BV} and \cite{EH}, we also get in \cite{TZ} a Tannakian description of the Nori fundamental gerbe for not necessarily pseudo-proper fibered categories using the language of stratified sheaves and $F$-divided sheaves. The theory applies to all reduced and inflexible algebraic stacks of finite type over $k$. In particular in positive characteristic we studied Tannakian categories of $F$-divided sheaves $\Fdiv(\stX/k)$ and $\Fdiv_{\infty}(\sX/k)$ and prove that their essentially finite objects are the representations of the Nori \'etale fundamental gerbe and Nori fundamental gerbe of $\stX/k$ respectively.

In this paper we consider Lange-Stuhler or Biswas-Dos Santos style theorem in the non pseudo-proper settings. In what follows we assume that the ground field $k$ has positive characteristic. An object in a additive monoidal category is called trivial or free if it is isomorphic to a finite direct sum of copies of the unit object. Given a map of algebraic stacks $f\colon \stU\arr \stX$ we denote by $\FDiv(\stX/k)_f$ the full subcategory of $\FDiv(\stX/k)$ of objects trivialized by $f$, that is whose pullback along $f$ is free in $\FDiv(\stU/k)$. When $\stX$ is reduced and inflexible we denote by $\FDiv_\infty(\stX/k)_f$ the sub-Tannakian category of $\FDiv_\infty(\stX/k)$ generated by the objects trivialized by $f$. Here are our main results:

\begin{thmI}\label{thmI}
Let $\sX$ be a geometrically connected (resp. reduced and inflexible) algebraic stack of finite type over $k$. If $\E\in \FDiv(\stX/k)$ (resp. $\E\in \Fdiv_\infty(\stX/k)$) is an essentially finite object then there exists a surjective finite and \'etale (resp. finite and flat)  map $f\colon \stU\arr\stX$ trivializing $\E$. Conversely let $f\colon \stU\arr\stX$ be a flat and surjective map of algebraic  stacks of finite type over $k$. Then $\Fdiv(\stX/k)_f$ (resp. $\FDiv_\infty(\stX/k)_f$) is a Tannakian subcategory and, if one of the conditions below is satisfied, its associated gerbe is finite and \'etale (resp. profinite):
\begin{enumerate}
\item the map $f$ is proper;
\item the map $f$ is geometrically connected, in which case $\FDiv(\stX/k)_f=\Vect(k)$;
\item the stack $\sU$ is geometrically irreducible.
\end{enumerate}
\end{thmI}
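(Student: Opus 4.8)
\emph{First direction.} Assume $\E\in\FDiv(\stX/k)$ (resp.\ $\E\in\Fdiv_\infty(\stX/k)$) is essentially finite. By the abstract theory of \cite{BV}, the Tannakian subcategory $\la\E\ra^{\otimes}$ it generates has a \emph{finite} monodromy gerbe; by the comparison results of \cite{TZ} recalled above this gerbe is a quotient of the Nori \'etale gerbe (resp.\ of the Nori gerbe) of $\stX/k$, hence it is finite \'etale (resp.\ finite). A finite \'etale (resp.\ finite) gerbe over $k$ is, by Nori's mechanism, the Nori gerbe of a torsor $f\colon\stU\to\stX$ under a finite \'etale (resp.\ finite) $k$-group scheme, and for such a torsor $f^*\E$ is an associated bundle pulled back to the torsor, hence free in $\FDiv(\stU/k)$ (resp.\ $\Fdiv_\infty(\stU/k)$). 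As $f$ is surjective and finite \'etale (resp.\ finite flat), this proves the first assertion.

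\emph{Converse, step 1: the subcategory is Tannakian.} For $\Fdiv_\infty(\stX/k)_f$ there is nothing to prove. For $\FDiv(\stX/k)_f$ observe that $f^*\colon\FDiv(\stX/k)\to\FDiv(\stU/k)$ is exact, $k$-linear, symmetric monoidal and, $f$ being faithfully flat, faithful; so it suffices to check that ``$f^*(-)$ is free'' is stable under duals, tensor products and subquotients. The first two are clear. For the last, use that $\stX$ is connected, hence every object of $\FDiv(\stX/k)$ has constant rank: if $f^*\E$ is free and $\E'$ is a subquotient of $\E$ of rank $r'$, then on each connected component of $\stU$ the object $f^*\E'$ is a rank $r'$ subquotient of a free object, therefore free of rank $r'$, so $f^*\E'$ is free. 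Thus $\FDiv(\stX/k)_f$ is a Tannakian subcategory; by construction its gerbe is the quotient of the $F$-divided fundamental gerbe of $\stX/k$ by the normal closure of the image of that of $\stU/k$, and likewise for $\Fdiv_\infty$.

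\emph{Converse, step 2: finiteness of the gerbe.} The plan is to reduce each of (1)--(3) to the case where $f$ itself is finite \'etale (resp.\ finite flat). Case (2) is the basic input: I would prove that a geometrically connected flat surjective $f$ trivialises only free objects of $\FDiv(\stX/k)$ by showing $f^*$ is fully faithful with image closed under subobjects; full faithfulness reduces, via internal $\Homsh$, to $\Hl^0(\stU,f^*\shG)=\Hl^0(\stX,\shG)$ for $\shG\in\FDiv(\stX/k)$, which follows from the projection formula $f_*f^*\shG_n\cong\shG_n\otimes f_*\odi\stU\cong\shG_n$ together with compatibility with the $F$-divided structure, while stability under subobjects is faithfully flat descent along $f$, permissible because $\stU\times_\stX\stU\to\stX$ is again geometrically connected. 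For (1) and (3) one factors $f$, after discarding a purely inseparable part to which $\FDiv$ is insensitive, as $\stU\to\stX'\xrightarrow{g}\stX$ with $g$ finite \'etale (resp.\ finite flat) and $\stU\to\stX'$ geometrically connected: under (3) one takes for $g$ the cover attached to the separable algebraic closure of $k(\stX)$ in $k(\stU)$, a finite extension since $k(\stU)$ is finitely generated over $k(\stX)$, through which $f$ factors by the theory of trivialisation by dominant maps; and (1) is brought into this situation via the Stein factorisation together with passage to an irreducible component of $\stU$ dominating $\stX$, whose image is closed, by properness, and dense, hence all of $\stX$. By case (2), $\FDiv(\stX/k)_f=\FDiv(\stX/k)_g$. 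Finally, for finite \'etale $g$ the image of the $F$-divided fundamental gerbe of $\stX'/k$ in that of $\stX/k$ has finite index, so the gerbe of $\FDiv(\stX/k)_g$ is finite, and by \cite{TZ} a finite quotient of the $F$-divided fundamental gerbe is \'etale; in the $\Fdiv_\infty$ case one combines the same index bound with the stack-theoretic version of the Biswas--Dos Santos theorem to conclude that every object of $\Fdiv_\infty(\stX/k)_g$ is essentially finite, whence the gerbe is profinite.

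\emph{Main obstacle.} The crux is the stack-theoretic, non-proper, positive-characteristic form of Biswas--Dos Santos: an $F$-divided sheaf trivialised by a proper surjective map is essentially finite. The boundedness argument of \cite{BDS}, \cite{AM} has to be carried out without properness, smoothness or reducedness of $\stX$ and over a non-closed field, with the $F$-divided structure providing the rigidity that projectivity of the base and algebraic closedness of $k$ supplied there. Secondary technical points are the descent argument underlying case (2) and, for cases (1) and (3), controlling $k(\stU)$ over $k(\stX)$ and extending the resulting factorisation over all of $\stX$ — i.e.\ the foundations of trivialisation by dominant maps.
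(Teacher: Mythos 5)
Your first direction and Step 1 of the converse track the paper's argument. Two caveats: a finite \'etale gerbe over a non-closed $k$ need not be neutral, so the cover is obtained as $\stU=\stX\times_\Gamma\Spec L$ for a finite separable $L/k$ rather than as a torsor under a $k$-group scheme; more importantly, your assertion that $f^*\E$ is free \emph{in $\FDiv(\stU/k)$} (not merely that its underlying bundle is free) is exactly the point that needs Proposition \ref{voucher for finite stacks}, i.e.\ $\FDiv(\Gamma/k)\simeq\Vect(\Gamma_\et)$: one must first lift $V\in\Vect(\Gamma)$ to an $F$-divided sheaf on $\Gamma$ pulling back to $\E$, and only then restrict along $\Spec L\arr\Gamma$. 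You assert this without justification, and it is the only nontrivial content of that direction.

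Step 2 of the converse is where the proposal genuinely fails. (a) In case (2) your projection-formula step $f_*f^*\shG_n\cong\shG_n\otimes f_*\odi\stU\cong\shG_n$ is false for a general geometrically connected flat surjective map of finite type (take $\stU=\A^1\times_k\stX\arr\stX$, where $f_*\odi\stU\neq\odi\stX$); the full faithfulness of $f^*$ on $\FDiv$ is not established this way. (b) Your reduction of (1) and (3) to a finite \'etale $g$ via Stein factorization and the separable closure of $k(\stX)$ in $k(\stU)$ presupposes $\stX$ irreducible and essentially normal; the Stein factor of a proper map over a non-normal base need not be flat or \'etale, $k(\stX)$ need not make sense, and a cover defined on a dense open need not extend to $\stX$ when $\stX$ is not geometrically unibranch (the paper's Example \ref{not unibranch}); invoking ``trivialisation by dominant maps'' smuggles in the unibranch hypothesis of Theorem \ref{thmII}, which is not available in Theorem \ref{thmI}. (c) Most seriously, you defer both the finiteness of the gerbe and the whole $\FDiv_\infty$ case to a ``stack-theoretic Biswas--Dos Santos theorem'' and name its boundedness argument as an unresolved obstacle; but in this paper that statement is Corollary \ref{corI}, which is \emph{deduced from} Theorem \ref{thmI}, so your plan is circular. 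The idea you are missing is the paper's descent computation: after a finite base change making every connected component of $\stU$, $\stU\times_\stX\stU$ and $\stU\times_\stX\stU\times_\stX\stU$ geometrically connected, the fact that $\FDiv$ is an fpqc stack and that $\End_{\FDiv}(\mathds{1})=k$ on each such component (Theorem \ref{thm for Fdiv}) turns a descent datum on a free object into pure combinatorics, giving $\FDiv(\stX/k)_f\simeq\Rep_k(G)$ with $G=\la e_Z,\ Z\in I\st e_{Z_{13,D}}^{-1}e_{Z_{23,D}}e_{Z_{12,D}},\ D\in J\ra$; hypotheses (1)--(3) enter only to show $\pr_1(Z)\cap\pr_2(W)\neq\emptyset$ for all $Z,W\in I$, whence $|G|\le|I|<\infty$ and the gerbe is the finite constant (hence \'etale) one attached to $G$.
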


In this generality we are unable to prove the result for arbitrary flat and surjective maps $f$, although we believe this should be true. On the other hand adding some more regularity we can even drop the flatness hypothesis:

\begin{thmII}\label{thmII}
Let $\sX$ be a geometrically unibranch (e.g. normal, see Section \ref{Unibranch}) algebraic stack of finite type over $k$. If $\stX$ is geometrically connected (resp. reduced and inflexible) then an object in $\Fdiv(\stX/k)$ (resp. $\Fdiv_{\infty}(\sX/k)$)  is essentially finite if and only if it is trivialized by a dominant morphism of finite type from an algebraic stack.
\end{thmII}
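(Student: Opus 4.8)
The implication ``essentially finite $\Rightarrow$ trivialized by a dominant morphism of finite type'' is immediate from Theorem I: an essentially finite object of $\Fdiv(\stX/k)$ (resp.\ $\Fdiv_\infty(\stX/k)$) is trivialized by a surjective finite and \'etale (resp.\ finite and flat) morphism, and such a morphism is dominant of finite type. The content is the converse, which I would reduce to the proper, flat case (1) of Theorem I. \textbf{Step 1: reduce to $\stX$ normal and integral.} Since $\stX$ is geometrically unibranch, the morphism $\stX'\to\stX_{\red}\to\stX$, with $\stX'$ the normalization of $\stX_{\red}$, is a universal homeomorphism (Section \ref{Unibranch}). Pullback along a universal homeomorphism induces an equivalence $\Fdiv(\stX/k)\simeq\Fdiv(\stX'/k)$ (resp.\ $\Fdiv_\infty$), compatibly with essential finiteness, and base change along it preserves dominant morphisms of finite type as well as triviality of the pullback of $\E$. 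Finally, a normal geometrically connected (resp.\ normal, reduced and inflexible) algebraic stack of finite type over $k$ is integral — and, in the geometrically connected case, even geometrically irreducible. Hence we may assume $\stX$ is normal and integral.

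\textbf{Step 2: upgrade the trivializing map to a proper surjective one.} Replacing $\stU$ by a finite type affine atlas we may assume $\stU=V$ is an affine scheme of finite type over $k$ with $f\colon V\to\stX$ dominant and $\E|_V$ trivial. Choose an open immersion $V\hookrightarrow\overline V$ into a proper $k$-scheme, let $Z\subseteq\overline V\times_k\stX$ be the closure of the graph of $f$, and let $\stW$ be the normalization of $Z$. Then $g\colon\stW\to\stX$ is proper (it factors through the proper projection $\overline V\times_k\stX\to\stX$) and surjective (proper with dense image, as $f$ is dominant), and $\stW$ contains the normalization $V'$ of $V$ as a dense open substack, on which $\E$ is trivial. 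The crucial input is that \emph{restriction to a dense open is a fully faithful functor on $F$-divided sheaves over a normal stack}: for a morphism between $F$-divided sheaves on a normal stack, its extension as a rational morphism across a prime divisor $D$ cannot have a pole, since expressing it as an $n$-fold relative Frobenius pullback would force the pole order along $D$ to be divisible by $p^{n}$ for every $n$ (the codimension $\ge 2$ part of the complement being harmless by normality). In particular an $F$-divided sheaf trivial on a dense open of a normal stack is trivial; applied to $V'\subseteq\stW$ this gives that $\E|_{\stW}$ is trivial, i.e.\ $\E$ is trivialized by the proper surjective morphism of finite type $g$.

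\textbf{Step 3: apply Theorem I on a dense open and descend.} By generic flatness there is a dense open $\stV\subseteq\stX$ over which $g_{\stV}\colon\stW\times_\stX\stV\to\stV$ is flat; it remains proper, surjective and of finite type. The open $\stV$ is again normal and integral, and geometrically connected (resp.\ reduced and inflexible — here one uses that, $\stX$ being normal, a finite field extension of $k$ contained in $\Hl^0(\odi\stV)\subseteq k(\stX)$ is integral over $\Hl^0(\odi\stX)$, hence already lies in $\Hl^0(\odi\stX)$, hence in $k$). By case (1) of Theorem I, $\Fdiv(\stV/k)_{g_{\stV}}$ (resp.\ $\Fdiv_\infty(\stV/k)_{g_{\stV}}$) is a Tannakian subcategory whose gerbe is finite and \'etale (resp.\ profinite); in particular $\E|_{\stV}$, which lies in it, is essentially finite, so the Tannakian subcategory $\langle\E|_{\stV}\rangle$ it generates corresponds to a finite group scheme $G_{\stV}$. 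Now let $\langle\E\rangle\subseteq\Fdiv(\stX/k)$ (resp.\ $\Fdiv_\infty(\stX/k)$) be the Tannakian subcategory generated by $\E$, with group scheme $G$. Restriction is an exact tensor functor $\langle\E\rangle\to\langle\E|_{\stV}\rangle$ which is fully faithful by Step 2 and whose essential image generates the target; hence the induced homomorphism $G_{\stV}\to G$ is both faithfully flat and a monomorphism, so an isomorphism. Therefore $G\cong G_{\stV}$ is finite and $\E$ is essentially finite.

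The main obstacle is Step 2: making precise, and stack-theoretic, the two ``topological'' features of $\Fdiv$ used here — invariance under universal homeomorphisms (Step 1) and full faithfulness of restriction to a dense open of a normal base — after which Theorem II is a Tannakian comparison fed by the proper, flat case of Theorem I. Both features are either available in \cite{TZ} or provable by the elementary Frobenius pole-order argument indicated above; the compactification trick of Step 2 is what lets one invoke the \emph{proper} case of Theorem I rather than needing an unavailable statement for arbitrary flat surjective maps.
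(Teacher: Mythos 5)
Your architecture — compactify the graph of an atlas of $\stU$, normalize, and reduce via generic flatness to the proper flat case (1) of Theorem \ref{thmI} — is a legitimate alternative to the paper's route, which instead uses that a flat finite-type map is open and invokes case (3) of Theorem \ref{thmI} on the open image of a geometrically irreducible piece of the source. However, there is a genuine gap at the decisive point, Step 3. A fully faithful exact tensor functor whose essential image generates the target does \emph{not} induce an isomorphism of Tannakian group schemes: ``image generates'' gives that $G_{\stV}\arr G$ is a closed immersion, but to get faithful flatness — which is what you actually need, since you want $G$ to be a \emph{quotient} of the finite $G_{\stV}$ — you must also know that the essential image of $\langle\E\rangle\arr\langle\E|_{\stV}\rangle$ is stable under subobjects (Deligne--Milne 2.21). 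Full faithfulness alone is insufficient: $\Rep(\GL_2)\arr\Rep(B)$, for $B$ a Borel in characteristic $0$, is fully faithful with generating image, yet $B\neq\GL_2$. This is precisely why the paper's Theorem \ref{dense open for unibranch} proves, besides full faithfulness, that restriction to a dense open is \emph{stable under sub-objects}; that is the harder half of its proof (the $j_*$-and-intersect construction together with $\FDiv(R/k)\simeq\Vect(R_\et)$ for complete local rings). Your Frobenius pole-order argument only delivers full faithfulness, so the conclusion ``$G\cong G_{\stV}$, hence $G$ finite'' does not follow as written.

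A second, smaller gap is Step 1: invariance of $\FDiv$, and especially of $\FDiv_\infty$, under the universal homeomorphism $\stX'\arr\stX$ given by normalization is asserted but not available in the paper (which only records invariance under nilpotent thickenings). For $\FDiv$ one can hope to dominate the normalization by a power of Frobenius, but for $\FDiv_\infty$ the claim is genuinely doubtful: an object of $\FDiv_i$ remembers an honest vector bundle $\shF$ on $\stX$ itself, and $\Vect(\stX)\arr\Vect(\stX')$ is not an equivalence along a universal homeomorphism (already for the cuspidal cubic). The paper avoids both issues by first reducing the $\FDiv_\infty$ statement to the $\FDiv$ one via Theorem \ref{thm for Fdiv} (essential finiteness of $(\shF,\shG,\lambda)$ is detected on $\shG$), and by proving the dense-open theorem directly for geometrically unibranch stacks, with no normalization at all.
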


By lifting vector bundles to objects of $\FDiv_\infty(\stX/k)$ we are able to deduce from Theorem \ref{thmI} this partial generalization of \cite{BDS} and \cite{AM}:

\begin{corI}\label{corI}
 Let $\stX$ be a pseudo-proper and inflexible algebraic stack $\stX$ of finite type over $k$. Then a vector bundle on $\stX$ is essentially finite if and only if it is trivialized by a proper and flat map $f\colon \stU\arr\stX$ from an algebraic stack such that $\Hl^0(\odi{\stU\times_\stX \stU})$ is a finite $k$-algebra.
\end{corI}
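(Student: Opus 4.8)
The plan is to deduce this from Theorem I by passing from vector bundles to $F$-divided sheaves. First I would recall the lifting mechanism from \cite{TZ}: on a reduced and inflexible algebraic stack $\stX$ of finite type over $k$ there is a "forgetful" tensor functor $\FDiv_\infty(\stX/k)\to\Vect(\stX)$, and since we are in characteristic $p$ a vector bundle $E$ on $\stX$ lifts to an object $\widehat{E}\in\FDiv_\infty(\stX/k)$ precisely when $E$ is essentially finite (this is the content of the Tannakian description of the Nori gerbe by $F$-divided sheaves). In fact, for a pseudo-proper inflexible $\stX$, Borne–Vistoli's \cite[Theorem 7.9]{BV} identifies the category of essentially finite vector bundles with $\Rep$ of the Nori gerbe, and the latter is a full Tannakian subcategory of $\FDiv_\infty(\stX/k)$; so an essentially finite $E$ comes with a canonical lift $\widehat{E}$ whose underlying bundle is $E$, and this lift itself is an essentially finite object of $\FDiv_\infty(\stX/k)$.

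Next I would handle the two directions. For the "only if" direction: suppose $E$ is essentially finite. Then $E$ is trivialized by a torsor $f\colon\stU\to\stX$ under a finite $k$-group scheme $G$ (directly from Nori's definition, as recalled in the introduction); such an $f$ is proper and flat, and $\stU\times_\stX\stU\cong\stU\times G$, so $\Hl^0(\odi{\stU\times_\stX\stU})=\Hl^0(\odi\stU)\otimes_k\Hl^0(\odi G)$, which is finite over $k$ since $G$ is finite and $\stU$ is pseudo-proper (being proper over the pseudo-proper $\stX$). For the "if" direction: suppose $E$ is trivialized by a proper flat $f\colon\stU\to\stX$ with $\Hl^0(\odi{\stU\times_\stX\stU})$ finite over $k$. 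The idea is that the lift $\widehat{E}\in\FDiv_\infty(\stX/k)$ is then trivialized by $f$ as well — the pullback $f^*E$ is free in $\Vect(\stU)$, hence (being a pullback of an $F$-divided sheaf along $f$) it is free in $\FDiv_\infty(\stU/k)$ once one checks the $F$-divided structure is the trivial one, which follows because a free bundle has a unique $F$-divided structure up to the relevant rigidity when $\Hl^0(\odi{\stU\times_\stX\stU})$ controls the obstruction. Thus $\widehat{E}\in\FDiv_\infty(\stX/k)_f$. By Theorem I (using hypothesis (1), properness of $f$), $\FDiv_\infty(\stX/k)_f$ is a Tannakian subcategory with profinite associated gerbe, so every object in it — in particular $\widehat{E}$ — is essentially finite in $\FDiv_\infty(\stX/k)$; pushing down to $\Vect(\stX)$, its underlying bundle $E$ is essentially finite.

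I would organize the write-up as: (i) reduce to the case $\stX$ reduced — one replaces $\stX$ by $\stX_{\red}$, noting that $\Vect(\stX)\to\Vect(\stX_{\red})$ is an equivalence of Tannakian categories reflecting essential finiteness, and $\stX_{\red}$ is pseudo-proper and inflexible; the map $f$ pulls back to $\stU_{\red}\to\stX_{\red}$, still proper and flat, and $\Hl^0(\odi{\stU_{\red}\times_{\stX_{\red}}\stU_{\red}})$ is a quotient of a finite $k$-algebra, hence finite; (ii) set up the lifting functor and the fact that its essential image consists of essentially finite bundles; (iii) run the two directions above, invoking Theorem I(1) for the harder direction.

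The main obstacle I expect is step (ii)–(iii) of the "if" direction: namely verifying carefully that trivialization of the vector bundle $E$ by $f$ actually propagates to trivialization of the $F$-divided lift $\widehat{E}$ by $f$ in $\FDiv_\infty(\stX/k)_f$ — i.e. that $f^*\widehat{E}$ is not merely free as a bundle but free as an $F$-divided sheaf on $\stU$. This is exactly where the hypothesis "$\Hl^0(\odi{\stU\times_\stX\stU})$ is a finite $k$-algebra" must be used: it is the condition that makes $\FDiv(\stU/k)_{\id}$-type rigidity work, ensuring that the $F$-divided structure on a free bundle pulled back from $\stX$ is forced to be the standard one, so that no exotic $F$-divided structure can obstruct the trivialization. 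Once that subtlety is dispatched, the rest is a formal application of Theorem I together with the Tannakian dictionary between the Nori gerbe and $\FDiv_\infty$.
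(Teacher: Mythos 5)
Your ``if'' direction is circular. Every step of it runs through ``the lift $\widehat E\in\FDiv_\infty(\stX/k)$'' of the given bundle $E$, but by your own setup such a lift exists precisely when $E$ is essentially finite --- which is the conclusion you are trying to reach. Nowhere do you produce $\widehat E$ from the hypothesis that $f^*E$ is free; you only discuss why, \emph{granting} $\widehat E$, its pullback along $f$ would be free as an $F$-divided sheaf. So the ``main obstacle'' you flag is not the real one: the real problem is constructing the lift at all.

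The missing idea, which is the actual content of the paper's proof, is to build the lift by fppf descent along $f$ itself. The free bundle $f^*E$ on $\stU$ underlies the tautological free object of $\FDiv_i(\stU/k)$, and the descent datum for $E$ (an automorphism of a free bundle on $\stU\times_\stX\stU$) lifts to a descent datum in $\FDiv_i$ for $i\gg 0$: when $\Hl^0(\odi{\stU\times_\stX\stU})$ is a finite $k$-algebra, $\End_{\FDiv_i}(\mathds{1})=\{x\in \Hl^0(\odi{\stU\times_\stX\stU})\st x^{p^i}\in \Hl^0(\odi{\stU\times_\stX\stU})_\et\}$ equals all of $\Hl^0(\odi{\stU\times_\stX\stU})$ for $i$ large, so $\FDiv_i\arr\Vect$ is fully faithful on free objects there. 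That is where the finiteness hypothesis enters --- in constructing the lift, not in rigidifying one you already have. Descent in the fppf stack $\FDiv_i$ then yields $(V,W,\lambda)\in\FDiv_i(\stX/k)$ trivialized by $f$; Theorem \ref{thmI}(1) makes $W$ essentially finite in $\FDiv(\stX/k)$, hence $W_0\simeq F^{i*}V$ is an essentially finite bundle, and one still needs Lemma \ref{ess finite descends along Frobenius} to pass from $F^{i*}V$ to $V$ --- a step absent from your outline (and which also lets the paper avoid the reducedness of $\stX$ that your $\FDiv_\infty$ route would require). Your ``only if'' half is essentially fine, though the paper phrases it via the finite monodromy gerbe and a finite extension $L/k$ rather than a torsor, and gets the finiteness of $\Hl^0(\odi{\stU\times_\stX\stU})$ simply because $\stU\times_\stX\stU\arr\stX$ is finite flat and $\stX$ is pseudo-proper.
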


Essentially we drop the smothness assumption but we require the trivialization to be flat. The finiteness condition is needed in this generality because $\stU$ or $\stU\times_\stX\stU$ may not be pseudo-proper. For instance asking that the space of global sections of all coherent sheaves on $\stX$ are finite dimensional solve this issue.

Another application of the theory and, more precisely, of Corollary \ref{corI} is the following. In \cite{BV2} Borne and Vistoli introduce the virtually unipotent fundamental gerbe of a fibred category $\stX$ over $k$, denoted by $\stX\arr \Pi^{\textup{VU}}_{\stX/k}$: it is defined as a pro virtually unipotent affine gerbe such that any map $\stX\arr \Gamma$ to a virtually unipotent gerbe factors uniquely through $\Pi^{\textup{VU}}_{\stX/k}$ (see \ref{virtually unipotent} and \ref{virtually unipotent fundamental gerbe}). If $k$ has positive characteristic, and if $\stX$ is pseudo-proper, geometrically reduced, geometrically connected over $k$ and has an atlas from a reduced Noetherian scheme, then the representations of $\Pi^{\textup{VU}}_{\stX/k}$ have a Tannakian interpretation: they correspond, up to Frobenius, to subsequent extensions of essentially finite vector bundles (see \cite[Section 10, Theorem 10.7]{BV2}). We deduce the following:

\begin{corII}\label{corII}
Let $\stX$ be a pseudo-proper, geometrically reduced and geometrically connected algebraic stack of finite type over a field of positive characteristic $k$ such that $\dim_k \Hl^1(\stX,E)<\infty$ for all vector bundles $E$ on $\stX$. Then the virtually unipotent gerbe $\Pi^{\textup{VU}}_{\stX/k}$ and the Nori fundamental gerbe $\Pi^\NN_{\stX/k}$ concide.
\end{corII}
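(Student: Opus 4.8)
The plan is to reduce the statement, via the Tannakian dictionary and \cite[Section 10, Theorem 10.7]{BV2}, to a statement about subsequent extensions in $\FDiv_\infty(\stX/k)$, and then to kill such an extension by a finite \'etale cover of a well chosen trivialization; the finiteness hypothesis on $\Hl^1$ will be used exactly once, through Artin--Schreier theory. Since the Nori gerbe, the virtually unipotent gerbe, pseudo-properness, the finiteness of the $\Hl^i$ of vector bundles and the property of being essentially finite are all compatible with the base change $k\to\overline k$ for $\stX$ geometrically reduced and geometrically connected, I may assume $k=\overline k$. Every finite group scheme is virtually unipotent, so $\Pi^\NN_{\stX/k}$ is pro--virtually unipotent and the universal property of $\Pi^{\textup{VU}}_{\stX/k}$ produces a canonical faithfully flat morphism $q\colon\Pi^{\textup{VU}}_{\stX/k}\arr\Pi^\NN_{\stX/k}$ over $\stX$; since $\Pi^\NN_{\stX/k}$ is pro--finite, every finite quotient gerbe of $\Pi^{\textup{VU}}_{\stX/k}$ factors through it, so $q$ is an isomorphism precisely when every representation of $\Pi^{\textup{VU}}_{\stX/k}$ has finite monodromy gerbe, i.e.\ is essentially finite (\cite[Corollary 7.10]{BV}). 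By \cite[Section 10, Theorem 10.7]{BV2} (whose hypotheses hold here) together with the description of $\Pi^\NN_{\stX/k}$ in \cite{TZ}, the representations of $\Pi^{\textup{VU}}_{\stX/k}$ correspond to the objects of $\FDiv_\infty(\stX/k)$ that are subsequent extensions of essentially finite objects; hence, inducting on the length of the filtration, it suffices to show: if $0\arr\E'\arr\E\arr\E''\arr 0$ is exact in $\FDiv_\infty(\stX/k)$ with $\E'$ and $\E''$ essentially finite, then $\E$ is essentially finite.

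Note that $\stX$ is inflexible, as $\Hl^0(\odi\stX)=k$. By Theorem \ref{thmI} there are finite, flat and surjective maps $\stU'\arr\stX$ and $\stU''\arr\stX$ trivializing $\E'$ and $\E''$, and then $f\colon\stU:=\stU'\times_\stX\stU''\arr\stX$ is finite, flat and surjective and trivializes both. As $f$ is finite and flat, $f_*\odi\stU$ is a vector bundle on $\stX$, so $\Hl^1(\stU,\odi\stU)=\Hl^1(\stX,f_*\odi\stU)$ is finite dimensional over $k$. Over $\stU$ the pulled-back extension is an extension of a free object by a free object, hence is classified by finitely many elements of $\Ext^1_{\FDiv_\infty(\stU/k)}(\mathbf 1,\mathbf 1)$, which is the projective limit of the system $\bigl(\Hl^1(\stU,\odi\stU),F^{*}\bigr)$, where $F^{*}$ denotes the ($p$-semilinear) Frobenius pullback. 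Because $\Hl^1(\stU,\odi\stU)$ is finite dimensional, the descending chain $(F^{*})^{n}\Hl^1(\stU,\odi\stU)$ stabilizes to a subspace $W$ on which $F^{*}$ is bijective; by Lang's theorem $W=W^{F^{*}=1}\otimes_{\F_p}\overline k$ with $d:=\dim_{\F_p}W^{F^{*}=1}<\infty$, and by the Artin--Schreier sequence every class in $W^{F^{*}=1}$ is the image of the class of a $\Z/p$-torsor over $\stU$. Each compatible system $(\eta_i)_{i\ge 0}$ classifying our extension satisfies $\eta_i=\sum_{m=1}^{d}\lambda_m^{1/p^{i}}e_m$ for fixed $e_m\in W^{F^{*}=1}$ and $\lambda_m\in\overline k$; so if $\stW\arr\stU$ is the fibre product of the $\Z/p$-torsors attached to a fixed $\F_p$-basis $e_1,\dots,e_d$ of $W^{F^{*}=1}$ — a finite \'etale cover — then each $e_m$, hence (by $k$-linearity of pullback) each $\eta_i$, pulls back to zero on $\stW$, so $\E|_\stW$ is free in $\FDiv_\infty(\stW/k)$.

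Let $g\colon\stW\arr\stX$ be the composite, which is finite --- hence proper --- flat and surjective, and which trivializes $\E$. By Theorem \ref{thmI}(1) the category $\FDiv_\infty(\stX/k)_g$ is a Tannakian subcategory with profinite associated gerbe, and it contains $\E$; since a representation of a profinite gerbe has finite monodromy gerbe, $\E$ is essentially finite, which settles the inductive step and hence the theorem.

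I expect the real obstacle to be the first reduction: transferring \cite[Section 10, Theorem 10.7]{BV2} into the language of $\FDiv_\infty(\stX/k)$ --- in particular pinning down the ``up to Frobenius'' there and the identification of the essentially finite objects of $\FDiv_\infty(\stX/k)$ with the representations of $\Pi^\NN_{\stX/k}$ --- together with the Artin--Schreier step, which is the only place where $\dim_k\Hl^1(\stX,E)<\infty$ is genuinely used and which forces the Frobenius-divided extension classes over $\stU$ to be of \'etale (unit-root) origin.
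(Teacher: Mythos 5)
Your overall architecture is the same as the paper's: trivialize the essentially finite graded pieces by a finite flat cover, use $\dim_k\Hl^1<\infty$ together with Frobenius to kill the resulting extension classes by a further finite cover, and conclude via Theorem \ref{thmI}/Corollary \ref{corI}. But the order in which you do things creates a genuine gap, one you have correctly located but underestimated. Theorem \ref{Tannakian virtually unipotent} identifies $\Vect(\Pi^{\textup{VU}}_{\stX/k})$ with the virtually unipotent \emph{vector bundles} on $\stX$, i.e.\ bundles $E$ such that $F^{m*}E$ is an iterated extension of essentially finite bundles; your reduction replaces these by iterated extensions \emph{inside} $\FDiv_\infty(\stX/k)$. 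That replacement is not available: an extension $0\to E'\to E\to E''\to 0$ of essentially finite vector bundles lifts to an extension in $\FDiv_\infty(\stX/k)$ only when its class is infinitely divisible under $F^{*}$, i.e.\ lies in the unit-root (stable-image) part of the relevant $\Hl^1$. Classes in the Frobenius-nilpotent part genuinely occur (e.g.\ extensions of $\odi\stX$ by $\odi\stX$ killed by $F^{*}$, which are $\alpha_p$-type phenomena and are essentially finite only because of Lemma \ref{ess finite descends along Frobenius}); for those your lift does not exist, and, consistently, your Artin--Schreier step produces only \'etale $\Z/p$-covers, which cannot trivialize them. So restricting the extension classes to the unit-root part $W$ is not a harmless simplification: it is exactly where the non-\'etale part of the problem is silently discarded.

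The paper avoids this by reversing the order of operations. It first reduces to $E$ extended essentially finite using Lemma \ref{ess finite descends along Frobenius}, then trivializes the graded pieces by pulling back a rational point of the finite gerbe they come from, then kills the resulting classes in the \emph{ordinary} $\Hl^1(\odi{\stY})$ by Lemma \ref{additive torsors}: finite dimensionality forces some nonzero additive polynomial $R(F)$ to annihilate each class, so the class is trivialized by a torsor under the finite group scheme $\Ker(R(F))\subseteq\G_a$ --- which is \'etale only when the class is unit-root --- and only at the very end, once $E$ is trivialized by a finite flat map, is it lifted to $\FDiv_i$ via Corollary \ref{corI}. To repair your argument you would either have to prove the lifting statement you assume (which amounts to redoing the Fitting decomposition of $\Hl^1$ under $F^{*}$ and invoking Lemma \ref{ess finite descends along Frobenius} to dispose of the nilpotent part), or simply work with vector bundles throughout and replace your $\Z/p$-covers by torsors under kernels of additive polynomials, as the paper does. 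The initial base change to $\overline k$ is also not free (the paper never performs it, and Proposition \ref{essentially finite and base change} only covers finite extensions), but that is a secondary issue.
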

The above result holds only in positive characteristic. Corollary \ref{corII} fails for a stack like $\sB_k \G_a$, so that the conditions on $\Hl^1$ is also necessary: it ensures that all $\G_a$-torsors of $\stX$ and of its covers comes from a finite subgroup of $\G_a$ (see \ref{additive torsors}).  

Finally as application of Theorem \ref{thmII} we prove the following:

\begin{corIII}\label{corIII}
Let $\Gamma$ be a gerbe of finite type over $k$. Then all objects of $\FDiv(\Gamma/k)$ and $\FDiv_\infty(\Gamma/k)$ are essentially finite. In particular
\[
\FDiv(\Gamma/k)\simeq \Vect(\Gamma_\et) \text{ and } \Fdiv_\infty(\Gamma/k) \simeq \Vect(\hat\Gamma)
\]
where $\Gamma_\et$ and $\hat\Gamma$ are the pro\'etale and profinite quotients respectively.
\end{corIII}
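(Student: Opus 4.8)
The plan is to deduce the statement from Theorem~\ref{thmII}, and it is convenient to first reduce to the case $k=\bar k$. An object of $\FDiv(\Gamma/k)$ (resp.\ of $\FDiv_\infty(\Gamma/k)$) is essentially finite precisely when its monodromy gerbe is finite, and the monodromy gerbe of its pullback to $\FDiv(\Gamma_{\bar k}/\bar k)$ (resp.\ $\FDiv_\infty(\Gamma_{\bar k}/\bar k)$) is the base change to $\bar k$ of the original one; since finiteness of a gerbe over a field is unaffected by extension of scalars, an object is essentially finite over $k$ if and only if its pullback is essentially finite over $\bar k$. Hence it suffices to prove that every object of $\FDiv(\Gamma/k)$ and of $\FDiv_\infty(\Gamma/k)$ is essentially finite when $k$ is algebraically closed; this reduction also removes the nuisances that $F$-divided sheaves can present over imperfect fields.

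Assume then $k=\bar k$, so that $\Gamma\simeq B G$ for a group scheme $G$ of finite type over $k$. The stack $\Gamma$ satisfies the hypotheses of Theorem~\ref{thmII}: it is geometrically connected, being a gerbe over the algebraically closed field $k$; it is geometrically unibranch and reduced, because by the results of Section~\ref{Unibranch} (concretely, the atlas $\GL_n/G\arr BG$ arising from a closed embedding $G\hookrightarrow\GL_n$) it admits a smooth atlas, which is regular over $k$ and hence normal and reduced; and it is inflexible, since $\Hl^0(\odi{BG})=k$ and $k$ is trivially integrally closed in it (cf.~\ref{inflexible properties}). Thus $\FDiv(\Gamma/k)$ and $\FDiv_\infty(\Gamma/k)$ are defined and Theorem~\ref{thmII} is applicable.

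Now let $x\colon\Spec k\arr BG\simeq\Gamma$ be the atlas. It is faithfully flat---a point of a gerbe is an fppf cover, $\Spec k\times_\Gamma\Spec k$ being a torsor under the finite-type band $G$---and of finite type, hence a dominant morphism of finite type from an algebraic stack. For any object $\E$ of $\FDiv(\Gamma/k)$ (resp.\ of $\FDiv_\infty(\Gamma/k)$), the pullback $x^*\E$ lies in $\FDiv(\Spec k/k)$ (resp.\ $\FDiv_\infty(\Spec k/k)$); but the $F$-divided and Nori fundamental group schemes of a point over an algebraically closed field are trivial, so both of these categories are equivalent to $\Vect(k)$ and $x^*\E$ is free. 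Therefore $x$ trivializes $\E$, and Theorem~\ref{thmII} gives that $\E$ is essentially finite. Together with the reduction of the first paragraph, this proves that every object of $\FDiv(\Gamma/k)$ and of $\FDiv_\infty(\Gamma/k)$ is essentially finite for an arbitrary base field $k$.

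It remains to identify the gerbes, which I would carry out back over the original field $k$. Since every object of $\FDiv(\Gamma/k)$ (resp.\ of $\FDiv_\infty(\Gamma/k)$) is now known to be essentially finite, the Tannakian description recalled in the introduction (after \cite{BV} and \cite{TZ}) identifies $\FDiv(\Gamma/k)$ with the representation category of the Nori \'etale fundamental gerbe of $\Gamma/k$, and $\FDiv_\infty(\Gamma/k)$ with that of the Nori fundamental gerbe of $\Gamma/k$. For a gerbe of finite type these two gerbes are respectively the pro\'etale quotient $\Gamma_\et$ and the profinite quotient $\hat\Gamma$, since each classifies, in the evident universal sense, the morphisms from $\Gamma$ to pro-(finite \'etale), resp.\ pro-finite, gerbes; this yields $\FDiv(\Gamma/k)\simeq\Vect(\Gamma_\et)$ and $\FDiv_\infty(\Gamma/k)\simeq\Vect(\hat\Gamma)$. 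The only nonformal input is the triviality of the $F$-divided and Nori fundamental group schemes of a point over an algebraically closed field, used in the third paragraph, which is standard and available from \cite{TZ}; the one genuinely delicate point --- controlling $F$-divided sheaves over a possibly imperfect finite residue field, which would arise if one argued directly over $k$ through a faithfully flat finite-type point of $\Gamma$ and for which the geometric unibranchness in Theorem~\ref{thmII} (as opposed to the more restrictive hypotheses of Theorem~\ref{thmI}) is needed --- is precisely what the reduction to $\bar k$ renders unnecessary.
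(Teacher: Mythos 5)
Your overall strategy is the same as the paper's: trivialize every $F$-divided sheaf by pulling back along a point of the gerbe, invoke Theorem \ref{thmII} (which applies because a finite type gerbe is smooth, hence geometrically unibranch, reduced and inflexible), and then identify the resulting Nori gerbes with $\Gamma_\et$ and $\hat\Gamma$. That core is fine. The problem is your preliminary reduction to $k=\bar k$.

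That reduction rests on the claim that the monodromy gerbe of $\beta^*E$ in $\FDiv(\Gamma_{\bar k}/\bar k)$ is the base change of the monodromy gerbe of $E$, i.e.\ that essential finiteness can be tested after the \emph{infinite} extension $\bar k/k$. The paper only proves such compatibility for \emph{finite} extensions $l/k$ (Propositions \ref{base change for Fdiv} and \ref{essentially finite and base change}), and Remark \ref{base change for Fdiv_i} explicitly warns that for a general extension $k'/k$ the comparison map $\Pi_{\FDiv(\stX\times_k k'/k')}\arr \Pi_{\FDiv(\stX/k)}\times_k k'$ is only asserted (without proof, via Deligne's method) to be a quotient when $\stX$ is smooth; nothing in the paper lets you conclude it is an isomorphism, nor is the quotient statement itself established here. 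So as written your first paragraph uses an input that is not available. The gap is easily repaired, and in two ways: either base change only to a finite extension $L/k$ with $\Gamma(L)\neq\emptyset$ and use Appendix B, or — as the paper does — do not base change at all: take the map $\Spec L\arr\Gamma$ as a morphism over $k$ and observe that by Proposition \ref{voucher for finite stacks} one has $\FDiv(L/k)\simeq\Vect(L_\et)$ with $L_\et$ a field, so every object of $\FDiv(\Gamma/k)$ (and of $\FDiv_\infty(\Gamma/k)$) pulls back to a free object on $\Spec L$. Note in particular that the ``delicate point'' you invoke to justify passing to $\bar k$ — controlling $F$-divided sheaves over an imperfect finite extension — is exactly what \ref{voucher for finite stacks} already settles, so the detour through $\bar k$ buys nothing and costs you an unproved base-change statement.
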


The paper is divided as follows. In the first section we recall various properties and definitions, like of $F$-divided sheaves and Nori gerbes, and state the results we will use about them. The second section is dedicated to the proof of Theorem \ref{thmI} and Corollaries \ref{corI} and \ref{corII}, while the third one to the proof of Theorem \ref{thmII} and Corollary \ref{corIII}. Finally in the last appendices we discuss the notion of geometrically unibranch algebraic stacks and the behaviour of $\FDiv(-/k)$ under finite field extensions.

\section*{Notation}
A category fibered in groupoid $\stX$ over a ring $R$ is a category fibered in groupoid over the category of affine $R$-schemes $\Aff/R$. By an fpqc covering $\stU\arr \stX$ we mean a map of fibered categories which is respesented by fpqc coverings of algebraic spaces. If $\stX$ is an algebraic stack we denote by $\stX_\red$ its reduction.

If $\stX$ is a fibered category over $\F_p$ one can always define a Frobenius functor $F_\stX\colon\stX\arr\stX$ and, if $\stX$ is defined also over a field extension $k/\F_p$, a relative Frobenius functor $\stX\arr\stX^{(i,k)}$ for $i\in \N$. Here and in the rest of the paper by $\stX^{(i,k)}$ we mean the base change of $\stX$ over the $F_k^i\colon k\arr k$. When $k$ is clear from the context we will simply write $\stX^{(i)}$. Please refer to section \emph{Notations and Conventions} of \cite{TZ} for details.

In this paper we will freely talk about affine gerbes over a field (often improperly called just gerbes) and Tannakian categories and use their properties. Please refer to \cite[Appendix B]{TZ} for details. If $\shC$ is a $k$-Tannakian category then its associated affine gerbe over $k$ is denoted by $\Pi_\shC$: if $A$ is a $k$-algebra then $\Pi_\shC(A)$ is the category of $k$-linear, monoidal and exact functors $\shC\arr \Vect(A)$. 

If $\shC$ is a monoidal category we will often denote by $\mathds{1}_\shC$ or simply $\mathds{1}$ the unit object of $\shC$. An object in $\shC$ is called trivial or free if it is isomorphic to $\mathds{1}_\shC^{\oplus m}$ for some $m\in \N$.


\section*{Acknowledgement}
 We would like to thank B. Bhatt, N. Borne, H. Esnault, M. Olsson, M. Romagny and A. Vistoli for helpful conversations and suggestions received.

\section{Preliminaries}

 Everything in this section is either contained in or easy consequences of  results in  \cite{BV} and \cite{TZ}. We collect some important theorems and definitions here for the convenience of the reader.

\begin{defn}
Let $k$ be a field of characteristic $p>0$ and let $\sX$ be a category fibered in groupoid over $k$. We define $\Fdiv(\sX/k)$ to be the category with:

 \textbf{Objects}: tuples $(E_i,\sigma_i)_{i\in \N}$, where $E_i$ is a vector bundle on $\sX^{(i)}$ and an isomorphism $\sigma_i: \phi_{i,i+1}^*E_{i+1}\arr E_i$, where $\phi_{i,i+1}: \sX^{(i)}\arr \sX^{(i+1)}$ is the relative Frobenius map of $\sX^{(i)}/k$;
 
  \textbf{Morphisms}: a morphism from  $(E_i,\sigma_i)_{i\in \N}$ to $(E_i',\sigma_i')_{i\in \N}$ is just a collection of morphisms $a_i:E_i\arr E_i'$ making the diagram $$\xymatrix{\phi_{i,i+1}^*E_{i+1}\ar[rr]^{\sigma_i}\ar[d]_{\phi_{i,i+1}^*(a_{i+1})}&& E_i\ar[d]^{a_{i}}\\ \phi_{i,i+1}^*E_{i+1}'\ar[rr]^{\sigma_i'}&& E_i'}$$ commutative.

Given a natural number $i\in \N$ we also define the category $\Fdiv_i(\stX/k)$ as follows:

\textbf{Objects}:  triples $(\sF,\sG,\lambda)$, where $\sF\in\Vect(\sX)$, $\sG=(\sG_i,\sigma_i)_{i\in\N}\in\Fdiv(\sX/k)$ and $\lambda: F^{i^*}\sF\xrightarrow{\cong}\sG_0$ is an isomorphism (here $F$ denotes the absolute Frobenius of $\sX$);

  \textbf{Morphisms}: a morphism from  $(\sF,\sG,\lambda)$ to $(\sF',\sG',\lambda')$ is just a pair of morphisms $\phi:\sF\arr \sF'$ and $\varphi: \sG\arr \sG'$ making the diagram $$\xymatrix{F^{i^*}\sF\ar[rr]^{\lambda}\ar[d]_{F^{i^*}(\phi)}&& \sG_0\ar[d]^{\varphi_0}\\ F^{i^*}\sF'\ar[rr]^{\lambda'}&& \sG_0'}$$ commutative.
  
The categories $\Fdiv_i(\sX/k)$ are additive and monoidal and they have a $k$-linear structor coming from the one on $\sF$. There are monoidal and $k$-linear functors $\Fdiv_{i}(\sX/k)\arr \Fdiv_{i+1}(\sX/k)$, $(\sF,\sG,\lambda)\mapsto (\sF,F^*\sG,F^*\lambda)$ and we define $\Fdiv_{\infty}(\sX/k):=\varinjlim_{i\in\N}\Fdiv_{i}(\sX/k)$. (See \cite[Definition 5.9]{TZ} for details.)
\end{defn}

\begin{rmk}
Note that if $k$ is a perfect field, then one can easily check that $\Fdiv_{i_0}(\sX/k)$ is equivalent to the category defined as follows:

\textbf{Objects}:  tuples $(E_i,\sigma_i)_{i\in \N}$, where $E_i$ is a vector bundle on $\sX^{(i)}$, $\sigma_i:\phi_{i,i+1}^*E_{i+1}\xrightarrow{\cong} E_i$ for all $i\geq 1$, and $\sigma_0: \phi_{-i_0,0}^*E_{0}\xrightarrow{\cong} \phi_{-i_0,1}^*E_1$.

  \textbf{Morphisms}: a morphism from  $(E_i,\sigma_i)_{i\in \N}$ to $(E_i',\sigma_i')_{i\in \N}$ is just a collection of morphisms $a_i:E_i\arr E_i'$  which are compatible with $\sigma_i$ and $\sigma_{i}'$.
  
  Using this equivalence one can deduce that if  $k$ is perfect and if $\sX$ admits a representable  fpqc covering from a reduced scheme $\sX$ then the natural transition functors  $\Fdiv_{i}(\sX/k)\arr \Fdiv_{i+1}(\sX/k)$ are fully faithful.
\end{rmk}

\begin{rmk}\label{insensible to nilpotent thickenning}
 An important property of $\FDiv$ that will be used in all the paper is that it is insensible to nilpotent thickenings, that is if $\stX'\arr \stX$ is a nilpotent closed immersion then $\FDiv(\stX/k)\arr \FDiv(\stX'/k)$ is an equivalence (see \cite[Lemma 6.18]{TZ}). In particular if $\stX$ is a Noetherian algebraic stack and $\stX_\red$ its reduction then $\FDiv(\stX/k)\simeq \FDiv(\stX_\red/k)$.
\end{rmk}

 \begin{defn}
 Given a $k$-algebra $A$ we set
 \[
 A_{\et,k}=\{a\in A\st \exists \text{ a separable polynomial } f\in k[x] \text{ s.t. } f(a)=0\}
 \]
 Alternatively $A_{\et,k}$ is the union of all $k$-subalgebras of $A$ which are finite and \'etale over $k$. When the base field is clear from the context we will simply write $A_{\et}$.
\end{defn}

\begin{rmk}\cite[Lemma 2.6]{TZ}
 Let $\stX$ be a quasi-compact and quasi-separated algebraic stack over $k$. Then $\stX\arr \Spec \Hl^0(\odi\stX)_\et$ is geometrically connected and, if $\stX$ is connected, $\Hl^0(\odi\stX)_\et$ is a field. Moreover $\stX$ is geometrically connected over $k$ if and only if $\Hl^0(\odi\stX)_\et=k$
\end{rmk}

 \begin{defn}\label{inflexible}\cite[Section 5]{BV}
 If $\stX$ is a category fibered in groupoid over $k$ the Nori fundamental gerbe (resp. \'etale Nori fundamental gerbe) of $\stX/k$ is a profinite (resp. pro-\'etale) gerbe $\Pi$ over $k$ together with a map $\stX\arr \Pi$ such that for all finite (resp. finite and \'etale) stacks $\Gamma$ over $k$ the pullback functor
 \[
 \Hom_k(\Pi,\Gamma)\arr \Hom_k(\stX,\Gamma)
 \]
 is an equivalence. If this gerbe exists it is unique up to a unique isomorphism and it will be denoted by $\Pi^\NN_{\stX/k}$ (resp. $\Pi^{\NN,\et}_{\stX/k}$) or by dropping the $/k$ if it is clear from the context.
 
 We call $\sX$ inflexible if it is non-empty and all maps to a finite stack over $k$ factors through a finite gerbe over $k$.
\end{defn}

\begin{rmk}\label{inflexible properties}
By \cite[Definition 5.7, pp. 13]{BV} $\sX$ admits a Nori fundamental gerbe if and only it is inflexible.
By \cite[Theorem 4.4]{TZ} if $\stX$ is reduced, quasi-compact and quasi-separeted then $\stX$ is inflexible if and only if $k$ is algebraically closed in $\Hl^0(\odi \stX)$. In particular if $\stX$ is geometrically connected and geometrically reduced then it is inflexible.

By \cite[Proposition 4.3]{TZ} if $\stX$ is quasi-compact and quasi-separeted then $\stX$
admits a Nori \'etale fundamental gerbe if and only if $\stX$ is geometrically connected over $k$.
\end{rmk}

\begin{defn}\label{essentially finite}\cite[Definition 7.7, pp. 21]{BV}
Let $\shC$ be an additive and monoidal category. An object $E\in\shC$ is called finite if there exist $f\neq g \in \N[X]$ polynomials with natural coefficients and an isomorphism $f(E)\simeq g(E)$, it is called essentially finite if it is a kernel of a map of finite objects of $\shC$. We denote by $\EF(\shC)$ the full subcategory of $\shC$ consisting of essentially finite objects.
\end{defn}


\begin{defn}  \cite[Definition 7.1, pp. 20]{BV} 
A category $\sX$ fibered in groupoid over a field $k$ is \textit{pseudo-proper} if it satisfies the following two conditions:
\begin{enumerate}
\item there exists a quasi-compact scheme $U$ and a morphism $U\arr X$ which is representable, faithfully flat, quasi-compact, and quasi-separated.
\item for all vector bundles $E$ on $X$ the $k$-vector space $\Hl^0(\sX,E)$ is finite dimensional.
\end{enumerate}
\end{defn}

\begin{ex}  \cite[Example 7.2, pp. 20]{BV} 
Examples of pseudo-proper fiber categories are proper schemes, finite stacks and affine gerbes.
\end{ex}

\begin{thm}\label{monodromy essentially finite} \cite[Theorem 7.9, Corollary 7.10, pp. 22]{BV} 
Let $\sX$ be an inflexible pseudo-proper fibered category over a field $k$. Then the pullback of $\stX\arr\Pi^\NN_{\stX/k}$ induces an equivalence $\Vect(\Pi_{\sX/k}^\NN)\arr \EF(\Vect(\sX))$.

Let $\sC$ be a Tannakian category. Then $\EF(\sC)$ is the Tannakian subcategory of $\shC$ of objects whose monodromy gerbe is finite.
\end{thm}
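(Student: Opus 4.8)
The plan is to exhibit $\EF(\Vect(\sX))$ as a $k$-Tannakian category whose associated gerbe satisfies the universal property defining $\Pi^\NN_{\sX/k}$, and then to obtain the statement about monodromy gerbes by applying this to the Tannaka gerbe of $\shC$ itself. First I would check that $\EF(\Vect(\sX))$ is a $k$-Tannakian category. By its very definition it is a full additive rigid monoidal subcategory of $\Vect(\sX)$, closed under tensor products and duals, and the real content is abelianness: a morphism between essentially finite bundles must have kernel and cokernel which are again locally free (and essentially finite). This is where pseudo-properness is indispensable. It forces all $\Hom$-spaces between vector bundles to be finite dimensional over $k$, so that a finite object $E$ — for which only finitely many isomorphism classes occur among the indecomposable summands of the bundles produced from $E$ by iterated sums, tensor products and passage to summands — behaves like a representation of a finite monoid; Nori's argument then shows that subquotients of finite direct sums of finite objects form an abelian category in which kernels and cokernels are computed ``internally'' and are automatically locally free. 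Inflexibility is used to identify $\End(\mathds{1})=k$ (equivalently, $k$ integrally closed in $\Hl^0(\odi\sX)$, see \ref{inflexible properties}), which is what makes the resulting category Tannakian over $k$ rather than over a finite extension. Write $\Gamma$ for its Tannaka gerbe.

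Next I would construct the canonical morphism $\sX\arr\Gamma$: the inclusion $\EF(\Vect(\sX))\inj\Vect(\sX)$ is an exact $k$-linear monoidal functor, hence by the Tannakian formalism corresponds to a $k$-morphism $\sX\arr\Gamma$. Then I would show $\Gamma$ is profinite. The key computation is that every object of $\Vect(\Delta)$, for $\Delta$ a finite gerbe, is essentially finite: after a faithfully flat base change $\Delta$ becomes $\sB G$ for a finite group scheme $G$, the regular representation $A=k[G]$ is a finite object because of the tensor identity $A\otimes V\cong A^{\oplus\dim V}$, which gives $A^{\otimes n}\cong A^{\oplus(\dim A)^{n-1}}$, while every representation of $G$ is a subquotient of a finite sum of copies of $A$; essential finiteness then descends along the faithfully flat map. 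Conversely, each finite object of $\Vect(\sX)$ generates a Tannakian subcategory with finite monodromy gerbe, and $\EF(\Vect(\sX))$ is the filtered union of these subcategories, so $\Gamma=\varprojlim\Delta$ over finite gerbes, i.e. $\Gamma$ is profinite. Finally, for a finite gerbe $\Delta$, a $k$-morphism $\sX\arr\Delta$ is the same as an exact $k$-linear monoidal functor $\Vect(\Delta)\arr\Vect(\sX)$; since $\Vect(\Delta)$ consists of essentially finite objects, its image lies in $\EF(\Vect(\sX))=\Vect(\Gamma)$, so the functor factors uniquely through $\Vect(\Gamma)$, i.e. through a $k$-morphism $\Gamma\arr\Delta$, and restriction along $\sX\arr\Gamma$ is the inverse. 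Hence $\Gamma=\Pi^\NN_{\sX/k}$ and $\Vect(\Pi^\NN_{\sX/k})=\EF(\Vect(\sX))$, compatibly with the pullback along $\sX\arr\Pi^\NN_{\sX/k}$ by construction.

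For the last statement I would apply the above to $\sX=\Pi_\shC$, which is an affine gerbe, hence pseudo-proper, and inflexible since gerbes are geometrically connected. Then $\Vect(\Pi_\shC)=\shC$, so $\EF(\shC)=\Vect(\Pi^\NN_{\Pi_\shC/k})$, the representations of the profinite completion of $\Pi_\shC$, which is precisely $\varinjlim\Vect(\Delta)$ over the finite quotient gerbes $\Delta$ of $\Pi_\shC$. An object $E\in\shC$ has finite monodromy gerbe exactly when the Tannakian subcategory $\langle E\rangle^{\otimes}$ it generates equals $\Vect(\Delta)$ for a finite $\Delta$; then $E\in\Vect(\Delta)\subseteq\EF(\shC)$. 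Conversely, if $E$ is essentially finite then $\langle E\rangle^{\otimes}\subseteq\EF(\shC)=\varinjlim\Vect(\Delta)$, and being generated by a single object it already lies in some $\Vect(\Delta)$; a Tannakian subcategory of the representations of a finite group scheme is the representation category of a quotient group scheme, still finite, so the monodromy gerbe of $E$ is finite.

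I expect the main obstacle to be the abelianness of $\EF(\Vect(\sX))$ in the first step — keeping kernels and cokernels of maps between essentially finite bundles locally free — which is exactly the place where pseudo-properness has to be exploited; the ``vector bundles on a finite gerbe are essentially finite'' input to the profiniteness step is the other delicate point, chiefly because of the possible absence of a section forcing one to argue by faithfully flat descent.
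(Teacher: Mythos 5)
This statement is quoted verbatim from \cite[Theorem 7.9, Corollary 7.10]{BV} and the paper offers no proof of its own, so there is nothing internal to compare your argument against; what you have written is a reconstruction of the Borne--Vistoli proof, and as an outline it follows their strategy correctly: establish that $\EF(\Vect(\sX))$ is $k$-Tannakian, identify its gerbe with $\Pi^\NN_{\sX/k}$ via the universal property against finite gerbes, and then specialize to $\sX=\Pi_\shC$ to get the monodromy statement. The ``vector bundles on a finite gerbe are essentially finite'' step (regular representation, tensor identity $A\otimes V\cong A^{\oplus\dim V}$, descent) and the deduction of the corollary from the theorem are both sound as you present them.

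That said, the two points you yourself flag as the obstacles are exactly where all the content of \cite[\S 7]{BV} lives, and in your write-up they remain assertions. First, the claim that kernels and cokernels of maps between essentially finite bundles, computed in $\QCoh(\sX)$, are again locally free is only clear \emph{a posteriori}, once one knows these maps are pulled back from a finite gerbe; proving abelianness directly requires the Krull--Schmidt analysis of the finitely many indecomposable summands occurring in sums, tensor powers and duals of a finite object, and the non-formal passage from that finiteness to a finite monodromy gerbe. Your phrase ``Nori's argument then shows\dots'' is a black box covering precisely this. Second, your justification that an affine gerbe is inflexible (``since gerbes are geometrically connected'') is not the right reason --- geometric connectedness alone does not give inflexibility (cf.\ \ref{inflexible properties}, which also needs reducedness and is stated for quasi-compact quasi-separated stacks); the correct argument is that the image of a gerbe in a finite stack is a finite gerbe, so every map to a finite stack factors through a finite gerbe by definition \ref{inflexible}. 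Neither issue is fatal for a proof \emph{sketch}, but as a self-contained proof the argument is incomplete at its technical heart.
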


\begin{defn} \label{separably generated}
 A field extension $L/k$ is called separably generated (resp. separable) up to Frobenius if $\car k=0$ or $\car k=p>0$ and there exists $i\in \N$ such that $L^{p^i}$ is contained in a separably generated (resp. separable) field extension of $k$ (see \cite[\href{http://stacks.math.columbia.edu/tag/030I}{030I}]{SP}).
\end{defn}

Putting together results \cite[Lemma 2.6, Theorem 4.4, Theorem 5.8, Theorem 5.12, Theorem 6.17]{TZ} we deduce the following:

\begin{thm}\label{thm for Fdiv}
Let $\stX$ be a connected algebraic stack over $k$ with an fpqc covering $U\arr \stX$ from a Noetherian scheme $U$ such that, for all $u\in U$, $k(u)/k$ is separable up to Frobenius and set $L=\End_{\FDiv(\stX/k)}(\mathds{1})$. Then $\FDiv(\stX/k)\arr \Vect(\stX)$ is faithful, $L\subseteq \Hl^0(\odi\stX)$ is a field, $\Fdiv(\stX/k)$ is an $L$-Tannakian category and there is an equivalence of $L$-Tannakian categories $\Vect(\Pi^{\NN,\et}_{\stX/L})\simeq \EF(\Fdiv(\stX/k))$. If there exists a map $\Spec F\arr \stX$ where $F$ is a field separably generated up to Frobenius over $k$ then $L=\Hl^0(\odi\stX)_\et$, so that $L=k$ if and only if $\stX$ is geometrically connected over $k$.
 
 Assume $\stX$ reduced and set $L_i=\{x\in \Hl^0(\odi\stX)\st x^{p^i}\in L\}$ for $i\in \N$ and $L_\infty=\cup_i L_i$. Then $\Fdiv_i(\stX/k)$ is an $L_i$-Tannakian category for $i\in \N\bigcup\{\infty\}$ and there is an equivalence of $L_\infty$-Tannakian categories $\Vect(\Pi^\NN_{\stX/L_\infty})\simeq \EF(\Fdiv_\infty(\stX/k))$.  Moreover
 \[
 \EF(\FDiv_\infty(\stX/k))\simeq \varinjlim_i \EF(\FDiv_i(\stX/k))
 \]
 and $(\shF,\shG,\lambda)\in \FDiv_i(\stX/k)$ is essentialy finite if and only if $\shG$ is essentially finite in $\FDiv(\stX/k)$.
 If there exists a map $\Spec F\arr \stX$ where $F$ is a field separably generated up to Frobenius over $k$ then $L_\infty=k$ if and only if $\stX$ is inflexible over $k$.
\end{thm}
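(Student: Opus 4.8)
The statement is a consolidation of several results of \cite{TZ}, so the plan is to isolate each assertion and pair it with the corresponding statement there; the one genuinely delicate point is the bookkeeping of which among the fields $L$, $L_i$, $L_\infty$ controls the Tannakian structure, and how base change along these extensions interacts with the Frobenius twists $\stX^{(i)}$. The hypothesis on the atlas $U$ — reduced, Noetherian, with residue fields separable up to Frobenius — is present precisely to force the twists $\stX^{(i)}$ to be reduced, which is what every faithfulness and fully faithfulness statement below relies on.

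First I would show that $\FDiv(\stX/k)\arr\Vect(\stX)$ is faithful: reducedness of the $\stX^{(i)}$ makes each relative Frobenius $\phi_{i,i+1}$ schematically dominant, hence $\phi_{i,i+1}^*$ injective on $\Hom$ between vector bundles, and since the $\sigma_i$ are isomorphisms a morphism $(a_i)$ of $F$-divided sheaves with $a_0=0$ has all $a_i=0$ by induction. This faithfulness identifies $L=\End_{\FDiv(\stX/k)}(\mathds{1})$ with a subring of $\Hl^0(\odi\stX)$, and the compatibility of an endomorphism of the unit with the Frobenius descent data confines it to the separable core of $\Hl^0(\odi\stX)$, which is a field because $\stX$ is connected (\cite[Lemma 2.6]{TZ}). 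With $L$ a field, the $L$-linear additive rigid monoidal structure on $\FDiv(\stX/k)$ together with a fibre functor coming from a point of $U$ by fpqc descent makes it an $L$-Tannakian category; this is \cite[Theorem 5.8]{TZ}. The equivalence $\Vect(\Pi^{\NN,\et}_{\stX/L})\simeq\EF(\FDiv(\stX/k))$ of $L$-Tannakian categories is then \cite[Theorem 6.17]{TZ}: objects of $\FDiv(\stX/k)$ trivialised by a finite étale cover correspond, via \cite{BV}, to representations of finite étale gerbes over $L$, and passing to pro-objects and invoking \cite[Theorem 7.9]{BV} over $L$ — over which $\stX$ becomes geometrically connected, by \cite[Lemma 2.6]{TZ}, as soon as a point $\Spec F\arr\stX$ with $F$ separably generated up to Frobenius over $k$ exists — identifies the essentially finite $F$-divided sheaves with $\Vect(\Pi^{\NN,\et}_{\stX/L})$. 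The same \cite[Lemma 2.6]{TZ} yields $L=\Hl^0(\odi\stX)_\et$ under that extra hypothesis, whence $L=k$ exactly when $\stX$ is geometrically connected over $k$.

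Assuming now $\stX$ reduced, so that $\Hl^0(\odi\stX)$ is reduced, I would turn to $\FDiv_i$ and $\FDiv_\infty$. The unit of $\FDiv_i(\stX/k)$ is $(\odi\stX,\mathds{1},\id)$, and chasing the defining diagram shows that an endomorphism of it is a function $x\in\Hl^0(\odi\stX)$ with $F^{i*}(x)=x^{p^i}$ in the image of $L$; hence $\End_{\FDiv_i(\stX/k)}(\mathds{1})=L_i$, which is a field since $L$ is. The $L_i$-Tannakian structure then follows as before, and is \cite[Theorem 5.12]{TZ}; the filtered colimit over $i$ exhibits $\FDiv_\infty(\stX/k)$ as $L_\infty$-Tannakian. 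Because both finiteness (a relation $f(E)\simeq g(E)$) and being a kernel of a map of finite objects are already detected at a finite stage of a filtered colimit, $\EF$ commutes with the colimit, giving $\EF(\FDiv_\infty(\stX/k))\simeq\varinjlim_i\EF(\FDiv_i(\stX/k))$. That $(\shF,\shG,\lambda)\in\FDiv_i(\stX/k)$ is essentially finite if and only if $\shG\in\FDiv(\stX/k)$ is follows because, after a Frobenius twist, the functor $(\shF,\shG,\lambda)\mapsto\shG$ is fully faithful on the subcategories generated by finite objects — $\shF$ being determined up to Frobenius by $\shG_0$ through $\lambda$ — so it preserves and reflects kernels of maps of finite objects. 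Finally $\EF(\FDiv_\infty(\stX/k))\simeq\Vect(\Pi^\NN_{\stX/L_\infty})$ is again \cite[Theorem 6.17]{TZ}, now for the Nori gerbe since finite possibly non-étale covers are allowed, and $L_\infty=k\iff\stX$ inflexible follows from \cite[Theorem 4.4]{TZ}, because $L_\infty$ is exactly the subfield of $\Hl^0(\odi\stX)$ of elements becoming separable over $k$ after some Frobenius, and this is $k$ precisely when $k$ is algebraically closed in $\Hl^0(\odi\stX)$.

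The main difficulty I anticipate is not in any single citation but in the coherent bookkeeping of $L$, $L_i$, $L_\infty$ and of their interaction with the Frobenius twists — in particular, checking that base-changing the Nori (resp. étale Nori) gerbe along $k\hookrightarrow L_\infty$ (resp. $k\hookrightarrow L$) leaves the relevant category of essentially finite $F$-divided sheaves unchanged, and that the residue-field hypothesis on $U$ really does propagate through every faithfulness and fully faithfulness claim. Once that is organised, the theorem follows by combining \cite[Lemma 2.6, Theorem 4.4, Theorem 5.8, Theorem 5.12, Theorem 6.17]{TZ} with \cite[Theorem 7.9]{BV}.
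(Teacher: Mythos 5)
Your proposal is correct and follows essentially the same route as the paper, which offers no argument beyond the single line ``Putting together results \cite[Lemma 2.6, Theorem 4.4, Theorem 5.8, Theorem 5.12, Theorem 6.17]{TZ} we deduce the following'' --- exactly the citations you assemble (supplemented by \cite[Theorem 7.9]{BV}, on which \cite{TZ} itself relies). The extra detail you supply (faithfulness via schematic dominance of the relative Frobenius, identification of $L$, $L_i$, $L_\infty$, commutation of $\EF$ with the filtered colimit) is consistent with how \cite{TZ} proves these statements; just note that for the faithfulness of $\FDiv(\stX/k)\arr\Vect(\stX)$ when $\stX$ is not reduced one should first pass through $\FDiv(\stX/k)\simeq\FDiv(\stX_\red/k)$ and $\Vect(\stX)\arr\Vect(\stX_\red)$, since the relative Frobenius is schematically dominant only on the reduction.
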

\begin{rmk}\label{cases for thm for Fdiv}
 A separably generated (up to Frobenius) field extension is separable (up to Frobenius) and all extensions of a perfect field are separable (see \cite[\href{http://stacks.math.columbia.edu/tag/05DT}{05DT}]{SP}). For instance finitely generated field extensions are separably generated up to Frobenius. Thus Theorem \ref{thm for Fdiv} apply when $\stX$ is a stack of finite type over $k$: if $\stX$ is geometrically connected over $k$ then $\FDiv(\stX/k)$ is a $k$-Tannakian category over $k$ and if $\stX$ is reduced and inflexible over $k$ then $\FDiv_\infty(\stX/k)$ is a $k$-Tannakian category too. 
\end{rmk}

\begin{defn}\cite[Definition 6.16]{BV2}\label{virtually unipotent}
 A \emph{virtually unipotent} group scheme over $k$ is an affine group scheme $G$ of finite type over $k$ such that the reduction of the connected component of $G\times_k {\overline k}$ is unipotent, where $\overline k$ is an algebraic closure of $k$.
 
 An affine gerbe $\Gamma$ over $k$ is virtually unipotent if $\Gamma\times_k \overline k \simeq \sB G$, where $G$ is a virtually unipotent group scheme over $\overline k$. A pro virtually unipotent gerbe is an affine gerbe projective limit of virtually unipotent gerbes.
 \end{defn}
 
 \begin{defn}\cite[Definition 5.6]{BV2}\label{virtually unipotent fundamental gerbe}
 If $\stX$ is a category fibered in groupoid over $k$ the \emph{virtually unipotent fundamental gerbe} of $\stX/k$ is a pro virtually unipotent gerbe $\Pi$ over $k$ together with a map $\stX\arr \Pi$ such that for all virtually unipotent gerbes $\Gamma$ over $k$ the pullback functor
 \[
 \Hom_k(\Pi,\Gamma)\arr \Hom_k(\stX,\Gamma)
 \]
 is an equivalence. If this gerbe exists it is unique up to a unique isomorphism and it will be denoted by $\Pi^{\textup{VU}}_{\stX/k}$ or by dropping the $/k$ if it is clear from the context.
 \end{defn}

 \begin{thm}\cite[Section 6.3, Theorem 7.1]{BV2}
  If $\stX$ is a quasi-compact, quasi-separated and geometrically reduced fibered category over $k$ such that $\Hl^0(\stX,\odi\stX)=k$ then $\stX$ admits a virtually unipotent fundamental gerbe.
 \end{thm}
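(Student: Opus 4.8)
The plan is to reconstruct $\Pi^{\textup{VU}}_{\stX/k}$ as a codirected $2$-limit of virtually unipotent gerbes, in the spirit of the construction of the Nori fundamental gerbe in \cite{BV}. The input about the class of virtually unipotent gerbes over $k$ is that it is closed under the two operations that matter and is essentially small: (i) if $\Gamma_1,\Gamma_2$ are virtually unipotent then so is $\Gamma_1\times_k\Gamma_2$, because $\Gamma_i\times_k\overline{k}\simeq \sB G_i$ and $(G_1\times G_2)^{\circ}_{\red}=(G_1)^{\circ}_{\red}\times(G_2)^{\circ}_{\red}$ is a product of unipotent groups; (ii) a closed sub-gerbe of a virtually unipotent gerbe is virtually unipotent, since a closed subgroup scheme of a virtually unipotent group scheme is virtually unipotent (closed subgroups of unipotent groups are unipotent); and finally virtually unipotent group schemes of finite type over $k$, and the gerbes banded by them, form only a set up to isomorphism.

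The core technical step is a \emph{monodromy gerbe} lemma: for every morphism $\xi\colon\stX\arr\Gamma$ to an affine gerbe $\Gamma$ over $k$ there is a smallest closed sub-gerbe $\Imm(\xi)\subseteq\Gamma$ through which $\xi$ factors; equivalently, the scheme-theoretic image of $\xi$ is a sub-gerbe. This is where the hypotheses on $\stX$ enter: quasi-compactness makes the descending family of closed sub-gerbes through which $\xi$ factors stabilize (closed subgroup schemes of a group scheme of finite type satisfy the descending chain condition), geometric reducedness makes the image reduced, and $\Hl^0(\stX,\odi\stX)=k$ --- so that $\Hl^0$ of $\stX_{\overline{k}}$ is $\overline{k}$, i.e.\ $\stX$ is geometrically connected --- makes the minimal such sub-gerbe unique rather than merely unique up to the choice of a component. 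Call $\xi$ \emph{reduced} if $\Imm(\xi)=\Gamma$. Then every $\xi\colon\stX\arr\Gamma$ with $\Gamma$ virtually unipotent factors uniquely as $\stX\arr\Imm(\xi)\hookrightarrow\Gamma$ with the first arrow reduced, and by (ii) $\Imm(\xi)$ is still virtually unipotent.

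Next I would show that a morphism between reduced objects is unique whenever it exists: if $\xi_1\colon\stX\arr\Gamma_1$ is reduced and $\psi,\psi'\colon\Gamma_1\arr\Gamma_2$ satisfy $\psi\circ\xi_1\cong\psi'\circ\xi_1$, then the composite $\stX\xrightarrow{\xi_1}\Gamma_1\xrightarrow{(\psi,\psi')}\Gamma_2\times_k\Gamma_2$ factors through the diagonal closed sub-gerbe $\Gamma_2\hookrightarrow\Gamma_2\times_k\Gamma_2$; applying the monodromy lemma and using that $\xi_1$ is reduced forces $(\psi,\psi')$ itself to factor through the diagonal, i.e.\ $\psi\cong\psi'$. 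Hence the $2$-category of reduced morphisms from $\stX$ to virtually unipotent gerbes is a codirected poset: it is nonempty ($\stX\arr\Spec k$), and given reduced $\xi_i\colon\stX\arr\Gamma_i$ the reduced factorization of $(\xi_1,\xi_2)\colon\stX\arr\Gamma_1\times_k\Gamma_2$ gives a reduced morphism lying below both. Let $\Pi$ be the $2$-limit of the $\Gamma$'s over this poset; by the closure properties it is a pro virtually unipotent gerbe, and the reduced morphisms assemble into a canonical $\stX\arr\Pi$.

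It remains --- and this, together with the monodromy gerbe lemma, is the main obstacle --- to verify that $\Hom_k(\Pi,\Gamma)\arr\Hom_k(\stX,\Gamma)$ is an equivalence for every virtually unipotent $\Gamma$. Essential surjectivity is immediate: a morphism $\xi\colon\stX\arr\Gamma$ factors through $\Imm(\xi)$, which appears in the defining diagram, hence through $\Pi$. For full faithfulness one uses that $\Pi$ is a codirected limit of gerbes and $\Gamma$ is of finite type, so every morphism $\Pi\arr\Gamma$ factors through some term $\Gamma_\alpha$, and two morphisms $\Pi\rightrightarrows\Gamma$ that agree after restriction along $\stX\arr\Pi$ factor, after passing to a common lower bound $\Gamma_\gamma$, through a pair $\Gamma_\gamma\rightrightarrows\Gamma$ agreeing after precomposition with the reduced morphism $\stX\arr\Gamma_\gamma$; uniqueness of morphisms out of reduced objects then concludes the verification. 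The one genuinely delicate point is the monodromy gerbe lemma; one proves it by passing to $\overline{k}$, reinterpreting $\xi$ as a torsor (equivalently a tensor functor) and constructing the minimal group of definition exactly as in the construction of the monodromy group scheme in the Tannakian setting, the hypotheses ``geometrically reduced'' and ``$\Hl^0(\odi\stX)=k$'' being precisely what makes that group well defined.
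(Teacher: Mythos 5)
This theorem is imported into the paper as a citation of \cite[Section 6.3, Theorem 7.1]{BV2}; the paper itself contains no proof, so the comparison is really with Borne--Vistoli's argument. Your proposal reconstructs exactly that argument: verify that virtually unipotent gerbes are closed under finite products and under passage to the (faithful) sub-gerbe image, prove the monodromy-gerbe lemma, deduce that reduced morphisms to virtually unipotent gerbes form a cofiltered essentially small $2$-category, and take the $2$-limit. The outline and the role you assign to each hypothesis are correct.

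Two small remarks. First, the entire weight of the theorem rests on the monodromy-gerbe lemma (that for $\stX$ quasi-compact, quasi-separated, geometrically reduced with $\Hl^0(\odi\stX)=k$, every morphism to an affine gerbe factors through a smallest faithful sub-gerbe); you correctly flag this as the delicate point but only gesture at its proof, so as written the proposal is a faithful skeleton rather than a complete argument. Second, the descending chain condition on closed subgroup schemes comes from $\Gamma$ being of finite type (Noetherianity of the group scheme), not from quasi-compactness of $\stX$; the quasi-compactness and quasi-separatedness of $\stX$ are instead what make $\Hl^0$ commute with the base change to $\overline{k}$ (hence give geometric connectedness from $\Hl^0(\odi\stX)=k$) and what make the Tannakian image well defined. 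Neither point changes the structure of the proof.
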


\begin{defn}\cite[Definition 10.2, Definition 10.6]{BV2}\label{filtration vu}
 Let $\stX$ be a fibered category. A vector bundle $E$ on $\stX$ is called an \emph{extended essentially finite} sheaf if there is a filtration
 \[
 0=E_{r+1} \subseteq E_r \subseteq \cdots \subseteq E_1 \subseteq E_0 = E 
 \]
 in which all quotients $E_i/E_{i+1}$ are essentially finite vector bundles on $\stX$.
 
 Assume that $k$ has positive characteristic and let $F\colon \stX\arr\stX$ be the absolute Frobenius of $\stX$. A vector bundle $E$ on $\stX$ is called \emph{virtually unipotent} if there exists $m\in \N$ such that $F^{m*}E$ is an extended essentially finite vector bundle on $\stX$.
\end{defn}

\begin{thm}\cite[Theorem 10.7]{BV2}\label{Tannakian virtually unipotent}
 Let $\stX$ be a pseudo-proper, geometrically reduced and geometrically connected fibered category admitting an fpqc cover from a Noetherian reduced scheme. Then the pullback $\Vect(\Pi^{\textup{VU}}_{\stX/k})\arr \Vect(\stX)$ is an equivalence onto the full subcategory of $\Vect(\stX)$ of virtually unipotent sheaves.
\end{thm}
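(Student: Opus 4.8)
The statement is Tannakian in nature, so the plan is as follows. Write $\mathcal V\subseteq\Vect(\stX)$ for the full subcategory of virtually unipotent sheaves and $\mathcal E\subseteq\mathcal V$ for the full subcategory of extended essentially finite ones (Definition \ref{filtration vu}). I will show that $\mathcal V$ is a $k$-Tannakian subcategory of $\Vect(\stX)$ and then identify its associated affine gerbe with $\Pi^{\textup{VU}}_{\stX/k}$; the equivalence asserted in the theorem is then just Tannaka reconstruction, since the canonical equivalence $\Vect(\Pi_{\mathcal V})\simeq\mathcal V$ is precisely pullback along the induced morphism $\stX\arr\Pi_{\mathcal V}$. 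Throughout I use that the hypotheses make $\stX$ inflexible with $\Hl^0(\odi\stX)=k$ (Remark \ref{inflexible properties}) and place us in the setting of Theorem \ref{monodromy essentially finite}, so that $\EF(\Vect(\stX))=\Vect(\Pi^\NN_{\stX/k})$ is a $k$-Tannakian category in which every object has finite monodromy gerbe; I also use that the absolute Frobenius preserves essentially finite bundles, since precomposing a map $\stX\arr\Gamma'$ to a finite stack with $F\colon\stX\arr\stX$ still lands in a finite stack and hence factors through $\Pi^\NN_{\stX/k}$.

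First I would prove that $\mathcal V$ is $k$-Tannakian. Given filtrations of $W,W'\in\mathcal E$, one forms the tensor, direct sum and dual filtrations, whose graded pieces are tensor products, sums and duals of essentially finite bundles and hence essentially finite; since $F^{m*}(A\otimes B)=F^{m*}A\otimes F^{m*}B$ and $F^{m*}(A^{\vee})=(F^{m*}A)^{\vee}$, the same operations preserve $\mathcal V$. The delicate point — which I expect to be the main obstacle — is that $\mathcal E$, and hence $\mathcal V$, is \emph{abelian}, i.e. closed under sub-bundles, quotient bundles and kernels. This is not formal: $\Vect(\stX)$ need not be abelian; a general extension of essentially finite bundles is not essentially finite, which is exactly why the notion ``extended'' is needed; and a naive intersection or image of a filtration need not be locally free. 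Here one must use pseudo-properness together with the reduced Noetherian atlas to guarantee that kernels and images of maps in the generated monoidal category are again vector bundles and that the iterated $\Ext^1$-groups governing the filtrations are finite dimensional. Granting this, $\mathcal E$ is the $k$-Tannakian subcategory of $\Vect(\stX)$ generated under extensions by $\Vect(\Pi^\NN_{\stX/k})$, and $\mathcal V=\bigcup_m (F^{m*})^{-1}(\mathcal E)$ is $k$-Tannakian as well; this closure analysis is where I would lean on the general theory of \cite{BV2}.

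Next I would show that the gerbe $\Pi_{\mathcal V}$ of $\mathcal V$ is pro virtually unipotent, equivalently that every $E\in\mathcal V$ has virtually unipotent monodromy gerbe $\Gamma_E$ (these are of finite type and $\Pi_{\mathcal V}=\varprojlim_E\Gamma_E$). For $E\in\mathcal E$ with filtration having essentially finite graded pieces $E_i/E_{i+1}$: base-changing to $\overline k$, the monodromy gerbe is $\sB H$, where $H$ acts on each $E_i/E_{i+1}$ through a finite quotient (Theorem \ref{monodromy essentially finite}), so there is an exact sequence $1\arr N\arr H\arr Q\arr 1$ with $Q$ finite and $N$, the subgroup acting trivially on the associated graded, unipotent; then $(H^0)_{\red}$ maps into the infinitesimal group $Q^0$ with reduced, hence trivial, image, so $(H^0)_{\red}\subseteq N$ is unipotent and $\Gamma_E$ is virtually unipotent. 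For general $E\in\mathcal V$ pick $m$ with $F^{m*}E\in\mathcal E$; over $\overline k$ the monodromy of $F^{m*}E$ is the image of $H$ under the $m$-fold Frobenius, whose kernel is the infinitesimal Frobenius kernel, so this image has the same reduced subgroup as $H$; since that monodromy is virtually unipotent by the previous case, $(H^0)_{\red}$ is unipotent and $\Gamma_E$ is virtually unipotent.

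Finally I would check that $\Pi_{\mathcal V}$ has the universal property of the virtually unipotent fundamental gerbe (Definition \ref{virtually unipotent fundamental gerbe}): every map $g\colon\stX\arr\Gamma$ to a virtually unipotent gerbe factors through $\Pi_{\mathcal V}$. Combined with pro virtual unipotence of $\Pi_{\mathcal V}$ and the universal property of $\Pi^{\textup{VU}}_{\stX/k}$, this forces $\Pi_{\mathcal V}\cong\Pi^{\textup{VU}}_{\stX/k}$ compatibly with the maps from $\stX$, which completes the proof. It suffices to show $g^{*}\Vect(\Gamma)\subseteq\mathcal V$. Base-changing to $\overline k$ we may write $\Gamma=\sB G$ with $G$ virtually unipotent, so $U:=(G^0)_{\red}$ is a unipotent normal subgroup and $G/U$ is finite, because $G/G^0$ is finite \'etale and $G^0/U$ is infinitesimal. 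A representation $V$ of $G$ restricts to a successive extension of trivial representations of the unipotent group $U$, so its $U$-socle filtration is $G$-stable with graded pieces killed by $U$, i.e. representations of the finite group scheme $G/U$; since $U$ is characteristic, it, the quotient, and this filtration descend to $k$, and the graded pieces of $g^{*}V$ are pulled back along $\stX\arr\Gamma\arr\Gamma'$ with $\Gamma'$ the finite gerbe obtained by killing the normal subgerbe corresponding to $U$, hence are essentially finite by the universal property of $\Pi^\NN_{\stX/k}$; so $g^{*}V\in\mathcal E\subseteq\mathcal V$. I note that $\dim_k\Hl^1(\stX,-)$ is not assumed finite here (in contrast with Corollary \ref{corII}), so the group $N$ above need not be finite and $\Pi^{\textup{VU}}_{\stX/k}$ may be a genuinely pro-infinite, non-\'etale gerbe; the argument therefore has to be carried out with pro-gerbes throughout.
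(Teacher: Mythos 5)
This statement is not proved in the paper at all: it is imported verbatim as \cite[Theorem 10.7]{BV2} and used only as an input to Corollary \ref{corII}, so there is no internal proof to compare your attempt against. Measured against the source it cites, your outline is the standard (and, as far as I can tell, the intended) Tannakian argument: show the virtually unipotent sheaves form a $k$-Tannakian subcategory $\mathcal V\subseteq\Vect(\stX)$, show $\Pi_{\mathcal V}$ is pro virtually unipotent by analysing $(H^{0})_{\red}$ for the monodromy group $H$ of a filtered object over $\overline k$, and verify the universal property of Definition \ref{virtually unipotent fundamental gerbe} via the $U$-socle filtration of a representation of a virtually unipotent group. Those three steps, and your observations that $N=\ker\bigl(H\arr\GL(\oplus_i E_i/E_{i+1})\bigr)$ is unipotent with finite quotient, and that relative Frobenius has infinitesimal kernel so $(H^{0})_{\red}$ is unchanged by Frobenius twist, are all correct.

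The gap is that the one genuinely hard step is not argued but delegated back to the very reference whose theorem you are trying to prove. Everything in your sketch that is actually carried out is the formal part; the decisive claim --- that the extended essentially finite sheaves, hence $\mathcal V$, are closed under subobjects and quotients inside $\Vect(\stX)$, so that $\mathcal V$ is abelian and Tannaka duality applies --- is exactly where pseudo-properness and the reduced Noetherian atlas enter, and you acknowledge you cannot supply it. Without it there is no gerbe $\Pi_{\mathcal V}$ and steps two and three have nothing to apply to. A second, smaller soft spot is the descent from $\overline k$ to $k$ in the universal-property step: $(G^{0})_{\red}$ need not be a subgroup scheme defined over an imperfect $k$, and ``characteristic'' only gives Galois descent over the separable closure; one must instead argue that (extended) essential finiteness can be tested after a base field extension (in the spirit of \ref{essentially finite and base change}) rather than descending the filtration itself. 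So the proposal is a correct roadmap of the argument in \cite{BV2}, but as a standalone proof it is incomplete at its central point.
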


\section{Trivializations by flat and surjective maps}
We fix a field $k$ of positive characteristic.
The aim of this section is to prove Theorem \ref{thmI} and Corollaries \ref{corI} and \ref{corII}. We start by showing how to trivialize essentially finite $F$-divided sheaves.

\begin{prop}\label{voucher for finite stacks}
 Let $\Gamma$ be a finite stack over $k$. Then
 \[
 \FDiv(\Gamma/k)\simeq \Vect(\Gamma_\et)\text{ and }\Fdiv_\infty(\Gamma/k)\simeq \Vect(\Gamma)
 \]
\end{prop}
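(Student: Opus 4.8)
The plan is to reduce everything to the maximal \'etale quotient $g\colon\Gamma\arr\Gamma_\et$, which for a finite stack $\Gamma$ is again finite and \'etale over $k$. First I would split off connected components, since $\FDiv(-/k)$, $\FDiv_\infty(-/k)$, $\Vect(-)$ and $(-)_\et$ all commute with finite disjoint unions, and so assume $\Gamma$ connected. Since the relative Frobenius of a stack \'etale over $k$ is an isomorphism, all transition maps in the limit defining $\FDiv(\Gamma_\et/k)$ are isomorphisms, so $\FDiv(\Gamma_\et/k)\arr\Vect(\Gamma_\et)$, $(E_i,\sigma_i)\mapsto E_0$, is an equivalence; composing its quasi-inverse with the pullback $\FDiv(\Gamma_\et/k)\arr\FDiv(\Gamma/k)$ gives a functor $g^*\colon\Vect(\Gamma_\et)\arr\FDiv(\Gamma/k)$, and similarly compatible functors into $\FDiv_\infty(\Gamma/k)$. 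The two assertions then read: $g^*\colon\Vect(\Gamma_\et)\arr\FDiv(\Gamma/k)$ is an equivalence, and the forgetful functor $\FDiv_\infty(\Gamma/k)\arr\Vect(\Gamma)$, $(\shF,\shG,\lambda)\mapsto\shF$, is an equivalence.

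The heart of the matter --- and the step I expect to be the main obstacle --- is a lemma saying that $g$ is killed by a bounded power of Frobenius: \emph{there is $n\in\N$ such that for every $i$ the $n$-fold relative Frobenius $\Gamma^{(i)}\arr\Gamma^{(i+n)}$ factors as $\Gamma^{(i)}\xrightarrow{g^{(i)}}(\Gamma_\et)^{(i)}\arr\Gamma^{(i+n)}$.} To prove this I would factor $g$ into the map on coarse spaces $\Spec\Hl^0(\odi\Gamma)\arr\Spec\Hl^0(\odi\Gamma)_\et$ and the rigidification along the infinitesimal part of the inertia, and reduce on an atlas to the two model cases $\Gamma=\Spec L$ with $L/k$ a finite field extension --- where the statement becomes $L^{p^n}\subseteq L_{\et,k}$ for $n\gg 0$ --- and $\Gamma=\sB_k G$ with $G$ a finite group scheme --- where it becomes $G^0\subseteq\ker F^n_{G/k}$ for $n\gg 0$, the identity component $G^0$ being infinitesimal of bounded height. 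This bookkeeping for general finite stacks is the only genuinely delicate point; everything afterwards is formal.

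Granting the lemma, I would deduce $g^*\colon\Vect(\Gamma_\et)\xrightarrow{\ \sim\ }\FDiv(\Gamma/k)$ thus. Essential surjectivity: for $\shE=(\shE_i,\sigma_i)\in\FDiv(\Gamma/k)$, telescoping the $\sigma$'s identifies $\shE_i$ with the pullback of $\shE_{i+n}$ along $\Gamma^{(i)}\arr\Gamma^{(i+n)}$, which by the lemma is the $g^{(i)}$-pullback of a bundle on $(\Gamma_\et)^{(i)}$; the $\sigma$'s likewise descend, so $\shE$ lies in the image of $g^*$. Full faithfulness: as $g^*$ is monoidal and $\underline{\Hom}(g^*A,g^*B)\cong g^*\underline{\Hom}(A,B)$, it suffices to check $\Hom_{\FDiv(\Gamma/k)}(\mathds1,g^*C)=\Hl^0(\Gamma_\et,C)$ for $C\in\Vect(\Gamma_\et)$; but a compatible family $s_i\in\Hl^0(\Gamma^{(i)},g^{(i)*}C_i)$ satisfies $s_i=(F^n)^*s_{i+n}$, which by the lemma is pulled back from $(\Gamma_\et)^{(i)}$, so the family is exactly an element of $\Hom_{\FDiv(\Gamma_\et/k)}(\mathds1,C)=\Hl^0(\Gamma_\et,C)$. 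This gives the first equivalence.

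For the second, faithfulness of $\FDiv_\infty(\Gamma/k)\arr\Vect(\Gamma)$ follows from faithfulness of $\FDiv(\Gamma/k)\arr\Vect(\Gamma)$ (Theorem \ref{thm for Fdiv}, Remark \ref{cases for thm for Fdiv}) and the definition of morphisms in $\FDiv_i$. For essential surjectivity, given $\shF\in\Vect(\Gamma)$ and $i\geq n$, writing the $i$-fold absolute Frobenius as the $i$-fold relative Frobenius followed by the projection $\Gamma^{(i)}\arr\Gamma$ and using that the former factors through $g$ (by the lemma), one gets $(F^i_\Gamma)^*\shF\cong g^*V$ for some $V\in\Vect(\Gamma_\et)$; taking $\shG$ to be the $g^*$-pullback of the object of $\FDiv(\Gamma_\et/k)$ corresponding to $V$ yields a triple $(\shF,\shG,\lambda)\in\FDiv_i(\Gamma/k)$ over $\shF$. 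Fullness is the same computation: for $i\geq n$ a morphism $\phi\colon\shF\arr\shF'$ has $(F^i_\Gamma)^*\phi$ equal to $g^*$ of a morphism on $\Gamma_\et$, which by the first equivalence lifts to a morphism of the associated $F$-divided sheaves, producing the morphism over $\phi$. Hence $\FDiv_\infty(\Gamma/k)\simeq\Vect(\Gamma)$. (Equivalently, the first assertion says that every object of $\FDiv(\Gamma/k)$ is essentially finite and that $\Pi^{\NN,\et}_{\Gamma/k}=\Gamma_\et$; one could instead derive it from Theorem \ref{thm for Fdiv}, but the direct argument seems cleanest.)
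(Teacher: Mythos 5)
Your proposal is correct and follows essentially the same route as the paper: the key factorization lemma you isolate (a bounded power of the relative Frobenius of $\Gamma$ factors through $\Gamma_\et$) is precisely \cite[Lemma 3.6]{TZ}, which the paper simply cites, and your subsequent deductions — the telescoping argument for $\FDiv(\Gamma/k)\simeq\Vect(\Gamma_\et)$ and the use of the factorization $F^i=\beta\circ\alpha$ with $\alpha\circ\beta=F^i_{\Gamma_\et}$ for the $\FDiv_\infty$ statement — match the paper's proof. The only caveat is that your sketch of that lemma (reduction ``on an atlas'' to $\Spec L$ and $\sB_kG$) is the one genuinely delicate step and is not fully carried out, but since you state the lemma correctly and everything downstream is sound, the argument stands.
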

\begin{proof}
 By \cite[Lemma 3.6]{TZ} we can find an index $i>0$ and $2$-commutative diagrams
   \[
  \begin{tikzpicture}[xscale=3.0,yscale=-1.2]
    \node (A0_0) at (0, 0) {$\Gamma$};
    \node (A0_1) at (1, 0) {$\Gamma^{(i)}$};
    \node (A1_0) at (0, 1) {$\Gamma_\et$};
    \node (A1_1) at (1, 1) {$\Gamma_\et^{(i)}$};
    \path (A0_0) edge [->]node [auto] {$\scriptstyle{}$} (A0_1);
    \path (A1_0) edge [->]node [auto] {$\scriptstyle{c}$} (A0_1);
    \path (A1_0) edge [->]node [auto] {$\scriptstyle{\simeq}$} (A1_1);
    \path (A0_0) edge [->]node [auto] {$\scriptstyle{\alpha}$} (A1_0);
    \path (A0_1) edge [->]node [auto] {$\scriptstyle{}$} (A1_1);
  \end{tikzpicture}
  \]
The arrow $c$ provides an equivalence of categories between $\Gamma^{(\infty)}:=\varinjlim_{i\in\N}\Gamma^{(i)}$ and $\Gamma_\et^{(\infty)}:=\varinjlim_{i\in\N}\Gamma_\et^{(i)}$. Thus we have $\FDiv(\Gamma)=\Vect(\Gamma^{(\infty)})=\Vect(\Gamma_\et^{(\infty)})=\Vect(\Gamma_\et)$ (see \cite[\S 6.2, Definition 6.15 and Proposition 6.16]{TZ}).


Considering the factorization $F^i\colon \Gamma\arrdi\alpha \Gamma_{\et} \arrdi\beta \Gamma$, where $F$ is the absolute Frobenius of $\Gamma$, we get a functor $\Psi:\Vect(\Gamma)\arr \FDiv_\infty(\Gamma)$ mapping $V\in \Vect(\Gamma)$ to $(V,\beta^*V,v)\in \FDiv_i(\Gamma)$, where $v\colon \alpha^*(\beta^*V)\arr F^{i*}V$ is the canonical morphism. Here we identify the forgetful functor $\FDiv(\Gamma)\arr \Vect(\Gamma)$ with $\Vect(\Gamma_\et)\arrdi{\alpha^*}\Vect(\Gamma)$.
Since $\Psi$ is a section of the forgetful functor $\Phi\colon \FDiv_\infty(\Gamma)\arr \Vect(\Gamma)$, we conclude that $\Phi$ is essentially surjective. 

Let's prove that it is fully faithful and let $\chi=(V,W,u)$, $\chi'=(V',W',u')\in \FDiv_j(\Gamma)$ and $\lambda\colon V\arr V'$ be a map. 
Via $u,u'$ we obtain a map $\delta\colon\alpha^*W\arr\alpha^*W'$: an arrow $\chi\arr\chi'$ mapping to $\lambda$ is a pair $(\lambda,W\arrdi{\hat\delta}W')$ with $\alpha^*\hat\delta=\delta$.  Since $\alpha^*$ is faithful it follows that $\Phi$ is faithful. To prove that it is full, we must prove that there exists $l>0$ such that $F^{l*}\delta\colon \alpha^*F_{\Gamma_\et}^{l*}W\arr \alpha^*F_{\Gamma_\et}^{l*}W'$, where $F_{\Gamma_\et}$ is the absolute Frobenius of $\Gamma_\et$, comes from a map $\gamma\colon F_{\Gamma_\et}^{l*}W\arr F_{\Gamma_\et}^{l*}W'$. 
Since the composition $\Gamma_\et\arrdi\beta\Gamma\arrdi\alpha\Gamma_\et$ coincides with $F_{\Gamma_\et}^i$, it is enough to set $l=i$ and $\gamma=\beta^*\delta$.
\end{proof}

\begin{proof}[Proof of Theorem \ref{thmI}, first part]
We consider first the case of $\Fdiv(\stX)$. So let $\E\in \FDiv(\stX)$ which is essentially finite and denotes by $\Gamma$ the monodromy gerbe of $\E$. 
In particular there are maps $\stX\arr\Pi_{\Fdiv(\stX)}\arrdi\alpha\Gamma$ and $V\in \Vect(\Gamma)$ such that $\alpha^*V\simeq \E$.
The profinite quotient of $\Pi_{\FDiv(\stX)}$ is $\Pi^{\NN,\et}_\stX$ by \ref{thm for Fdiv} and therefore there is also a factorization $\alpha\colon \Pi_{\FDiv(\stX)}\arr\Pi^{\NN,\et}_\stX\arr\Gamma$. In particular $\Gamma$ is a finite and \'etale gerbe.
We have a $2$-commutative diagram
  \[
  \begin{tikzpicture}[xscale=2.0,yscale=-1.2]
    \node (A0_0) at (0, 0) {$\stX$};
    \node (A0_1) at (1, 0) {$\Pi_{\Fdiv(\stX)}$};
    \node (A0_2) at (2, 0) {$\Pi^{\NN,\et}_\stX$};
    \node (A1_0) at (0, 1) {$\Gamma$};
    \node (A1_1) at (1, 1) {$\Pi_{\FDiv(\Gamma)}$};
    \node (A1_2) at (2, 1) {$\Pi^{\NN,\et}_\Gamma$};
    \path (A0_1) edge [->,dashed]node [auto,swap] {$\scriptstyle{\alpha}$} (A1_0);
    \path (A0_0) edge [->]node [auto] {$\scriptstyle{}$} (A0_1);
    \path (A0_1) edge [->]node [auto] {$\scriptstyle{}$} (A0_2);
    \path (A1_0) edge [->]node [auto] {$\scriptstyle{a}$} (A1_1);
    \path (A1_1) edge [->]node [auto] {$\scriptstyle{}$} (A1_2);
    \path (A0_2) edge [->]node [auto] {$\scriptstyle{}$} (A1_2);
    \path (A0_0) edge [->]node [auto] {$\scriptstyle{}$} (A1_0);
    \path (A0_1) edge [->]node [auto] {$\scriptstyle{b}$} (A1_1);
  \end{tikzpicture}
  \]
and, by \ref{voucher for finite stacks}, the lower horizontal arrows are equivalences. Using the universal property of $\Pi^{\NN,\et}_\stX$ one can conclude that $\alpha$ fits in the commutative diagram. Thus $\alpha^*V\simeq b^*(a_*V)$, that is there exists an $F$-divided sheaf on $\Gamma$ pulling back to $\E$ along $\stX\arr \Gamma$.
  Take any finite separable extension $L/k$ with a map $\Spec L\arr \Gamma$. As by \ref{voucher for finite stacks},  $\FDiv(L/k)\cong\Vect(L)$, $\sE$ becomes trivial on $\sX\times_{\Gamma} \Spec(L)$. But the map $\sX\times_{\Gamma}\Spec(L)\arr \sX$, as a pullback of $\Spec(L)\arr\Gamma$, is a finite \'etale cover.

The case of $\FDiv_\infty$ is analogous, just replace $\Pi^{\NN,\et}_\stX$ with $\Pi^\NN_\stX$.
\end{proof}

We now concentrate on the converse problem in Theorem \ref{thmI}.
\begin{proof}[Proof of Theorem \ref{thmI}, second part]
 By \ref{thm for Fdiv} it is enough to consider the case of $\Fdiv$.
 The category $\Fdiv(\sX/k)_f$ is a sub Tannakian category of $\Fdiv(\sX/k)$ because in a Tannakian category any subobject or quotient of a trivial object is trivial, $\Fdiv(\sU/k)=\prod_{i\in I}\Fdiv(\sU_i/k)$, where each $\sU_i$ is a connected component of $\sU$ and, for all $i$, $\Fdiv(\sU_i/k)$ is Tannakian over $\Hl^0(\sO_{\sU_{i}})_{\eet}$ by \ref{thm for Fdiv}.
 
  For all finite extensions of fields $l/k$ we have the following 2-commutative diagram.
  \[
  \begin{tikzpicture}[xscale=4.0,yscale=-1.4]
    \node (A0_0) at (0, 0) {$\Pi_{\Fdiv(\sX\times_kl/l)}$};
    \node (A0_1) at (1, 0) {$\Pi_{\Fdiv(\sX)}\times_kl$};
    \node (A1_0) at (0, 1) {$\Pi_{\Fdiv(\sX\times_kl/l)_{f\times_k l}}$};
    \node (A1_1) at (1, 1) {$\Pi_{\Fdiv(\sX)_f}\times_kl$};
    \path (A0_0) edge [->]node [auto] {$\scriptstyle{c}$} (A0_1);
    \path (A1_0) edge [->]node [auto] {$\scriptstyle{a}$} (A1_1);
    \path (A0_1) edge [->>]node [auto] {$\scriptstyle{b}$} (A1_1);
    \path (A0_0) edge [->>]node [auto] {$\scriptstyle{d}$} (A1_0);
  \end{tikzpicture}
  \]
 By \ref{base change for Fdiv}, $c$ is an isomorphism. So, to prove that $\Pi_{\Fdiv(\sX/k)_f}$ is finite and \'etale, we may replace $k$ by a finite field extension. In particular we can assume that all connected components of $\stU$, $\stU\times_\stX \stU$, $\stU\times_\stX\stU\times_\stX\stU$ are geometrically connected or, using \ref{thm for Fdiv}, that the rings $\Hl^0(\odi{\stU})_\et$, $\Hl^0(\odi{\stU\times_\stX\stU})_\et$ and $\Hl^0(\odi{\stU\times_\stX\stU\times_\stX\stU})_\et$ are product of copies of $k$. In all the cases considered we can also assume that $\stU$ is geometrically connected.

Since $\Fdiv$ is an fpqc stack, $\Fdiv(\sX/k)_f$ is equivalent to the category of trivial objects $\sO_{\sU}^{\oplus n}\in\Fdiv(\sU/k)$ equipped with descent data, i.e. an isomorphism between the two pull-backs of the trivial object along $\sU\times_{X}\sU\rightrightarrows\sU$ satisfying the cocycle condition.

Denotes by $I,J$ the set of connected components of $\stU\times_\stX\stU$ and $\stU\times_\stX\stU\times_\stX\stU$ respectively.
Restricting to each connected components and using \ref{thm for Fdiv}, we see that the pullback functors 
$$\Vect(\Spec(\Hl^0(\sO_{\sU})_{\text{\'et}})\simeq\Fdiv(\Spec(\Hl^0(\sO_{\sU})_{\text{\'et}})/k)\arr \Fdiv(\sU/k)$$  
$$\Vect(\Spec(\Hl^0(\sO_{\sU\times_{\sX}\sU})_{\text{\'et}}))\simeq\Fdiv(\Spec(\Hl^0(\sO_{\sU\times_{\sX}\sU})_{\text{\'et}})/k)\arr \Fdiv((\sU\times_{\sX}\sU)/k)$$ 
$$\Vect(\Spec(\Hl^0(\sO_{\sU\times_{\sX}\sU\times_{\sX}\sU})_{\text{\'et}}))\simeq\Fdiv(\Spec(\Hl^0(\sO_{\sU\times_{\sX}\sU\times_{\sX}\sU})_\et)/k)\arr \Fdiv(\sU\times_{\sX}\sU\times_{\sX}\sU/k)$$
are fully faithful. Thus $\FDiv(\stX/k)_f$ is equivalent to the category of vector bundles $V$ on $\Spec(\Hl^0(\sO_{\sU})_{\et})=\Spec k$ with an isomorphism between the two pullbacks of $V$ to $\Spec(\Hl^0(\sO_{\sU\times_{\sX}\sU})_{\eet})=\Spec k^I$ satisfying the cocycle condition in $\Spec(\Hl^0(\sO_{\sU\times_{\sX}\sU\times_{\sX}\sU})_\et)=\Spec k^J$.

So $V$ is a $k$-vector space and the data of the isomorphism between the pullbacks is a collection of automorphisms $\sigma_Z\colon V\arr V$ for all $Z\in I$. Denote by $\pr_{ij}\colon \stU\times_\stX\stU\times_\stX\stU\arr \stU\times_\stX\stU$ and $\pr_i\colon \stU\times_\stX\stU \arr \stU$ the projections. Given $D\in J$ we denote by $Z_{ij,D}\in I$ the unique connected component containing $\pr_{ij}(D)$. It is easy to see that the cocycle conditions on the collection $(\sigma_Z)_{Z\in I}$ translates into the relation
\[
\sigma_{Z_{13,D}}=\sigma_{Z_{23,D}}\circ \sigma_{Z_{12,D}} \text{ for all }D\in J
\]
If $G$ is the quotient of the free non-abelian group over the symbols $(e_Z)_{Z\in I}$ by the relations $e_{Z_{13,D}}^{-1}e_{Z_{23,D}}e_{Z_{12,D}}$ for $D\in J$, it follows that $\FDiv(\stX/k)_f\simeq \Rep_k G$.
If $f$ is geometrically connected then $|I|=|J|=1$ and therefore $G$ is trivial as required. So assume that either $f$ is proper or $\stU$ is geometrically irreducible.
We are going to show that $|G|\leq |I|$.

Consider the Cartesian diagram
$$\xymatrix{\sU\times_\sX\sU\times_\sX\sU\ar[r]^-{\pr_{23}}\ar[d]^-{\pr_{12}}&\sU\times_\sX\sU\ar[d]^-{\pr_1}\\\sU\times_\sX\sU\ar[r]^-{\pr_2}& \sU}$$
Given $Z,W\in I$ we have $\pr_1(Z)\cap\pr_2(W)\neq \emptyset$: if $f$ is proper then $\pr_1(Z)$ and $\pr_2(W)$ are open and closed and thus equal to $\stU$, if $\stU$ is irreducible it is the intersection of two non-empty open subsets. In particular there exists $D\in J$ such that $Z_{12,D}=W$, $Z_{23,D}=Z$ and therefore $e_Ze_W=e_{Z_{13,D}}$ in $G$. Thus it remains to show that for all $Z\in I$ there exists $W$ such that $e_Z^{-1}=e_W$ in $G$.

Notice that the diagonal in $J$ determines the relation $e_\Delta=e_\Delta^2$, where $\Delta\in I$ is the diagonal. Thus $e_\Delta=1$ in $G$. Now consider the Cartesian diagram
$$\xymatrix{\sU\times_\sX\sU\times_\sX\sU\ar[r]^-{\pr_{13}}\ar[d]^-{\pr_{12}}&\sU\times_\sX\sU\ar[d]^-{\pr_1}\\\sU\times_\sX\sU\ar[r]^-{\pr_1}& \sU}$$
Since $\pr_1(Z)\subseteq \pr_1(\Delta)=\stU$, there exists $D\in J$ such that $Z_{12,D}=Z$ and $Z_{13,D}=\Delta$. Thus $e_{Z_{23,D}}e_Z=e_\Delta=1$ as required.
\end{proof}

\begin{rmk}
In the above proof the crucial property of $\Fdiv$ we used is that it is a stack in the fppf topology. Whether this property is true also for $\Strat$ and $\Crys$ is unclear.
\end{rmk}




\begin{lem}\label{ess finite descends along Frobenius}
Let $\stX$ be an inflexible and pseudo-proper fiber category over $k$, $V\in \Vect(\stX)$ and denote by $F\colon \stX\arr\stX$ the absolute Frobenius. If there exists $m\in \N$ such that $F^{m*}V\in \Vect(\stX)$ is essentially finite then $V$ is essentially finite too.
\end{lem}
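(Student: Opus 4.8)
The plan is to show that $V$ is the underlying vector bundle of an essentially finite object of $\FDiv_\infty(\stX/k)$, built by reusing --- after a Frobenius twist --- the $F$-divided structure carried by $W:=F^{m*}V$. Put $\Pi:=\Pi^\NN_{\stX/k}$, which exists since $\stX$ is inflexible. Because $\stX$ is inflexible and pseudo-proper, Theorem \ref{monodromy essentially finite} gives an equivalence $\Vect(\Pi)\simeq \EF(\Vect(\stX))$ via pullback along $\pi\colon\stX\arr\Pi$, and Theorem \ref{thm for Fdiv} gives an equivalence $\Vect(\Pi)\simeq \EF(\FDiv_\infty(\stX/k))$ (here $L_\infty=k$, precisely because $\stX$ is inflexible); both are compatible with the forgetful functors down to $\Vect(\stX)$, all of which are "pullback along $\pi$". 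Consequently a vector bundle on $\stX$ is essentially finite if and only if it is the image, under the forgetful functor $\FDiv_\infty(\stX/k)\arr\Vect(\stX)$, of an essentially finite object of $\FDiv_\infty(\stX/k)$.

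By hypothesis $W$ is essentially finite, so by the above, and since $\EF(\FDiv_\infty(\stX/k))\simeq\varinjlim_i\EF(\FDiv_i(\stX/k))$ (Theorem \ref{thm for Fdiv}), there are $j\in\N$ and an essentially finite object $\xi=(\shF_\xi,\shG_\xi,\lambda_\xi)\in\FDiv_j(\stX/k)$ with $\shF_\xi\cong W$; here $\shG_\xi\in\FDiv(\stX/k)$ and $\lambda_\xi\colon F^{j*}\shF_\xi\xrightarrow{\sim}(\shG_\xi)_0$. By the criterion in Theorem \ref{thm for Fdiv} --- an object of $\FDiv_i(\stX/k)$ is essentially finite if and only if its $\FDiv$-part is essentially finite in $\FDiv(\stX/k)$ --- the sheaf $\shG_\xi$ is essentially finite in $\FDiv(\stX/k)$. (Concretely, $\xi$ lives on a finite gerbe quotient $\Gamma$ of $\Pi$ and $\shG_\xi$ is pulled back from $\FDiv(\Gamma/k)\simeq\Vect(\Gamma_\et)$, whose objects are all essentially finite, cf. Proposition \ref{voucher for finite stacks}; but one does not need this description.)

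Now I reuse $\shG_\xi$ to lift $V$ itself: set
\[
\zeta:=(V,\shG_\xi,\mu)\in\FDiv_{m+j}(\stX/k),
\]
where $\mu\colon F^{(m+j)*}V=F^{j*}W\cong F^{j*}\shF_\xi\xrightarrow{F^{j*}\lambda_\xi}(\shG_\xi)_0$, the middle isomorphism being $F^{j*}$ applied to $W\cong\shF_\xi$. This is a legitimate object of $\FDiv_{m+j}(\stX/k)$ since $F^{(m+j)*}V\cong(\shG_\xi)_0$ is a vector bundle, and its $\FDiv$-part is the essentially finite sheaf $\shG_\xi$; hence $\zeta\in\EF(\FDiv_{m+j}(\stX/k))\subseteq\EF(\FDiv_\infty(\stX/k))$ by the same criterion. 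The image of $\zeta$ under the forgetful functor $\FDiv_\infty(\stX/k)\arr\Vect(\stX)$ is $V$, so by the first paragraph $V$ is essentially finite.

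The main obstacle --- really the only non-formal point --- is the compatibility of the forgetful functors with the Tannakian equivalences of Theorems \ref{thm for Fdiv} and \ref{monodromy essentially finite}, which is what makes "essentially finite" pass correctly between $\FDiv_\infty(\stX/k)$, $\FDiv(\stX/k)$ and $\Vect(\stX)$. One must also take some care that the essential-finiteness criterion of Theorem \ref{thm for Fdiv} is invoked in the right generality (reducing to $\stX$ reduced if necessary, $\FDiv$ being insensitive to nilpotent thickenings, see \ref{insensible to nilpotent thickenning}) and that the absolute Frobenius, which is only a $k$-morphism up to the twist by $F_k$, is dealt with through the "separably generated up to Frobenius" formalism recalled in Section 1. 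Once these are in place, the proof is exactly the Frobenius-twist bookkeeping above.
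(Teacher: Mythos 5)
Your strategy is genuinely different from the paper's, and as written it has a real gap: it proves the lemma only under hypotheses strictly stronger than those in the statement. Theorem \ref{thm for Fdiv}, which carries all the weight in your argument, requires $\stX$ to be a connected algebraic stack with a Noetherian fpqc atlas whose residue fields are separable up to Frobenius over $k$, and its assertions about $\FDiv_i$, $\FDiv_\infty$ and the criterion ``$(\shF,\shG,\lambda)$ is essentially finite iff $\shG$ is'' are stated only for $\stX$ \emph{reduced}. The lemma, however, is for an arbitrary inflexible pseudo-proper fibered category, and in the paper it is applied (in Corollary \ref{corI}) to algebraic stacks of finite type that are not assumed reduced. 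Your proposed escape route --- ``reducing to $\stX$ reduced via \ref{insensible to nilpotent thickenning}'' --- does not work here: insensitivity to nilpotent thickenings is a property of $\FDiv$, not of $\Vect$ or of $\EF(\Vect(-))$; essential finiteness of a vector bundle is not known to descend along $\stX_\red\arr\stX$ (the Nori gerbe genuinely changes under infinitesimal thickenings in characteristic $p$), so proving the statement on $\stX_\red$ does not give it on $\stX$. A second, smaller issue is the one you flag yourself: the identification of the composite $\EF(\FDiv_\infty(\stX/k))\arr\Vect(\stX)$ with pullback along $\stX\arr\Pi^\NN_{\stX/k}$ under the two Tannakian equivalences is used in both directions (to lift $W$ to $\xi$ and to descend from $\zeta$ to $V$) but is nowhere established; the paper never states this compatibility. (Also, the arrow in your definition of $\mu$ should be $\lambda_\xi$ itself, not $F^{j*}\lambda_\xi$, since $\lambda_\xi$ already has source $F^{j*}\shF_\xi$; this is only a notational slip.)

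For contrast, the paper's proof is elementary and works in full generality: it encodes $V$ as an $\F_p$-map $v\colon\stX\arr\sB_{\F_p}(\GL_{n,\F_p})$, notes that $F^{m*}V$ corresponds to $\overline F^{\,m}\circ v$ where $\overline F$ is induced by the Frobenius of $\GL_{n,\F_p}$, and uses that this Frobenius is a surjective homomorphism with finite kernel: hence the finite gerbe $\Gamma$ through which $\overline F^{\,m}\circ v$ factors (by \ref{monodromy essentially finite}) pulls back along $\overline F^{\,m}$ to a finite gerbe through which $v$ itself factors. No $F$-divided sheaves, no algebraicity, finiteness, or reducedness assumptions are needed. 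If you want to salvage your approach, you would have to either restrict the lemma to reduced algebraic stacks of finite type (and then separately handle the non-reduced case needed for Corollary \ref{corI}) or supply the missing compatibility and descent arguments; the direct argument via $\sB\GL_n$ is considerably cheaper.
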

\begin{proof}
 We can consider the case $m=1$ only. Set $n=\rk V$. The vector bundle $V$ is given by an  $\F_p$-map $v\colon \stX\arr \sB_{\F_p}(\GL_{n,\F_p})$: the stack $\sB_{\F_p}(\GL_{n,\F_p})$ has a universal vector bundle $\E$ of rank $n$ such that $v^*\E\simeq V$. By  \ref{monodromy essentially finite} $V$ is essentially finite if and only if $v$ factors as $\stX\arrdi\phi \Gamma \arr \sB_{\F_p}(\GL_{n,\F_p})$ where $\Gamma$ is a finite $k$-gerbe and $\phi$ is $k$-linear. The vector bundle $F^*V$ corresponds to the composition $\stX\arrdi v \sB_{\F_p}(\GL_{n,\F_p})\arrdi{\overline F} \sB_{\F_p}(\GL_{n,\F_p})$, where $\overline F$ is the absolute Frobenius of $\sB_{\F_p}(\GL_{n,\F_p})$. Thus we have a diagram
   \[
  \begin{tikzpicture}[xscale=2.0,yscale=-1.2]
    \node (A0_0) at (0, 0) {$\stX$};
    \node (A0_1) at (1, 0) {$\Delta$};
    \node (A0_2) at (2, 0) {$\sB_{\F_p}(\GL_{n,\F_p})$};
    \node (A1_1) at (1, 1) {$\Gamma$};
    \node (A1_2) at (2, 1) {$\sB_{\F_p}(\GL_{n,\F_p})$};
    \path (A0_0) edge [->]node [auto] {$\scriptstyle{}$} (A0_1);
    \path (A0_0) edge [swap,->]node [auto] {$\scriptstyle{\phi}$} (A1_1);
    \path (A0_1) edge [->]node [auto] {$\scriptstyle{}$} (A1_1);
    \path (A0_2) edge [->]node [auto] {$\scriptstyle{\overline F}$} (A1_2);
    \path (A1_1) edge [->]node [auto] {$\scriptstyle{}$} (A1_2);
    \path (A0_0) edge [->,bend right=40]node [auto] {$\scriptstyle{v}$} (A0_2);
    \path (A0_1) edge [->]node [auto] {$\scriptstyle{}$} (A0_2);
  \end{tikzpicture}
  \]
where $\Gamma$ is a finite $k$-gerbe and the square is $2$-Cartesian. We conclude by showing that $\Delta$ is a finite gerbe over $k$.

The map $\overline F\colon \sB_{\F_p}(\GL_{n,\F_p})\arr \sB_{\F_p}(\GL_{n,\F_p})$ is induced by the Frobenius of $\GL_{n,\F_p}$. Since this last map is a surjective group homomorphism with finite kernel it follows that $\overline F$ and therefore $\Delta\arr \Gamma$ is a finite relative gerbe. This plus the assumption that $\Gamma$ is a finite gerbe implies that $\Delta$ is a finite gerbe.
\end{proof}

\begin{proof}[Proof of Corollary \ref{corI}]
 If $V$ is an essentially finite vector bundle on $\stX$ then, by \ref{monodromy essentially finite}, there exist $\psi\colon \stX\arr \Gamma$, where $\Gamma$ is a finite gerbe and $W\in \Vect(\Gamma)$ such that $\psi^*W\simeq V$. If $\Spec L\arr \Gamma$ is any map from a finite field extension of $k$ then $f\colon \stU=\stX\times_\Gamma \Spec L\arr \stX$ is a finite and flat map and the pullback along this map of $V$ is free. Since $\stU\times_\stX\stU\arr\stX$ is finite and flat, the pushforward of the structure sheaf is locally free on $\stX$ and therefore the set of its global sections form a finite dimensional $k$-vector space.
 
 Consider now a proper, flat and surjective map $f\colon \stU\arr \stX$ as in the statement and $V\in \Vect(\stX)$ such that $f^*V$ is free. We are first going to extend $V$ to some object in $\FDiv_i(\stX/k)$ trivialized by $f$ for some $i>0$.
 If $\stZ$ is a stack of finite type over $k$ and we apply \ref{thm for Fdiv} on the connected components we see that $\FDiv(\stZ/k)\arr \Vect(\stZ)$ is faithful and the set of endomorphisms of the unit object of $\FDiv(\stZ/k)$ is identified with $\Hl^0(\odi\stZ)_\et$. In particular for all $i$ we have 
 \[
\End_{\FDiv_i(\stZ/k)}(\mathds{1})=\{x\in \Hl^0(\odi\stZ)\st x^{p^i}\in \Hl^0(\odi\stZ)_\et\}\subseteq \Hl^0(\odi\stZ)
\]
via the functor $\FDiv_i(\stZ/k)\arr \Vect(\stZ)$. In particular if $\Hl^0(\odi\stZ)$ is a finite $k$-algebra the above inclusion is an equality for $i$ big enough, which also means that the functor $\FDiv_i(\stZ/k)\arr \Vect(\stZ)$ restricted to the full subcategory of free objects is fully faithful. 

By fppf descent along $f\colon\stU\arr \stX$ the vector bundle $V$ is given by a free object of rank $\rk V=r$ on $\stU$ with an isomorphism of the two pullbacks in $\stU\times_\stX\stU$ satisfying the cocycle condition of the triple product. By discussion above for $i$ big enough this also determines a descent data on the free object of rank $r$ in $\FDiv_i(\stU/k)$. Since $\FDiv_i$ and $\Vect$ are stacks in the fppf topology there exists $E\in \FDiv_i(\stX/k)$ of the form  $(V,W,\lambda)$ with $W\in \FDiv(\stX/k)$  such that $f^*E\in \FDiv_i(\stU/k)$ is trivial. By the definition of $\FDiv_i(\stX/k)$, $W_0\simeq F^{i*}V$, where $F$ is the absolute Frobenius of $\stX$, and $f^*W$ is trivial in $\FDiv(\stU/k)$. By \ref{ess finite descends along Frobenius} and Theorem \ref{thmI} we can conclude that $V$ is essentially finite.
\end{proof}

\begin{lem}\label{additive torsors}
 Let $\stX$ be an algebraic stack over a field $k$ of positive characteristic such that $\Hl^1(\sX,\odi\stX)$ is a finite dimensional $k$-vector space. Then any $\G_a^r$-torsor over $\stX$ comes from a torsor under a finite subgroup scheme of $\G_a^r$
\end{lem}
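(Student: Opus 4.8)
The plan is to reduce the statement to a piece of linear algebra over the finite-dimensional $k$-vector space $V:=\Hl^1(\stX,\odi\stX)$. First I would pass to the case $r=1$. A $\G_a^r$-torsor on $\stX$ is classified by a class $(\alpha_1,\dots,\alpha_r)\in\Hl^1_{\mathrm{fppf}}(\stX,\G_a^r)=\Hl^1(\stX,\odi\stX)^{\oplus r}$ (for $\G_a$ smooth and affine, the fppf cohomology of the sheaf it represents is the quasi-coherent cohomology of $\odi\stX$). If each $\alpha_i$ lies in the image of $\Hl^1(\stX,H_i)\arr\Hl^1(\stX,\G_a)$ for some finite subgroup scheme $H_i\le\G_a$, then $H:=H_1\times\cdots\times H_r\le\G_a^r$ is finite and $(\alpha_1,\dots,\alpha_r)$ lies in the image of $\Hl^1(\stX,H)\arr\Hl^1(\stX,\G_a^r)$; that is, the torsor is the pushforward of an $H$-torsor. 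So it suffices to show: for every $\alpha\in V$ there is a finite subgroup scheme $H\le\G_a$ with $\alpha\in\im(\Hl^1(\stX,H)\arr V)$.

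For this, given a nonzero additive polynomial $\psi(x)=\sum_{j=0}^{n}c_jx^{p^{j}}\in k[x]$, the induced homomorphism $\psi\colon\G_a\arr\G_a$ is finite locally free (since $\psi(x)-y$ is a unit times a monic polynomial in $x$ over $k[y]$), hence faithfully flat, and $\ker\psi=\Spec k[x]/(\psi)$ is a finite subgroup scheme of $\G_a$. The short exact sequence of fppf abelian sheaves $0\arr\ker\psi\arr\G_a\arrdi{\psi}\G_a\arr0$ gives, via the long exact cohomology sequence,
\[
\im\bigl(\Hl^1(\stX,\ker\psi)\arr\Hl^1(\stX,\G_a)\bigr)=\ker\bigl(\psi_*\colon V\arr V\bigr).
\]
Writing $\Phi\colon V\arr V$ for the additive $p$-semilinear operator induced by $\odi\stX\arr\odi\stX$, $a\mapsto a^{p}$ (equivalently: pushforward of $\G_a$-torsors along the Frobenius of $\G_a$), one has $\psi_*=\sum_{j=0}^{n}c_j\Phi^{j}$, since pushforward of torsors is additive in the homomorphism, scalar multiplications induce scalar multiplications, and $x\mapsto x^{p^{j}}$ induces $\Phi^{j}$.

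Finally I would produce $\psi$, and this is the only step using the hypothesis: since $d:=\dim_k V<\infty$, for a fixed $\alpha\in V$ the $d+1$ vectors $\alpha,\Phi(\alpha),\dots,\Phi^{d}(\alpha)$ are $k$-linearly dependent, so $\sum_{j=0}^{d}c_j\Phi^{j}(\alpha)=0$ for some $c_0,\dots,c_d\in k$ not all zero. Then $\psi(x):=\sum_{j=0}^{d}c_jx^{p^{j}}$ is a nonzero additive polynomial, $H:=\ker\psi\le\G_a$ is a finite subgroup scheme, and the two displays above give $\alpha\in\im(\Hl^1(\stX,H)\arr V)$; hence the $\G_a$-torsor of class $\alpha$ comes from an $H$-torsor, which combined with the first reduction proves the lemma. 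The points I would need to handle carefully are the identification $\Hl^1_{\mathrm{fppf}}(\stX,\G_a)\cong\Hl^1(\stX,\odi\stX)$ and the exactness of $0\arr\ker\psi\arr\G_a\arr\G_a\arr0$ on the fppf site of the algebraic stack $\stX$, both of which are standard; the genuine content is the elementary last step with the semilinear operator $\Phi$.
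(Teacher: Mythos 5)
Your proposal is correct and follows essentially the same route as the paper: both reduce to $r=1$, observe that the $p$-semilinear Frobenius operator on the finite-dimensional space $\Hl^1(\stX,\odi\stX)$ must satisfy a nonzero additive polynomial relation on any given class, and then use the resulting isogeny $\psi\colon\G_a\arr\G_a$ (surjective with finite kernel) to descend the torsor to $\ker\psi$. The only cosmetic difference is that you phrase the last step via the long exact fppf cohomology sequence, whereas the paper phrases it as the torsor becoming trivial under $\sB_k\G_a\arr\sB_k\G_a$.
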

\begin{proof}
 We can assume $r=1$. We interpret $\Hl^1(\odi\stX)$ as the set of isomorphism classes of $\G_a$-torsors over $\stX$. If $\phi\colon \G_a\arr \G_a$ is a group homomorphism then it induces a map of sets $\Lambda(\phi)\colon \Hl^1(\odi\stX)\arr \Hl^1(\odi\stX)$. Thus we obtain a map 
 \[
 \Lambda\colon \End_{\text{groups}}(\G_a) \arr \End_{\text{\sets}}(\Hl^1(\odi\stX))
 \]
 Both sides are left $k$-algebras and, by the definition of the $k$-vector space structure on $\Hl^1(\odi\stX)$, $\Lambda$ is a morphism of $k$-algebras.
 
 Consider the relative Frobenius $F\colon \G_a\arr \G_a$, that is $F(x)=x^p$ functorially and $R\in k[y]$. The map $R(F)\colon \G_a\arr \G_a$ is a group homomorphism and, since $\G_a$ is connected and reduced, $R(F)$ is also surjective unless $R=0$ and in this case its kernel is finite. If $R(y)=\sum_j \lambda_jy^j$ and $v\in \Hl^1(\odi\stX)$ then
 \[
 \Lambda(R(F))v=\sum_j \lambda_j \Lambda(F)^j (v)
 \]
 Since $\Hl^1(\odi\stX)$ is a finite dimensional $k$-vector space, the vectors $v$, $\Lambda(F)(v)$, $\Lambda(F)^2(v)$ and so on must be eventually linearly dependent. Thus there exists a non zero polynomial $R\in k[y]$ such that $\Lambda(R(F))v=0$. This means that the $\G_a$-torsor $v$ become trivial under the map $\sB_k\G_a\arr\sB_k\G_a$ induced by $R(F)$. Since $R(F)$ is surjective, it follows that $v$ comes from a torsor under the finite group scheme $\Ker(R(F))$.
\end{proof}

\begin{lem}\label{extensions of O}
 Let $\stX$ be an algebraic stack over a field $k$ of positive characteristic such that $\dim_k \Hl^1(\sX,E) <\infty$ for all vector bundles $E$ on $\stX$. Let 
 \[
 \shG_0 \arr \shG_1 \arr \cdots \arr \shG_{N-1} \arr \shG_N=0
 \]
 be a sequence of surjective maps of quasi-coherent sheaves on $\stX$ such that $\Ker(\shG_{l-1} \arr \shG_l)$ is free of finite rank for $1\leq l \leq N$. Then there exists a finite flat surjective map $f\colon \stX'\arr \stX$ such that $f^*\shG_l$ is free of finite rank for all $l$.
\end{lem}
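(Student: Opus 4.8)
The plan is to induct on the length $N$ of the filtration, stripping off one sheaf at a time from the top of the sequence and invoking Lemma \ref{additive torsors} at each stage --- but only once the relevant extension class has been turned into a class in the cohomology of a \emph{free} sheaf. Note that one cannot simply feed the whole filtration into Lemma \ref{additive torsors}: the sheaves $\shG_l$ are not free before base change, so the relevant $\Ext$-groups are a priori cohomology of non-free sheaves, and it is precisely to fix this that the recursion is needed.

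For $N\leq 1$ the sheaf $\shG_0=\Ker(\shG_0\arr\shG_1)$ is already free and one takes $f=\id$. For the inductive step, observe that the truncated sequence $\shG_1\arr\shG_2\arr\cdots\arr\shG_N=0$ still satisfies the hypotheses, so by the inductive hypothesis there is a finite, flat and surjective $g\colon\stY\arr\stX$ with $g^*\shG_l$ free of finite rank for $1\leq l\leq N$; here one uses that $g$ is flat, hence $g^*$ is exact and carries the truncated sequence, with its free kernels, to one of the same shape. One must also check that $\stY$ again satisfies the running hypothesis, so that Lemma \ref{additive torsors} is applicable on $\stY$ below: for a vector bundle $E$ on $\stY$ one has $\Hl^1(\stY,E)=\Hl^1(\stX,g_*E)$ since $g$ is affine, and $g_*E$ is a vector bundle on $\stX$ since $g$ is finite and flat, so $\dim_k\Hl^1(\stY,E)<\infty$.

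It remains to trivialize $g^*\shG_0$. Pulling back $0\arr\Ker(\shG_0\arr\shG_1)\arr\shG_0\arr\shG_1\arr 0$ along $g$ (flatness again) yields a short exact sequence $0\arr\shA\arr g^*\shG_0\arr\shB\arr 0$ on $\stY$ in which both $\shA=g^*\Ker(\shG_0\arr\shG_1)$ and $\shB=g^*\shG_1$ are free, say of ranks $b$ and $a$. Its extension class then lies in $\Ext^1_\stY(\shB,\shA)\cong\Hl^1(\stY,\odi\stY)^{\oplus ab}$, which we read as a $\G_a^{ab}$-torsor on $\stY$. By Lemma \ref{additive torsors} this torsor is induced from a torsor $h\colon P\arr\stY$ under a finite subgroup scheme $H\subseteq\G_a^{ab}$; hence $h$ is finite, flat and surjective, and, being induced from $P$, the torsor becomes trivial after pullback to $P$, so $h^*$ kills the extension class. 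Therefore $h^*g^*\shG_0\cong h^*\shA\oplus h^*\shB$ is free, while $h^*g^*\shG_l$ remains free for $1\leq l\leq N$, and $f:=g\circ h\colon P\arr\stX$ --- a composite of finite, flat and surjective maps --- is the map we want.

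The one thing that really needs care is the order of the recursion: it is essential to make the quotient $\shG_1$ free \emph{first}, so that $0\arr\Ker(\shG_0\arr\shG_1)\arr\shG_0\arr\shG_1\arr 0$ becomes an extension of a free sheaf by a free sheaf and its class is literally the class of a $\G_a^m$-torsor; together with the observation that the finiteness of $\Hl^1$ propagates along the finite flat base changes (via $\Hl^1(\stY,E)=\Hl^1(\stX,g_*E)$), this is what allows Lemma \ref{additive torsors} to be applied at every step. The rest is bookkeeping with short exact sequences and the fact that pullbacks of free sheaves are free.
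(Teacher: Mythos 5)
Your proof is correct and follows essentially the same route as the paper's: reduce by induction (working from the top of the filtration) to a single extension of a free sheaf by a free sheaf, identify its class with a torsor under a power of $\G_a$, and kill it with the finite flat surjective cover supplied by Lemma \ref{additive torsors}. The only differences are cosmetic --- the paper splits the class in $\Ext^1(\odi\stX^n,\odi\stX^m)\simeq \Hl^1(\odi\stX)^{mn}$ into $mn$ separate $\G_a$-torsors and takes a fibre product of the resulting covers, and it leaves implicit the check (which you spell out via $\Hl^1(\stY,E)=\Hl^1(\stX,g_*E)$) that the $\Hl^1$-finiteness hypothesis persists under finite flat base change.
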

\begin{proof}
 Using induction it is enough to show that if $0\arr \odi\stX^m \arr \shG\arr \odi\stX^n\arr 0$ is an exact sequence there exists a finite flat surjective map $f\colon \stX'\arr \stX$ such that $f^*\shG$ is free. The sheaf $\shG$ is given by an element $x\in \Ext^1(\odi\stX^n,\odi\stX^m)$. Moreover there exists an isomorphism $\Ext^1(\odi\stX^n,\odi\stX^m)\simeq \Hl^1(\odi\stX)^{mn}$ functorial in $\stX$, so that $x$ corresponds to a sequence $x_i\in \Hl^1(\odi\stX)$. If $x_i$ corresponds to the $\G_a$-torsor $h_i\colon \stP_i\arr \stX$, then by construction $h_i^*x_i=0$. By \ref{additive torsors} there exists a finite flat surjective map $f_i\colon \stX_i\arr\stX$ factoring through $h_i\colon \stP_i\arr\stX$. In particular $f_i^*x_i=0$. Thus $f=\prod_i f_i \colon \prod_i \stX_i \arr \stX$ is a finite flat surjective map such that $f^*x=0$, which means that $f^*\shG\simeq f^*\odi\stX^m\oplus f^*\odi\stX^n$.
\end{proof}

\begin{proof}[Proof of Corollary \ref{corII}]
 By \ref{Tannakian virtually unipotent} we must prove that if $E$ is a virtually unipotent vector bundle on $\stX$ then it is essentially finite. Thanks to \ref{ess finite descends along Frobenius} we can assume that $E$ is an extended essentially finite sheaf. We are going to show that there exists a finite and flat map $f\colon \stX'\arr \stX$ such that $f^*E$ is free. Since $\stX$ is inflexible the conclusion will follow from Corollary \ref{corI}.
 
 Since $\stX$ is inflexible there exists a map $\stX\arr \Gamma$ to a finite gerbe such that all essentially finite quotients $E_i/E_{i+1}$ of a filtration of $E$ as in \ref{filtration vu} comes from $\Gamma$. If $L/k$ is a finite extension with $\Gamma(L)\neq \emptyset$, the pullback of $\Spec L\arr \Gamma$ along $\stX\arr \Gamma$ gives a finite flat surjective  map $g\colon \stY\arr \stX$ trivializing all essentially finite quotients $E_i/E_{i+1}$. Applying \ref{extensions of O} on the sheaf $(g^*E)^\vee$ over $\stY$ we find a finite flat surjective  map $h\colon \stX'\arr \stY$ such that $h^* g^* E ^\vee$ is free. The composition $f\colon \stX'\arrdi h\stY\arrdi g \stX$ gives the desired finite flat surjective map. 
\end{proof}

\section{Trivializations by dominant maps}

We fix a field $k$ of characteristic $p>0$. For the definition and properties of geometrically unibranch stacks we refer to Appendix \ref{Unibranch}.
In this section we are going to prove Theorem \ref{thmII} and deduce Corollary \ref{corIII} from it. The crucial point is the following result.

\begin{thm} \label{dense open for unibranch}
Let $\sX$ be a geometrically unibranch algebraic stack locally of finite type over  $k$ and $\sU\subseteq \sX$ be a dense open subset. Then the functor $\FDiv(\stX/k)\arr \FDiv(\stU/k)$ is fully faithful and stable under sub-objects, that is if $F\in \FDiv(\stX)/k$ and $E_\stU\subseteq F_{|\stU}$ in $\FDiv(\stU/k)$ then there exists $E\subseteq F$ in $\FDiv(\stX/k)$ inducing the given inclusion.

In particular if $\stX$ is geometrically irreducible and quasi-compact then $\Pi_{\FDiv(\stU/k)}\arr \Pi_{\Fdiv(\stX)/k}$ is a quotient of $k$-gerbes.
\end{thm}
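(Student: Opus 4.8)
The plan is to reduce to the case where $\stX$ is normal and integral, and then to analyse the behaviour of $F$-divided sheaves along the codimension-one points of $Z:=\stX\setminus\stU$, where Frobenius-divisibility forces everything in sight to extend.

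\emph{Reduction to the normal integral case.} Since $\FDiv(-/k)$ is insensitive to nilpotent thickenings (\ref{insensible to nilpotent thickenning}) we may assume $\stX$ reduced. Because $\stX$ is geometrically unibranch and locally of finite type over $k$, the normalization $\nu\colon\widetilde\stX\arr\stX$ is finite, surjective and purely inseparable, i.e.\ a finite universal homeomorphism (Appendix \ref{Unibranch}). Pulling $\stU$ back to the dense open $\nu^{-1}(\stU)\subseteq\widetilde\stX$ and using that $\FDiv(-/k)$ turns finite universal homeomorphisms of reduced quasi-compact stacks into equivalences --- this is proved exactly as \ref{insensible to nilpotent thickenning}: for such a map $g\colon\stY\arr\stX$ one has $b^{p^{n}}\in\odi\stX$ for every local section $b$ of $g_{*}\odi\stY$ and $n\gg0$, so $\stY$ and $\stX$ have the same Frobenius colimit and hence the same $\FDiv$ --- we reduce to $\stX$ normal, and then, $\FDiv$ splitting over connected components (each of which meets the dense $\stU$), to $\stX$ normal and integral.

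\emph{Dévissage on $Z$ and the codimension-one computation.} Writing $Z=Z_{1}\cup\dots\cup Z_{r}$ with the $Z_{j}$ irreducible and filtering $\stU$ by the opens $\stX\setminus Z_{j}$, induction on $r$ reduces to the case $Z$ irreducible. If $\codim_{\stX}Z\geq2$, then for all $i$ and all vector bundles $E,F$ on $\stX^{(i)}$ the sheaf $\Homsh(E,F)$ is locally free, hence satisfies $S_{2}$, so $\Hl^{0}(\stX^{(i)},\Homsh(E,F))=\Hl^{0}(\stU^{(i)},\Homsh(E,F))$; this gives full faithfulness, and likewise a subbundle $E_{\stU,i}\subseteq F_{i}|_{\stU^{(i)}}$ extends to the reflexive subsheaf $j_{*}E_{\stU,i}\subseteq j_{*}(F_{i}|_{\stU^{(i)}})=F_{i}$. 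The main work is the case $\codim_{\stX}Z=1$: let $\eta$ be the generic point of $Z$ and $\eta^{(i)}\in\stX^{(i)}$ the corresponding point, so that $\odi{\stX^{(i)},\eta^{(i)}}$ is (after reducing to the local situation) a discrete valuation ring with valuation $v$, and the $n$-fold relative Frobenius $\stX^{(i)}\arr\stX^{(i+n)}$ is at $\eta$ a finite extension of discrete valuation rings of ramification index a power of $p$ tending to $\infty$. Any datum defined over $\mathrm{Frac}\,\odi{\stX^{(i)},\eta^{(i)}}$ underlying an $F$-divided object --- a morphism, a subbundle of $F_{i}$, a global section, $\dots$ --- is at level $0$ the $n$-fold relative Frobenius pull-back of the corresponding datum at level $n$; hence its valuation at $\eta$ is divisible by arbitrarily large powers of $p$, so it is $0$ and the datum extends across $\eta$. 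Gluing the codimension-one extensions with the $S_{2}$-extension in codimension $\geq2$ produces the desired morphism, resp.\ the desired subobject of $F$ in $\FDiv(\stX/k)$; uniqueness is automatic, the $\stX^{(i)}$ being reduced and the $\stU^{(i)}$ dense.

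I expect the main obstacle to be precisely this codimension-one analysis --- and, intertwined with it, checking that the sheaves obtained by gluing are again vector bundles and are compatible with the isomorphisms $\sigma_{i}$, so that they assemble into an honest object of $\FDiv(\stX/k)$. This is exactly where the passage to the normal case, and hence the geometric unibranchness hypothesis, is used in an essential way.

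\emph{The morphism of gerbes.} If $\stX$ is geometrically irreducible and quasi-compact it is geometrically connected, and so is the dense open $\stU$; hence $\FDiv(\stX/k)$ and $\FDiv(\stU/k)$ are $k$-Tannakian by \ref{thm for Fdiv} and \ref{cases for thm for Fdiv}. A $k$-linear exact tensor functor between $k$-Tannakian categories that is fully faithful and stable under subobjects induces a faithfully flat morphism of the associated affine gerbes --- the gerbe form of the Deligne--Milne criterion (see \cite[Appendix B]{TZ}). Applied to $\FDiv(\stX/k)\arr\FDiv(\stU/k)$ this says exactly that $\Pi_{\FDiv(\stU/k)}\arr\Pi_{\FDiv(\stX/k)}$ is a quotient of $k$-gerbes.
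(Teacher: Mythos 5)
Your route (normalize, then d\'evissage on the codimension of $\stX\setminus\stU$, with Hartogs in codimension $\geq 2$ and a valuation argument in codimension $1$) is genuinely different from the paper's, which never separates by codimension: after reducing to $X=\Spec A$ affine and geometrically integral, the paper completes at an arbitrary closed point $\mathfrak p$ of $X\setminus U$, uses geometric unibranchness only to know that $\hat A_{\mathfrak p}$ is a domain, proves that $\FDiv(\hat A_{\mathfrak p}/k)\simeq\Vect(k)$ (every $F$-divided sheaf over a complete local ring is trivial) and that $\FDiv(\widehat K/k)$ is Tannakian with $\End(\mathds 1)$ finite over $k$, and then concludes by the flat-descent lemma $A_{\mathfrak p}=\hat A_{\mathfrak p}\cap K$ and by the Tannakian fact that a subobject of a trivial object splits off as a free summand. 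The valuation argument you use does appear in the paper, but only inside the proof that $\End(\mathds 1)$ of $\FDiv(\widehat K/k)$ is small, and only after reducing to $k$ algebraically closed; as written your version is shakier, since over an imperfect $k$ the local rings $\odi{\stX^{(i)},\eta^{(i)}}$ need not be discrete valuation rings and the claim that the ramification index of the $n$-fold relative Frobenius tends to infinity is not justified. Likewise, the reduction to the normal case via ``$\FDiv$ is invariant under finite universal homeomorphisms'' is a true but nontrivial lemma that you only sketch and that the paper does not need.

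The genuine gap is the one you flag yourself and do not close: for the extension of subobjects, the sheaf you produce over the codimension-$\geq 2$ locus is only the reflexive hull $j_*E_{\stU,i}\cap F_i$ (or $j_*E_{\stU,i}$), and on a normal variety of dimension $\geq 3$ a reflexive sheaf that is locally free in codimension one need not be locally free; nothing in a purely codimension-theoretic argument rules this out, so you have not produced an object of $\FDiv(\stX/k)$, nor shown that the transition maps $\tau_n\colon\phi^*E_{n+1}\arr E_n$ are isomorphisms. This is exactly the point where the $F$-divided structure must be used globally at closed points of arbitrary codimension, and the paper's completion trick is the tool that does it: over $\hat A_{\mathfrak p}$ the whole object $F$ becomes trivial, a Tannakian subobject of a trivial object is a direct summand, hence $E_n$ is free near $\mathfrak p$ and $\tau_n$ is an isomorphism there. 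Attributing the resolution to ``the passage to the normal case'' is not enough --- normality alone does not make reflexive sheaves locally free --- so as it stands the subobject half of the theorem is not proved. The final step (fully faithful and closed under subobjects implies a quotient of gerbes) is fine and is the same as in the paper.
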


We first show how to obtain Theorem \ref{thmII} from the above result.

\begin{proof}[Proof of Theorem \ref{thmII} using Theorem \ref{dense open for unibranch}]
We have to prove the ``if'' part and, thanks to \ref{thm for Fdiv}, we just have to consider the case of $\FDiv$.
 Let $f\colon \stY\arr \stX$ be a dominant map and $E\in \FDiv(\stX/k)$ which is trivialized by $f$. We want to show that $E$ is essentially finite.  Replacing $\sY$ by an atlas we can assume that $\stY=Y$ is a scheme. 
 
 First we assume that $f$ is also flat. In this case, we take a non-empty irreducible open subset $\bar{Y}_0\subseteq Y\times_k\bar{k}$ and let $Y_0\subseteq Y\times_k{k'}$ be a model of $\bar{Y}_0$ over a finite field extension $k'/k$. By \ref{geometrically unibranch on geometric fiber}, \ref{base change for Fdiv} and \ref{essentially finite and base change} we can assume $k'=k$. Consider the image $\sU:=f(Y_0)\subseteq \sX$. From Theorem \ref{thmI}, 3) we see that $E|_{\sU}$ is an essentially finite object over $\sU/k$ and, from  \ref{dense open for unibranch}, we can conclude that also $E$ is essentially finite in $\Fdiv(\sX/k)$ as required.
 
 Now we come back to the general case. Let $X\arr \sX$ be a smooth atlas with $X$ quasi-compact. Using \ref{geometrically unibranch on geometric fiber}, \ref{base change for Fdiv} and \ref{essentially finite and base change} we may extend $k$ a little bit and assume that $X$ has a $k$-rational point $x$. Using the first part of Theorem   \ref{thmI} and the flat case we may replace $\sX$ by the connected component of $x$ in $X$ and assume that $\sX=X$ is a scheme. Using \ref{insensible to nilpotent thickenning} we may assume that $X$ is a reduced scheme. Since $X$ is also geometrically unibranch and connected, it is integral. By \cite[Theorem 24.3]{Mat} the locus $V\subseteq Y$ of points flat over $X$ is open and, since $f$ is dominant, it is non-empty. Replacing $Y$ by $V$ we are in the case when $f$ is flat.
 \end{proof}

\begin{ex} \label{not unibranch}
In Theorem \ref{dense open for unibranch} the hypothesis that $\sX$ is geometrically unibranch is necessary. Assume that $k$ is an algebraically closed field of odd characteristic and 
consider $\sX=\Spec(A)$ where $A=k[t]_t[x,y]/(x^2-y^2t)$, which is an affine integral scheme of finite type over $k$, and its open subset $\stU=\Spec(A_y)$. We claim that $\Pi_{\FDiv(\stU/k)}\arr \Pi_{\FDiv(\stX/k)}$ is not a quotient. Since the profinite quotient of $\FDiv$ is $\Pi^{\NN,\et}$ and, choosing a rational point $u\in \stU$, this corresponds to the Grothendieck \'etale fundamental group $\pi^\et$, it is enough to show that $\pi^\et(\stU,u)\arr \pi^\et(\stX,u)$ is not surjective. 
The scheme $P=\Spec(A[T]/(T^2-t))$ is a $\mu_2$-torsor over $\stX$ and it is non-trivial since $t$ is not a square in $A$. 
On the other hand $P$ becomes trivial when restricted to $\stU$, which implies that there is a quotient $\pi^\et(\stX,u)\arr \mu_2$ such that the composition $\pi^\et(\stU,u)\arr \pi^\et(\stX,u)\arr \mu_2$ is trivial.
\end{ex}

Before proving Theorem \ref{dense open for unibranch} we collect some preparatory results.


\begin{prop} \label{completion is trivial}
Assume that $k$ is a field whose absolute Frobenius is finite and
let $R$ be a complete local $k$-algebra whose residue field is finite over $k$. Then
\[
\FDiv(R/k)\simeq \Vect(R_\et)
\]
\end{prop}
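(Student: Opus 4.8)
The plan is to use the $\mathfrak{m}$-adic completeness of $R$ to reduce the computation to the residue field and then invoke Proposition~\ref{voucher for finite stacks}. Throughout write $\mathfrak{m}$ for the maximal ideal of $R$ and $\kappa=R/\mathfrak{m}$, which by hypothesis is finite over $k$.

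\emph{Reduction to the residue field.} For each $n\geq 1$ the closed immersion $\Spec(R/\mathfrak{m}^n)\hookrightarrow\Spec(R/\mathfrak{m}^{n+1})$ is defined by the nilpotent ideal $\mathfrak{m}^n/\mathfrak{m}^{n+1}$, so by Remark~\ref{insensible to nilpotent thickenning} the restriction functor $\FDiv((R/\mathfrak{m}^{n+1})/k)\to\FDiv((R/\mathfrak{m}^n)/k)$ is an equivalence. Hence the tower $\bigl(\FDiv((R/\mathfrak{m}^n)/k)\bigr)_{n\geq 1}$ has all transition functors equivalences, and the projection onto its first term gives $\varprojlim_n\FDiv((R/\mathfrak{m}^n)/k)\simeq\FDiv((R/\mathfrak{m})/k)=\FDiv(\kappa/k)$.

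\emph{The main step: $\FDiv$ commutes with completion.} The crux is to show that the restriction functors assemble to an equivalence
\[
\FDiv(R/k)\ \xrightarrow{\ \sim\ }\ \varprojlim_n\FDiv((R/\mathfrak{m}^n)/k).
\]
Here the hypothesis that the absolute Frobenius of $k$ be finite is used: writing $R^{(i)}=R\otimes_{k,F_k^i}k$, the finiteness of $[k:k^{p}]$ forces $R^{(i)}$ to be a finite \emph{free} $R$-module, hence again complete and local, with $R^{(i)}/\mathfrak{m}^nR^{(i)}=(R/\mathfrak{m}^n)^{(i)}$, and the relative Frobenii $\phi_{i,i+1}$ are compatible with reduction modulo $\mathfrak{m}^n$. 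Because a vector bundle on a local ring is free and homomorphisms between free modules are given by matrices, completeness yields that a finite free module over $R^{(i)}$ is the same as a compatible system of finite free modules over the $(R/\mathfrak{m}^n)^{(i)}$; feeding this into the definition of $\FDiv$ shows that an object, resp.\ a morphism, of $\FDiv(R/k)$ is exactly a compatible family of objects, resp.\ morphisms, of the $\FDiv((R/\mathfrak{m}^n)/k)$. I expect this verification --- elementary but somewhat lengthy --- to be the main obstacle; I would also point out that $R$ is not assumed Noetherian, but the argument goes through because the $R^{(i)}$ are finite free over $R$, so all the completeness statements hold unconditionally.

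\emph{Conclusion.} Combining the two steps gives $\FDiv(R/k)\simeq\FDiv(\kappa/k)$. Since $\kappa$ is finite over $k$, $\Spec\kappa$ is a finite stack over $k$, so Proposition~\ref{voucher for finite stacks} gives $\FDiv(\kappa/k)\simeq\Vect(\kappa_{\et,k})$. Finally I would check $\kappa_{\et,k}=R_{\et,k}=R_\et$: any finite \'etale $k$-subalgebra of $R$ has no nontrivial idempotents (as $R$ is local), hence is a finite separable field extension of $k$, and being a field it meets $\mathfrak{m}$ trivially and so embeds into $\kappa$; conversely $\kappa_{\et,k}=k(\bar\alpha)$ is finite separable over $k$, and lifting $\bar\alpha$ along $R\twoheadrightarrow\kappa$ via Hensel's lemma applied to its (separable) minimal polynomial produces a copy of $\kappa_{\et,k}$ inside $R$. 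Therefore $\FDiv(R/k)\simeq\Vect(R_\et)$, as claimed.
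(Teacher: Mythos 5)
Your proof is correct, and it reaches the conclusion by a somewhat different organization than the paper's. Both arguments hinge on the same key mechanism: finiteness of the Frobenius of $k$ makes each $R^{(i)}$ a finite free $R$-module, hence a complete local ring, so that free modules and their homomorphisms over $R^{(i)}$ can be assembled from their reductions modulo $\mathfrak{m}^n$; and both feed in Remark \ref{insensible to nilpotent thickenning} together with the residue-field case from Proposition \ref{voucher for finite stacks}. The difference is in the packaging. The paper first invokes \cite[Theorem 6.17]{TZ} to know that $\FDiv(R/k)$ is an $R_\et$-Tannakian category, which reduces the whole proposition to showing that every object is isomorphic to the free object of the same rank; this is then done by producing a compatible system of isomorphisms modulo $\mathfrak{m}^n$ and passing to the limit, so full faithfulness and the identification of the base field come for free from the Tannakian formalism. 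You instead prove the full equivalence $\FDiv(R/k)\simeq\varprojlim_n\FDiv((R/\mathfrak{m}^n)/k)\simeq\FDiv(\kappa/k)$ by hand, on objects and morphisms, and then separately identify $R_\et$ with $\kappa_\et$ via Hensel's lemma. Your route is more self-contained (it does not need the Tannakian structure theorem, nor even that $\FDiv(R/k)$ is abelian) at the cost of the extra bookkeeping in the "main step" and the final Henselian comparison of $R_\et$ with $\kappa_\et$; the paper's route is shorter but leans on the external citation.
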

\begin{proof}
By \cite[Theorem 6.17]{TZ} it follows that $\FDiv(R/k)$ is an $R_\et$-Tannakian category. Thus it is enough to show that any object $ E\in \FDiv(R/k)$ is trivial. Let $\mathfrak{m}$ be the maximal ideal of $R$, $L$ its residue field and $F\in \FDiv(R/k)$ be the free object of rank $\rk E$. Using that $\FDiv$ is insensible to nilpotent thickenings (\ref{insensible to nilpotent thickenning}) and that $\FDiv(L/k)\simeq \Vect(L_\et)$ (\ref{voucher for finite stacks}), there is a compatible system of isomorphisms $\sigma_n\colon E\otimes_R R/\mathfrak{m}^n\arr F\otimes_R R/\mathfrak{m}^n$ in $\FDiv((R/\mathfrak{m}^n)/k)$. Since the absolute Frobenius of $k$ is finite all rings $R^{(i)}$ are complete with respect to $\mathfrak{m}^{(i)}$. As $E_i$ are free $R^{(i)}$-modules they are complete with respect to $\mathfrak{m}^{(i)}$. Thus these $\sigma_n$ extend to an isomorphism $\sigma\colon E\arr F$ as required.
\end{proof}

\begin{lem}\label{intersection}
Let $A$ be a ring, $A\arr A'$ be a faithfully flat map and $ A\arr B$ be an injective map. Then $A= A'\bigcap B$ inside $A'\otimes_AB$.
\end{lem}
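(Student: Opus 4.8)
The plan is to exhibit $A$ as the kernel of a natural surjection out of $A'\otimes_A B$ and then to check that, inside $A'\otimes_A B$, the two subrings $A'$ and $B$ meet exactly in $A$.

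First I would record the maps involved. Since $A\arr A'$ is flat and $A\arr B$ is injective, applying $A'\otimes_A-$ shows that $A'=A'\otimes_A A\arr A'\otimes_A B$ is injective. Since $A\arr A'$ is faithfully flat, hence pure, the canonical map $M\arr A'\otimes_A M$ is injective for every $A$-module $M$; in particular $B\arr A'\otimes_A B$, $b\mapsto 1\otimes b$, is injective, so $A'$ and $B$ really are subrings of $A'\otimes_A B$. One also checks that the composites $A\arr A'\arr A'\otimes_A B$ and $A\arr B\arr A'\otimes_A B$ agree, so that the equality $A=A'\cap B$ is a meaningful statement.

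Next, put $C=\Coker(A\arr B)$ with projection $\pi\colon B\arr C$ and tensor the short exact sequence $0\arr A\arr B\arr C\arr 0$ with the flat algebra $A'$. This yields an exact sequence $0\arr A'\arr A'\otimes_A B\arr A'\otimes_A C\arr 0$, which identifies $A'\subseteq A'\otimes_A B$ with $\Ker(q)$ for $q=\id_{A'}\otimes\,\pi\colon A'\otimes_A B\arr A'\otimes_A C$. Now let $x\in A'\cap B$ and write $x=1\otimes b$ with $b\in B$. Since $x\in A'$ we have $q(x)=0$; on the other hand $q(1\otimes b)=1\otimes\pi(b)$ is the image of $\pi(b)\in C$ under the map $C\arr A'\otimes_A C$, which is injective by purity. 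Hence $\pi(b)=0$, i.e. $b\in A$, so $x\in A$; the reverse inclusion $A\subseteq A'\cap B$ is trivial, and $A=A'\cap B$ follows.

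This is essentially the whole argument, with no genuine obstacle; the only thing to be careful about is tracking the various canonical maps and invoking faithful flatness in the right places to keep $B\arr A'\otimes_A B$ and $C\arr A'\otimes_A C$ injective — precisely the hypotheses of the lemma.
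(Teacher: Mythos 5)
Your proof is correct, and it takes a route that differs from both arguments the paper gives. You tensor the short exact sequence $0\arr A\arr B\arr B/A\arr 0$ with the flat algebra $A'$, identify the copy of $A'$ inside $A'\otimes_A B$ with $\Ker\bigl(A'\otimes_A B\arr A'\otimes_A (B/A)\bigr)$, and then use the universal injectivity (purity) of the faithfully flat map $A\arr A'$ applied to $B/A$ to force any $x=1\otimes b$ lying in that kernel to come from $A$. The paper instead offers two arguments: the first descends the map $A\arr A'\cap B$ along $A\arr A'$ by checking it becomes an isomorphism after $-\otimes_A A'$; the second invokes the descent-theoretic identification of $A$ with the equalizer of the two maps $A'\rightrightarrows A'\otimes_A A'$ and observes that $A'\cap B$ lands in that equalizer. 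All three are in the same circle of ideas, but yours is arguably the most self-contained: it needs only that flat base change preserves exactness and that a faithfully flat map is universally injective, rather than the full equalizer form of faithfully flat descent (paper's second proof) or the slightly delicate surjectivity check after base change (paper's first proof). The one point worth stating explicitly, which you do correctly, is that purity of $A\arr A'$ is what makes $B\arr A'\otimes_A B$ and $(B/A)\arr A'\otimes_A(B/A)$ injective; without the latter the final step would not go through.
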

\begin{proof} 
 We provide two proofs:\\
(1) Consider the map $A\xrightarrow{\phi} A'\bigcap B\subseteq B$. It is enough to show that $\phi':=\phi\otimes_AA'$ is an isomorphism. For this it is enough to show that for any element $x\in  A'\bigcap B$, $x\otimes 1\in B\otimes_AA'$ is in the image of $A'\arr  B\otimes_AA'$. But by the definition of $A'\cap B$, there exists $a'\in A'$ such that $1\otimes a'= x\otimes 1\in B\otimes_AA'$. This means that $\phi'$ is an isomorphism, whence the claim.

\noindent (2) Consider the diagram $$\xymatrix{A\ar[r]\ar@{^{(}->}[d]&A'\ar@{^{(}->}[d]\ar@/^/[r]^-{f_1}\ar@/_/[r]_-{f_2}&A'\otimes_AA'\ar@{^{(}->}[d]\\B\ar[r]&B\otimes_AA'\ar@/^/[r]^-{g_1}\ar@/_/[r]_-{g_2}&B\otimes_AA'\otimes_AA'}$$ with canonical maps. From the diagram it is clear that $A'\bigcap B$, as a subset of $A'$, is contained in the subset of equalizers of $f_1$ and $f_2$, i.e. $A'\bigcap B\subseteq A$. Thus $A'\bigcap B= A$.  
\end{proof}

\begin{lem}\label{endomorphism of the trivial object}
Let $A$ be a Noetherian complete local domain over a perfect field $k$ with fraction field $K$ and assume that its residue field is finitely generated over $k$. Then $\FDiv(K/k)$ is $K_\et$-Tannakian category and $K_\et/k$ is finite.
\end{lem}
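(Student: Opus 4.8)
The plan is as follows. By Theorem~\ref{thm for Fdiv}, applied to $\Spec K$ with its tautological fpqc cover (note that $K/k$ is separable, hence separable up to Frobenius, because $k$ is perfect, see \ref{cases for thm for Fdiv}), the forgetful functor $\FDiv(K/k)\to\Vect(K)$ is faithful and exhibits $\FDiv(K/k)$ as a Tannakian category over the field $L:=\End_{\FDiv(K/k)}(\mathds 1)\subseteq K$. So everything reduces to proving $L=K_\et$ and that this field is finite over $k$. Since $k$ is perfect one has $K^{(i)}\cong K$ for all $i$, and a short computation (as in the remark after the definition of $\FDiv_i$) shows that an endomorphism of $\mathds 1$ in $\FDiv(K/k)$ is a compatible system $(a_i)_i$ of elements of $K$ with $a_i^{\,p}=a_{i-1}$; hence $L=\bigcap_i K^{p^i}$, the largest perfect subfield of $K$. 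The inclusion $K_\et\subseteq L$ is immediate: if $b\in K_\et$ then $k(b)/k$ is finite separable, so $k(b)$ is perfect, all $p$-power roots of $b$ lie in $k(b)\subseteq K$, and $b\in L$.

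For the reverse inclusion I would replace $A$ by its normalization $\bar A$, again a complete Noetherian local normal domain with the same fraction field $K$ and with residue field $\bar\kappa$ finite over $A/\mathfrak m$, hence finitely generated over $k$; write $\bar{\mathfrak m}$ for its maximal ideal. First I would record that $\bar\kappa_{\mathrm{perf}}:=\bigcap_i\bar\kappa^{p^i}$ is finite over $k$: it is a perfect subfield of $\bar\kappa$, and if it contained an element $\bar a$ transcendental over $k$ it would contain $k(\bar a)^{1/p^\infty}$, which is not finitely generated over $k$, contradicting the fact that subfields of the finitely generated extension $\bar\kappa/k$ are finitely generated; thus $\bar\kappa_{\mathrm{perf}}/k$ is algebraic, and, sitting inside a finitely generated extension, it is finite.

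Now let $a\in L$. Since each $a^{1/p^i}$ lies in $K$, every divisorial valuation of the normal domain $\bar A$ takes on $a$ a value divisible by all $p^i$, so that value is $0$; hence $a$ and all its $p$-power roots are units of $\bar A$. Their residues $c_i\in\bar\kappa^{\times}$ satisfy $c_i^{\,p}=c_{i-1}$, so $c_0\in\bar\kappa_{\mathrm{perf}}$. Choosing a coefficient field $\iota\colon\bar\kappa\hookrightarrow\bar A$ (Cohen), the element $\lambda:=\iota(c_0)$ is separable algebraic over $k$, hence $\lambda\in K_\et$; and $u_i:=a^{1/p^i}/\iota(c_i)$ lies in $1+\bar{\mathfrak m}$ with $u_i^{\,p}=u_{i-1}$ (because $\iota$ is a ring homomorphism). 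Writing $u_i=1+w_i$ and using $(1+w)^p=1+w^p$ in characteristic $p$ gives $w_{i-1}=w_i^{\,p}$, so $w_0=w_i^{\,p^i}\in\bar{\mathfrak m}^{p^i}$ for every $i$; since $\bigcap_i\bar{\mathfrak m}^{p^i}=0$ by Krull's intersection theorem, $w_0=0$ and $a=\lambda\in K_\et$. This proves $L=K_\et$; moreover $a=\iota(c_0)\in\iota(\bar\kappa_{\mathrm{perf}})$, so $K_\et$ embeds into $\bar\kappa_{\mathrm{perf}}$ and is therefore finite over $k$.

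I expect the main obstacle to be exactly this last inclusion $L\subseteq K_\et$, i.e.\ ruling out transcendental elements in the largest perfect subfield of $K$: this is where completeness enters (via Krull's intersection theorem), together with normality (via the divisorial-valuation argument and the existence of a coefficient field of $\bar A$) and the hypothesis that the residue field is finitely generated over $k$. The identification $L=\bigcap_i K^{p^i}$, which uses perfectness of $k$, and the finiteness of $\bar\kappa_{\mathrm{perf}}/k$ are routine but should be stated carefully.
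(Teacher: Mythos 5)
Your proof is correct, and its skeleton coincides with the paper's: both reduce to the normalization $\bar A$ (finite over $A$ since a complete Noetherian local ring is Nagata, and local since $A$ is Henselian), both use the divisorial valuations of the normal domain to show that a compatible system $(x_n)$ with $x_{n+1}^p=x_n$ consists of units of $\bar A$, and both bound the answer by the finitely generated residue field. The one place where you genuinely diverge is the step $L\subseteq K_\et$: the paper, having shown $x_n\in \bar A$ for all $n$, simply invokes the identification $\End_{\FDiv(\bar A/k)}(\mathds{1})=\bar A_\et$ from Theorem \ref{thm for Fdiv} (i.e.\ from \cite{TZ}), whereas you re-prove this by hand, showing that a coherent system of $p$-power roots of a unit in a complete local ring is the Teichm\"uller lift of an element of the perfect part of the residue field, via a Cohen coefficient field, the identity $(1+w)^p=1+w^p$, and Krull's intersection theorem. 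Your version is longer but self-contained modulo Cohen's structure theorem, and it makes visible exactly where completeness is used; the paper's version is shorter at the price of leaning on the external computation of $\End_{\FDiv}(\mathds{1})$ for local rings. Two minor points you should make explicit if you write this up: the coefficient field must be chosen to contain $k$ (possible since $k$ is perfect, so $\bar\kappa/k$ is separable), so that $\iota(c_0)$ is genuinely separable algebraic over $k$; and the normalization $\bar A$ is again complete and local (the latter because $A$ is Henselian), which you assert but do not justify.
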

\begin{proof}
By \ref{thm for Fdiv} and \ref{cases for thm for Fdiv} we can conclude that $\FDiv(A/k)$ is an $A_\et$-Tannakian category and $\FDiv(K/k)$ is an $F$-Tannakian category where $F=\End_{\FDiv(K/k)}(\mathds{1})$.
Let $B$ be the integral closure of $A$ in $K$. Since $A$ is Nagata \cite[\href{http://stacks.math.columbia.edu/tag/032E}{032E}, Lemma 10.156.2 and 10.156.8]{SP}), $B$ is finite over $A$ and since $A$ is Henselian, $B$ is a local domain.  Thus we can assume that $A$ is normal and in this case we will show that $F\subseteq A_\et$. This will end the proof because $K_\et\subseteq F \subseteq A_\et\subseteq K_\et$ and because $A_\et\subseteq L_\et$ is finite, where $L$ is the residue field which is finitely generated over $k$.

Pick $x\in F$, that is $x=(x_n)_{n\in \N}$ with $x_n\in K$ and $x_{n+1}^p=x_n$ (here we use that $K^{(i)}\simeq K$ because $k$ is perfect). Given a discrete valuation $v\colon K\arr \Z$ one can easily conclude that $v(x_n)=0$ for all $n\in \N$, which means that $x_n\in A$ for all $n\in \N$. In particular $x\in \End_{\FDiv(A/k)}(\mathds{1})=A_\et$ as desired.
\end{proof}

\begin{lem}\label{twist to get geo irreducible}
 Let $\stX$ be a stack of finite type over $k$. Then there exists a finite and purely inseparable extension $L/k$ such that $(\stX\times_k L)_\red$ is geometrically reduced.
\end{lem}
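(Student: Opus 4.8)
The plan is to reduce the statement to a question about a single finitely generated $k$-algebra and then to descend the desired property from the perfect closure $k^{1/p^\infty}$ of $k$ down to a finite purely inseparable subextension.

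First I would choose a smooth atlas $\Spec A\arr \stX$ with $A$ a finitely generated $k$-algebra: such an atlas exists because $\stX$ is of finite type, so one may cover a quasi-compact scheme atlas by finitely many affine opens and take their (finite, hence affine) disjoint union. For any field extension $L/k$ the base change $\Spec(A\otimes_k L)\arr \stX\times_k L$ is again a smooth atlas, and since a flat and finitely presented morphism with reduced fibres over a reduced base has reduced total space (in particular this applies to smooth morphisms), the closed subscheme of $\Spec(A\otimes_k L)$ obtained by pulling back the nilpotent ideal cutting out $(\stX\times_k L)_\red$ is reduced and cut out by a nilpotent ideal, hence is cut out by the nilradical; that is, $\Spec\big((A\otimes_k L)_\red\big)$ is a smooth atlas of $(\stX\times_k L)_\red$. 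As geometric reducedness of a scheme of finite type over a field is smooth-local, it therefore suffices to produce a finite purely inseparable $L/k$ for which $(A\otimes_k L)_\red$ is geometrically reduced over $L$.

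Now set $k_\infty:=k^{1/p^\infty}$, so that $A\otimes_k k_\infty=\varinjlim_{L}A\otimes_k L$ is the filtered colimit over the finite purely inseparable subextensions $L\subseteq k_\infty$. Since $A\otimes_k k_\infty$ is Noetherian its nilradical $N$ is generated by finitely many elements $f_1,\dots,f_n$, and each $f_j$ lies in $A\otimes_k L$ for some such $L$; fix one such $L$. I would then check that this $L$ works: the inclusion $A\otimes_k L\arr A\otimes_k k_\infty=:C$ is faithfully flat, so $IC\cap(A\otimes_k L)=I$ for every ideal $I$ of $A\otimes_k L$; combining this with the facts that each $f_j$ is already nilpotent in $A\otimes_k L$ (by injectivity of $A\otimes_k L\arr C$) and that nilpotents contract to nilpotents, one gets that $(f_1,\dots,f_n)(A\otimes_k L)$ is exactly the nilradical of $A\otimes_k L$. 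Hence $(A\otimes_k L)_\red\otimes_L k_\infty=(A\otimes_k k_\infty)/N=(A\otimes_k k_\infty)_\red$, which is reduced.

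Finally, $(A\otimes_k L)_\red$ is geometrically reduced over $L$: one has $L^{1/p^\infty}=k_\infty$, and any finite purely inseparable extension $L'/L$ embeds into $k_\infty$ over $L$, so $(A\otimes_k L)_\red\otimes_L L'$ is a subring of the reduced ring $(A\otimes_k L)_\red\otimes_L k_\infty$ (by faithful flatness of $L'\arr k_\infty$) and hence reduced, which is exactly the criterion for geometric reducedness over $L$. This $L$ then satisfies the conclusion. The argument has no deep content; the only points requiring care are the two finiteness reductions — that the nilpotents of $A\otimes_k k_\infty$ are already defined over a finite purely inseparable level, and that faithfully flat descent of ideals then forces $(-)_\red$ to commute with the remaining base change $-\otimes_L k_\infty$ — together with the bookkeeping that $(-)_\red$ and $-\times_k L$ are compatible with a chosen smooth atlas.
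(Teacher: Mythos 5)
Your proof is correct and follows essentially the same route as the paper's: reduce to an affine smooth atlas $\Spec A$, note that the nilradical of $A\otimes_k k^{1/p^\infty}$ is finitely generated and hence defined over a finite purely inseparable subextension $L$, and conclude that $(A\otimes_k L)_\red$ becomes reduced (hence is geometrically reduced) after base change to the perfect closure. You simply spell out the faithfully flat descent of the nilradical and the compatibility of reduction with atlases, which the paper leaves implicit.
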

\begin{proof}
 Taking a smooth covering by an affine scheme one can assume $\stX=\Spec A$. Consider the exact sequence $$0\arr I \arr A\otimes_kk^{\frac{1}{p^{\infty}}}\arr (A\otimes_kk^{\frac{1}{p^{\infty}}})_\red\arr 0 $$
 where $k^{\frac{1}{p^{\infty}}}$ be the perfect closure of $k$ and   $I\subseteq A\otimes_k k^{\frac{1}{p^{\infty}}}$ is the ideal of nilpotents. Since $I$ is finitely generated, it is defined over a finite intermediate extension $k\subseteq L\subseteq k^{\frac{1}{p^{\infty}}}$. Extending $A$ to $L$ we may assume that $L=k$, i.e. $\exists I_k\subseteq A$ such that $I_k\otimes_kk^{\frac{1}{p^{\infty}}}=I$. Since $I_k\subseteq I$ is nilpotent,   $A_\red=A/I_k$ is geometrically reduced. 
\end{proof}

\begin{proof}[Proof of Theorem \ref{dense open for unibranch}]
We start by showing the fully faithfulness. The restriction functor $\FDiv(\sX/k)\arr\FDiv(\sU/k)$ is faithful even when $\sX$ is not geometrically unibranch. To see this we may assume that $\sX$ is connected. Let $f:E\arr F\in \FDiv(\sX/k)$ whose restriction to $\sU$ is 0. Then the image $H$ of $f$ is an object in $\FDiv(\sX/k)$ whose restriction to $\sU$ is 0. As the rank of $H$ is constant and $\sU$ is non-empty, $H=0$, i.e. $f|_\sU=0$.

Since $\FDiv$ is a stack in the fpqc topology, using the faithfulness one can easily reduce to the case $\stX=X=\Spec A$ is affine. Using \ref{twist to get geo irreducible} and that $\FDiv(X/k)\simeq\FDiv(X^{(i)}/k)\simeq \FDiv((X^{(i)})_\red/k)$ we can further assume that $X$ and $U=\stU$ are also geometrically integral by extending $X$ to a finite   extension of $k$, restricting to one connected component, and then taking reduction. We can also assume that $U=\Spec A_a$ with $a\in A$. 

Let  $E,F\in\Fdiv(X/k)$ and consider $\phi_U\in\Hom_{\Fdiv(U)}(E|_U,F|_{U})$. Since all $X^{(i)}$ are integral it is enough to extend  each map  $E_i|_U\arr F_i|_U$ to $E_i\arr F_i$ and, up to replace $X$ by $X^{(i)}$, we just have to show this extension for $i=0$. Shrinking $X$ we can assume that $E_0=A^{\oplus n}$ and $F_0=A^{\oplus m}$, so that $\phi_U$ is a matrix with coefficients in $A_a$ and we must prove those coefficients belong to $A$. Using the functor $\Fdiv(-/k)\arr \FDiv(-/\overline k)$ and \ref{intersection} we can assume that $k$ is algebraically closed. Denote by $K$ the fraction field of $A$ and by $\phi_K \colon E\otimes K\arr F\otimes K$ the restriction of $\phi_U$. Consider $\mathfrak{p}\in X\setminus U$ a closed point and the diagram $$\xymatrix{A_\mathfrak{p}^{\oplus n}
\ar@{.>}[r]\ar@{^{(}->}[d] & A_\mathfrak{p}^{\oplus m}\ar@{^{(}->}[d]\\K^{\oplus n}
\ar[r]^-{(\phi_K)_0} & K^{\oplus m}}$$
We must show the existence of the dashed arrow, or in other words, that the coefficients of the matrix $\phi_K$ are in $A_\mathfrak{p}$. Using again \ref{intersection}, it is enough to show that $\FDiv(\hat A_\mathfrak{p}/k)\arr \FDiv(\hat K/k)$ is fully faithful, where $\hat A_\mathfrak{p}$ and $\hat K$ are the completion of $A_\mathfrak{p}$, which is a domain because $A$ is geometrically unibranch (\ref{basic properties}), and its fraction field respectively. This follows because $\FDiv(\hat A_\mathfrak{p}/k)=\Vect (k)$ by \ref{completion is trivial} and $\FDiv(\hat K/k)$ is a $k$-Tannakian category by \ref{endomorphism of the trivial object}.

We now show that the full subcategory $\Fdiv(\sX/k)\subseteq \Fdiv(\sU/k)$ is stable under taking subobjects.  Let $F\in \Fdiv(\sX/k)$, and let $E_{\sU}\subseteq F|_{\sU}$. We will show that there exists $E\subseteq F$ such that $E|_\sU=E_\sU$. By the fully faithfulness which we just proved, if the extension $E$ exists it is unique. Since $\FDiv$ is a stack in the \'etale topology we can assume $\stX=X=\Spec A$ and also $\sU=U=\Spec A_a$ for some $a\in A$. As above we can further assume that $X$ and $U$ are geometrically integral. Denote by $j\colon U\arr X$ the open embedding. We have an infinite sequence of Cartesian diagrams 
  \[
  \begin{tikzpicture}[xscale=2.0,yscale=-1.2]
    \node (A0_0) at (0, 0) {$U$};
    \node (A0_1) at (1, 0) {$U^{(1)}$};
    \node (A0_2) at (2, 0) {$U^{(2)}$};
    \node (A0_3) at (3, 0) {$\cdots$};
    \node (A1_0) at (0, 1) {$X$};
    \node (A1_1) at (1, 1) {$X^{(1)}$};
    \node (A1_2) at (2, 1) {$X^{(2)}$};
    \node (A1_3) at (3, 1) {$\cdots$};
    \path (A0_1) edge [->]node [auto] {$\scriptstyle{j^{(1)}}$} (A1_1);
    \path (A0_0) edge [->]node [auto] {$\scriptstyle{}$} (A0_1);
    \path (A0_1) edge [->]node [auto] {$\scriptstyle{}$} (A0_2);
    \path (A1_0) edge [->]node [auto] {$\scriptstyle{}$} (A1_1);
    \path (A1_1) edge [->]node [auto] {$\scriptstyle{}$} (A1_2);
    \path (A0_2) edge [->]node [auto] {$\scriptstyle{j^{(2)}}$} (A1_2);
    \path (A1_2) edge [->]node [auto] {$\scriptstyle{}$} (A1_3);
    \path (A0_2) edge [->]node [auto] {$\scriptstyle{}$} (A0_3);
    \path (A0_0) edge [->]node [auto] {$\scriptstyle{j}$} (A1_0);
  \end{tikzpicture}
  \]
In particular, since the vertical maps are affine, given $H\in\FDiv(U/k)$ there are isomorphisms $\sigma_n \colon (j^{(n+1)}_*H_{n+1})_{|X^{(n)}}\arr j^{(n)}_*((H_{n+1})_{|U^{(n)}})\simeq j^{(n)}_*H_n$, so we can define $j_*H$ as the "quasi-coherent $F$-divided sheaf" $(j^{(n)}_*H_{n},\sigma_n)$. We have $F \subseteq j_*j^{*}F$ and $j_*(E_U)\subseteq j_*j^{*}F$ and define $E_n:=j_*^{(n)}(E_U)_n \bigcap F_n$. By construction  $j^{(n)^*}E_n = (E_U)_n$, and there are canonical maps $\tau_n:(E_{n+1})_{|X^{(n)}}\arr E_n$ which are compatible with the transition isomorphisms of $F$. We are going to prove that those $\tau_n$ are isomorphisms, that is $(E,\tau_n)\in \FDiv(X/k)$. This would conclude the proof, as $(E,\tau_n)$ gives the desired extension.
In order to show that those $\tau_n$ are isomorphisms we can first assume that $k$ is algebraically closed, and by completing at closed points, we may further assume that $A$ is a complete local Noetherian domain with residue field $k$. In particular $\FDiv(X/k)=\Vect (k)$ by \ref{completion is trivial} and $\FDiv(U/k)$ is a $k$-Tannakian category using \ref{thm for Fdiv}. Since a subobject of a trivial object in $\FDiv(U/k)$ is trivial,  $E_U\subseteq F|_U$ is trivial. Thus we can write $F=\tilde E \oplus G$, where $\tilde E,G$ are trivial objects in $\Fdiv(X/k)$, and $\tilde E|_U=E_U$. Now it is clear that $\tilde E_n=j_*^{(n)}(E_U)_n \bigcap F_n=E_n$ and the transition maps must coincide. Thus $\tau_n$ are all isomorphisms.
\end{proof}

\begin{proof}[Proof of Corollary \ref{corIII}]
 Consider a map $\Spec L\arr \Gamma$, where $L/k$ is a finite extension, which exists because $\Gamma$ is of finite type. Since the map $\Spec L\arr \Gamma\times_k L$ is faithfully flat and affine, and  $L$ is smooth over $L$, we can conclude that $\Gamma$ is a smooth stack over $k$ and, in particular, geometrically unibranch and geometrically integral. Since by \ref{voucher for finite stacks} $\FDiv(L/k)=\Vect (L_\et)$, applying Theorem \ref{thmII} to the flat map $\Spec L\arr\Gamma$ we can conclude that all objects of $\FDiv(\Gamma/k)$ and $\FDiv_\infty(\Gamma/k)$ are essentially finite. The remaining claims follow from \ref{thm for Fdiv} taking into account that $\Gamma$ is inflexible over $k$, $\Pi^{\NN,\et}_{\Gamma/k}=\Gamma_\et$ and $\Pi^\NN_{\Gamma/k}=\hat\Gamma$ (see \cite[Definition B8]{TZ}).
\end{proof}

\appendix
\section{Geometrically Unibranch Algebraic Stacks}\label{Unibranch}

\begin{defn}
A local ring $R$ is called geometrically unibranch if, denoted by $R_\red$ its reduction, $R_\red$ is a domain and its integral closure in its fraction field is a local ring whose residue field is a purely inseparable extension of that of $R$.

A scheme $X$ is called geometrically unibranch at a point $x\in X$ if the ring $O_{X,x}$ is geometrically unibranch. A scheme is called geometrically unibranch if it is geometrically unibranch at all its points (see \cite[\href{http://stacks.math.columbia.edu/tag/0BQ1}{0BQ1}]{SP}). 
\end{defn}

\begin{ex}
 All normal schemes are geometrically unibranch.
\end{ex}

\begin{rmk}\label{basic properties}
Let $R$ be a local ring.
 By \cite[pp.100, Definition 2]{Ray} $R$ is geometrically unibranch if and only if the strict Henselization $R^{sh}$ has a unique minimal prime, that is $(R^{sh})_\red$ is a domain. 
 
 By \cite[18.9.1]{EGA4} if $R$ is an excellent local ring and it is geometrically unibranch then the completion $\hat R$ has a unique minimal prime.
 
 If $\Spec R$ is geometrically unibranch then $R$ is geometrically unibranch but the converse is false because this condition does not pass to localizations \cite[Ch 0, 6.5.11, pp. 149]{EGA1}.
\end{rmk}

\begin{rmk}\label{locally irreducible}
 In a geometrically unibranch scheme all irreducible components are also connected components. Indeed the localization in a point lying in an intersection of two different irreducible components has at least two minimal primes. In particular a locally connected (e.g. locally Noetherian) geometrically unibranch scheme is a disjoint union of irreducible and geometrically unibranch schemes. 
\end{rmk}

\begin{lem}\label{geometrically unibranch on geometric fiber}
Let $k$ be a field, $X$ be a $k$-scheme and $\overline x \in X\times_k \overline k$ be a point lying over $x\in X$. Then $X\times_k\overline k$ is geometrically unibranch at $\overline x$ if and only if $X$ is geometrically unibranch at $x$.
\end{lem}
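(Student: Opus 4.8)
The plan is to reduce the question to a statement about local rings and their strict henselizations, using the characterization of geometric unibranchness recalled in Remark \ref{basic properties}: a local ring $R$ is geometrically unibranch if and only if $(R^{sh})_\red$ is a domain, equivalently $R^{sh}$ has a unique minimal prime. So the real content is: for $\mathcal O_{X,x}$ with residue field $k(x)$ and a geometric point $\overline x$ over $x$, the strict henselization of $\mathcal O_{X\times_k \overline k, \overline x}$ has a unique minimal prime if and only if $\mathcal O_{X,x}^{sh}$ does. The key observation making this work is that strict henselization is insensitive to the exact residue field as long as we pass to the \emph{separable closure}, and that $\overline k / k$ is purely inseparable over the separable closure $k^{\mathrm{sep}}$; more precisely $\mathcal O_{X,x}^{sh}$ and $\mathcal O_{X\times_k\overline k,\overline x}^{sh}$ should have the \emph{same} strict henselization, or at least one is a filtered colimit of the other along faithfully flat maps that do not change the set of minimal primes.

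First I would reduce to the affine case, $X = \Spec A$, and localize so that we are comparing $A_\pp^{sh}$ with $(A\otimes_k \overline k)_\qq^{sh}$ for $\qq$ lying over $\pp$. Then I would break $\overline k/k$ into two pieces: the separable closure $k^{\mathrm{sep}}/k$ and the purely inseparable part $\overline k/k^{\mathrm{sep}}$. For the separable part: $A\otimes_k k^{\mathrm{sep}}$ is obtained from $A$ by an ind-\'etale base change, so $\Spec(A\otimes_k k^{\mathrm{sep}})\to\Spec A$ is formally \'etale, and for any prime $\qq'$ over $\pp$ the strict henselization $(A\otimes_k k^{\mathrm{sep}})_{\qq'}^{sh}$ is canonically isomorphic to $A_\pp^{sh}$ — this is the standard fact that strict henselization is invariant under ind-\'etale base change on the residue field (\cite[\href{http://stacks.math.columbia.edu/tag/04GU}{04GU}]{SP} and surroundings). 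So the condition ``unique minimal prime in the strict henselization'' is unchanged. For the purely inseparable part, $k^{\mathrm{sep}}\to \overline k$ is a filtered colimit of finite purely inseparable (hence radicial, hence universally injective and integral) extensions; base changing $A$ along such an extension gives an integral, universally injective, faithfully flat map $\Spec B \to \Spec A$, which is a homeomorphism on spectra, and for a prime $\qq''$ over $\pp'$ one has $B_{\qq''}^{sh}$ is faithfully flat over $A_{\pp'}^{sh}$ and again induces a homeomorphism on spectra (radicial maps stay radicial after base change). A faithfully flat map that is a homeomorphism on spectra between (strictly henselian local) rings matches up minimal primes bijectively, so ``unique minimal prime'' is preserved in both directions.

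Putting these together: $\mathcal O_{X,x}^{sh}$ has a unique minimal prime $\iff$ $(A\otimes_k k^{\mathrm{sep}})_{\qq'}^{sh}$ does $\iff$ $(A\otimes_k \overline k)_{\overline x}^{sh}$ does, which is exactly the claimed equivalence by Remark \ref{basic properties}. I should be a little careful about the residue-field condition in the definition (the integral closure being local with purely inseparable residue extension): but since the reference definition via $R^{sh}$ already encodes exactly that, working with strict henselizations throughout avoids having to check it by hand — geometrically unibranch is literally ``$(R^{sh})_\red$ is a domain.''

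The main obstacle I expect is bookkeeping about which base changes preserve the ``unique minimal prime'' property and citing the precise invariance statements for strict henselization under ind-\'etale and under radicial integral base change; none of the steps is deep, but one must be precise that strict henselization commutes with filtered colimits of rings and that a faithfully flat, radicial, integral morphism induces a bijection on minimal primes (and remains so after the further localizations and strict henselizations involved). An alternative, perhaps cleaner, route is to invoke directly the known result that the property ``geometrically unibranch at $x$'' is a local property stable under arbitrary field extension of the base — but since the paper wants a self-contained appendix, I would spell out the strict-henselization argument above.
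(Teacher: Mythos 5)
Your proposal is correct and follows essentially the same route as the paper: both reduce to the characterization that $R$ is geometrically unibranch iff $R^{sh}$ has a unique minimal prime, split $\overline k/k$ into the separable closure (handled by a filtered-colimit/ind-\'etale invariance of strict henselization) and the purely inseparable remainder (handled by the fact that an integral, radicial, surjective base change identifies the strict henselizations up to a faithfully flat homeomorphism on spectra, which the paper justifies by the topological invariance of the small \'etale site from SGA4). The only difference is bookkeeping: you process $\overline k/k^{\mathrm{sep}}$ as a colimit of finite purely inseparable pieces, while the paper does it in one step.
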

\begin{proof}
We can assume $X=\Spec A$, where $(A,\pp)$ is a local ring and $x=\pp\in \Spec A$.
Let $k^s$ be the separable closure of $k$ and $P\in X\times_k k^s$ lying over $\pp\in X$. For all $k\subseteq L\subseteq k^s$, set $P_L=P\cap (A\otimes_k L)$. Then
\[
\varinjlim_{k\subseteq L\subseteq k^s}(A\otimes_k L)_{P_L}=(A\otimes_kk^s)_P
\]
which implies that $A_\pp$ and $(A\otimes_k k^s)_P$ has the same strict Henselization, so that one is geometrically unibranch if and only if the other is.
In particular we can assume $k=k^s$ separably closed. In this case $A\otimes_k \overline k$ is local so that $\overline x$ corresponds to its maximal ideal. Since $A\arr A\otimes_k \overline k$ is surjective, \textit{purely inseparable} (\cite[Ch I, Prop. 3.7.1, pp. 246]{EGA1}) and integral,  by  \cite[Expos\'e VIII, Th\'eor\` eme 1.1]{SGA4} the pullback along $\Spec(A\otimes_k \overline k)\arr \Spec A$ induces an equivalence of categories between the small \'etale sites of the two schemes.
This easily implies that
\[
(A\otimes_k \overline k)^{sh}\simeq A^{sh}\otimes_A (A\otimes_k \overline k)\simeq A^{sh}\otimes_k \overline k
\]
Thus $\Spec(A\otimes_k \overline k)^{sh}\arr \Spec A^{sh}$ is an homeomorphism and the result is clear.
\end{proof}

\begin{prop}\label{being smooth local}
Let $f\colon X\arr Y$ be a faithfully flat map of schemes, $x\in X$ and $y=f(x)$. If $X$ is geometrically unibranch at $x$ then $Y$ is geometrically unibranch at $y$ and the converse holds if all the geometric fibers of $f$ are normal.
\end{prop}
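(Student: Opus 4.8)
The statement is local on $X$ and $Y$, and being geometrically unibranch at a point depends only on the local ring there; by Remark \ref{basic properties} a local ring $R$ is geometrically unibranch exactly when $R^{sh}$ has a unique minimal prime, i.e. $(R^{sh})_\red$ is a domain. So I would set $A=\mathcal O_{Y,y}$, $B=\mathcal O_{X,x}$, note that $A\to B$ is a flat local homomorphism hence faithfully flat, and pass to strict Henselisations. Choosing compatible separable closures of the residue fields gives a \emph{local} map $A^{sh}\to B^{sh}$; it is faithfully flat because, writing $C=B\otimes_A A^{sh}$, the ring $C$ is flat over $A^{sh}$ by base change and $B^{sh}$ is obtained from $C$ by localisation and strict Henselisation, both flat, so $B^{sh}$ is flat (and local, hence faithfully flat) over $A^{sh}$. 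A routine check with \'etale localisation shows the geometric normality of the fibres of $f$ is equivalent to that of the fibres of $\Spec B^{sh}\to\Spec A^{sh}$. Thus everything reduces to the following statement about a faithfully flat local homomorphism $A\to B$ of strictly Henselian local rings: (a) if $B$ has a unique minimal prime then so does $A$; (b) if $A$ has a unique minimal prime and all fibres of $\Spec B\to\Spec A$ are geometrically normal, then $B$ has a unique minimal prime.

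\textbf{Descent (a).} Here $\Spec B\to\Spec A$ is surjective. Let $\mathfrak q\subseteq B$ be the unique minimal prime, $\mathfrak p=\mathfrak q\cap A$. For any minimal prime $\mathfrak p'\subseteq A$, pick $\mathfrak q'\in\Spec B$ over it; then $\mathfrak q\subseteq\mathfrak q'$, so $\mathfrak p=\mathfrak q\cap A\subseteq\mathfrak q'\cap A=\mathfrak p'$, forcing $\mathfrak p=\mathfrak p'$. Hence $A$ has a unique minimal prime. This direction needs only surjectivity of $\Spec B\to\Spec A$, so in fact it does not use the strict Henselisation at all.

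\textbf{Ascent (b).} Let $\mathfrak p$ be the unique minimal prime of $A$; it is the nilradical, so $A_\red=A/\mathfrak p$ is a strictly Henselian domain with fraction field $K=\kappa(\mathfrak p)$. Since $A_\red$ is reduced and all geometric fibres of $\overline B:=B/\mathfrak p B=B\otimes_A A_\red$ over $A_\red$ are reduced, $\overline B$ is reduced (reducedness ascends along a flat map with reduced fibres over a reduced base); as $\overline B$ surjects onto $B_\red$ with nilpotent kernel we get $\overline B=B_\red$, flat over the domain $A_\red$. By going-down every minimal prime of $B_\red$ lies over $(0)$, so they correspond bijectively to the minimal primes of the generic fibre $B_\red\otimes_{A_\red}K$. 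It therefore suffices to show that this generic fibre is irreducible, and since it injects (faithful flatness of $K\to\overline K$) into its base change $B_\red\otimes_{A_\red}\overline K$ to an algebraic closure — which is normal by hypothesis, hence a finite product of normal domains — it suffices to show $B_\red\otimes_{A_\red}\overline K$ is \emph{connected}: a connected normal ring is a normal domain, and a subring of a domain is a domain. To get connectedness I would first observe that $B_\red$ is local, so its closed fibre $B_\red/\mathfrak m_A B_\red$ has connected spectrum; base changing along the purely inseparable extension $\kappa(y)\subseteq\overline{\kappa(y)}$ is a universal homeomorphism, so the geometric closed fibre is connected, and being normal it is a normal domain, hence geometrically integral over $\kappa(y)$. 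Then I would conclude by the permanence of ``geometrically irreducible (special) fibre'' along flat morphisms with reduced--irreducible base: every point of $\Spec B_\red$ has, by going-down, a generisation over $(0)$, so $\Spec(B_\red\otimes_{A_\red}K)$ is dense in the connected space $\Spec B_\red$, and integrality of the special fibre forces it to have a single generic point.

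\textbf{Main obstacle.} Everything except the final connectedness step is bookkeeping with going-down and strict Henselisations. The delicate part is precisely propagating irreducibility from the special fibre to the (geometric) generic fibre of the flat but \emph{not} finite-type morphism $\Spec B_\red\to\Spec A_\red$: the usual statements about geometrically integral fibres are phrased for morphisms of finite type, so one must either re-prove the needed instance directly in this generality, or quote the corresponding permanence results for flat morphisms in \cite{EGA4} / \cite{SP}. I would also double-check the auxiliary claim made in the reduction step, that geometric normality of the fibres of $f$ survives passage to $\Spec B^{sh}\to\Spec A^{sh}$, since there one replaces an algebraic closure of a residue field by a separable closure; this is harmless because normality is an \'etale-local property and the condition imposed is the ``geometric'' one, but it deserves to be spelled out.
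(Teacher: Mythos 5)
Your descent direction is fine (and in fact a little more economical than the paper's, which deduces it from the injectivity of $\mathcal{O}_{Y,y}^{sh}\arr\mathcal{O}_{X,x}^{sh}$ rather than from surjectivity on spectra), and the reduction of the whole statement to a faithfully flat local map $A\arr B$ of strictly Henselian local rings is legitimate. The problem is the last step of your ascent argument. Knowing that the generic fibre $\Spec(B_\red\otimes_{A_\red}K)$ is dense in the connected space $\Spec B_\red$ and that the special fibre is integral does \emph{not} force $B_\red$ to have a single minimal prime: since $B_\red$ is local and flat over the domain $A_\red$, \emph{every} irreducible component passes through the closed point and dominates $\Spec A_\red$, so the density and connectedness you invoke hold automatically no matter how many components there are, and the special fibre only sees $V(\mathfrak{m}_AB_\red)$. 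The implication ``special fibre irreducible $\Rightarrow$ generic fibre connected'' is genuinely false for flat local homomorphisms: for $A=k[t]_{(t)}\arr B=(k[x,y]/(xy))_{(x,y)}$ with $t\mapsto x+y$, $B$ is free of rank $3$ over $A$, the special fibre is the irreducible scheme $\Spec k[x]/(x^2)$, yet the generic fibre is $k(t)\times k(t)$ and $B$ has two minimal primes. Of course this example has a non-normal special fibre, but it shows that normality of the fibres must enter through an actual algebraic ascent theorem, not through the topology of the special fibre; you flag this yourself as ``the main obstacle'' but never supply the argument.

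The missing idea, which is how the paper proceeds, is to use the geometrically unibranch hypothesis in its full strength: let $A'$ be the integral closure of $A_\red$ in its fraction field, which by hypothesis is a \emph{local normal} domain with purely inseparable residue field extension. Base changing, $B\otimes_AA'$ is flat over the normal local ring $A'$ with normal fibres, hence normal by \cite[Theorem 23.9]{Mat}; the purely inseparable residue extension forces $(B\otimes_AA')_P$ to be local, hence a connected normal ring, hence a domain, and it is the integral closure of $B_P$, which embeds into it by flatness. This simultaneously produces the unique minimal prime and the required structure of the normalization of $B_P$. If you insist on working only with strict Henselizations and the ``unique minimal prime'' criterion of Remark \ref{basic properties}, you still need this detour through the normalization of the base (or an equivalent ascent statement); the purely topological propagation from the special fibre cannot replace it.
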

\begin{proof}
The ``if'' part follows from the fact that $\odi{Y,y}^{sh}\arr \odi{X,x}^{sh}$ is injective and \cite[pp.100, Definition 2]{Ray}. 

For the converse we can assume $Y=\Spec A$, where $(A,\pp)$ is a local and geometrically unibranch domain and $X=\Spec C$. Denote by $P$ the prime ideal of $C$ corresponding to $x\in X$. 
Let $B$ be the integral closure of $A$, which, by assumption, is a local domain whose residue field $l$ is purely inseparable over the residue field $k$ of $A$. 
Given a map $A\arr D$ set $C_D=C\otimes_A D$ and consider the coproducts
  \[
  \begin{tikzpicture}[xscale=1.5,yscale=-1.2]
    \node (A0_0) at (0, 0) {$A$};
    \node (A0_1) at (1, 0) {$C$};
    \node (A0_2) at (2, 0) {$C_P$};
    \node (A0_3) at (3, 0) {$A$};
    \node (A0_4) at (4, 0) {$C$};
    \node (A0_5) at (5, 0) {$C_P$};
    \node (A1_0) at (0, 1) {$B$};
    \node (A1_1) at (1, 1) {$C_B$};
    \node (A1_2) at (2, 1) {$(C_B)_P$};
    \node (A1_3) at (3, 1) {$k$};
    \node (A1_4) at (4, 1) {$C_k$};
    \node (A1_5) at (5, 1) {$(C_k)_P$};
    \node (A2_0) at (0, 2) {$l$};
    \node (A2_1) at (1, 2) {$C_l$};
    \node (A2_2) at (2, 2) {$(C_l)_P$};
    \node (A2_3) at (3, 2) {$l$};
    \node (A2_4) at (4, 2) {$C_l$};
    \node (A2_5) at (5, 2) {$(C_l)_P$};
    \path (A0_2) edge [->]node [auto] {$\scriptstyle{}$} (A1_2);
    \path (A1_1) edge [->]node [auto] {$\scriptstyle{}$} (A2_1);
    \path (A1_0) edge [->]node [auto] {$\scriptstyle{}$} (A2_0);
    \path (A1_5) edge [->]node [auto] {$\scriptstyle{}$} (A2_5);
    \path (A2_1) edge [->]node [auto] {$\scriptstyle{}$} (A2_2);
    \path (A1_2) edge [->]node [auto] {$\scriptstyle{}$} (A2_2);
    \path (A2_3) edge [->]node [auto] {$\scriptstyle{}$} (A2_4);
    \path (A1_4) edge [->]node [auto] {$\scriptstyle{}$} (A1_5);
    \path (A2_4) edge [->]node [auto] {$\scriptstyle{}$} (A2_5);
    \path (A2_0) edge [->]node [auto] {$\scriptstyle{}$} (A2_1);
    \path (A0_3) edge [->]node [auto] {$\scriptstyle{}$} (A1_3);
    \path (A0_4) edge [->]node [auto] {$\scriptstyle{}$} (A0_5);
    \path (A0_4) edge [->]node [auto] {$\scriptstyle{}$} (A1_4);
    \path (A0_1) edge [->]node [auto] {$\scriptstyle{}$} (A1_1);
    \path (A1_4) edge [->]node [auto] {$\scriptstyle{}$} (A2_4);
    \path (A1_0) edge [->]node [auto] {$\scriptstyle{}$} (A1_1);
    \path (A1_1) edge [->]node [auto] {$\scriptstyle{}$} (A1_2);
    \path (A0_0) edge [->]node [auto] {$\scriptstyle{}$} (A1_0);
    \path (A1_3) edge [->]node [auto] {$\scriptstyle{}$} (A1_4);
    \path (A0_0) edge [->]node [auto] {$\scriptstyle{}$} (A0_1);
    \path (A1_3) edge [->]node [auto] {$\scriptstyle{}$} (A2_3);
    \path (A0_3) edge [->]node [auto] {$\scriptstyle{}$} (A0_4);
    \path (A0_5) edge [->]node [auto] {$\scriptstyle{}$} (A1_5);
    \path (A0_1) edge [->]node [auto] {$\scriptstyle{}$} (A0_2);
  \end{tikzpicture}
  \]
Since $B$ is normal and $f$ has normal fibers it follows that $C_B$ and therefore $(C_B)_P$ are normal rings (\cite[Theorem 23.9, pp. 184]{Mat}). Since $A\arr C_P$ is flat and $A\arr B$ is injective and integral, it follows that $C_P\arr (C_B)_P$ is integral and injective. Since $k=A/\pp\arr B/\pp B$ is a purely inseparable morphism, $C_P/\pp C_P\arr (C_B)_P/\pp (C_B)_P$ is purely inseparable too. Thus there is a unique ideal in $(C_B)_P$ which restricts to the maximal ideal of $C_P$. Since a prime ideal in $(C_B)_P$ is maximal if and only if it restricts to a maximal ideal in $C_P$, we conclude that $(C_B)_P$ is a local ring. Thus the connected normal ring $(C_B)_P$ is a domain, and clearly $(C_B)_P$ is the integral closure of $C_P$. The residue field extension of $C_P\arr (C_B)_P$ is purely inseparable because $C_P/\pp C_P\arr (C_B)_P/\pp (C_B)_P$ is a purely inseparable ring morphism.
\end{proof}

\begin{defn}
Let $\stX$ be an algebraic stack and $x\in |\stX|$ be a point. We say that $\stX$ is geometrically unibranch at $x$ if there exists a smooth atlas $U\arr \stX$ and $u\in U$ over $x$ such that $U$ is geometrically unibranch at $u$. The stack $\sX$ is called geometrically unibranch if it is geometrically unibranch at all its points.
\end{defn}
\begin{rmk}
 The notion of geometrically unibranch for algebraic stacks does not depend on the choice of the atlas thanks to \ref{being smooth local}. Moreover it is immediate that an algebraic stack is geometrically unibranch if and only if it has a smooth atlas $U\arr\stX$ with $U$ geometrically unibranch, and one can verify that statements parallel to \ref{geometrically unibranch on geometric fiber} and \ref{being smooth local} remain true for  algebraic stacks.
\end{rmk}

\section{Base Change by Finite Algebraic Extensions}
Let $k$ be a field and $l/k$ be a finite field extension.

\begin{defn}
 Given a $k$-linear category $\shC$ we denote by $\shC\otimes_k l$ the category of pairs $(E,\lambda)$ where $E\in \shC$ and $\lambda\colon l\arr \End_\shC(E)$ is a $k$-algebra map.
\end{defn}
The category $\shC\otimes_k l$ is additive and it has a natural $l$-linear structure. Moreover if $\shD$ is an $l$-linear additive category then a  $k$-linear functor $Q\colon\shD\arr\shC$ extends uniquely to an $l$-linear functor $Q\colon \shD\arr\shC\otimes_k l$.
\begin{rmk}\label{qcoh for finite extension}
 If $\stX$ is a category fibered in groupoids with an fpqc atlas. over $k$ then the pushforward along $\beta\colon \stX\times_k l\arr \stX$ induces an equivalence $\beta_\#\colon  \QCoh(\stX\times_k l)\arr \QCoh(\stX)\otimes_k l$. Because $l/k$ is finite it also restricts to an equivalence $\beta_\#\colon \Vect(\stX\times_k l)\arr \Vect(\stX)\otimes_k l$. Under this equivalence the pullback of $\beta$ sends $\shF\in\QCoh(\stX)$ to $(\shF\otimes_k l,\rho)$ where $\rho$ is induced by the action on the right component of $\shF\otimes_kl$. In particular if $\shC$ is a $k$-Tannakian category then $\shC\otimes_k l$ is an $l$-Tannakian category: $\Pi_\shC\times_k l$ is an $l$-gerbe and $\Vect(\Pi_\shC\times_k l)\simeq \shC\otimes_k l$.
 \end{rmk}

 If $k$ is of characteristic $p>0$, $\stX/k$ is a category fibered in groupoids  with an fpqc atlas, $\stX_l:=\stX\times_k l$ and $\beta\colon\stX_l\arr \stX$ is the projection we have Cartesian diagrams
   \[
  \begin{tikzpicture}[xscale=1.5,yscale=-1.2]
    \node (A1_0) at (0, 1) {$\stX_l$};
    \node (A1_1) at (1, 1) {$\stX_l^{(1,l)}$};
    \node (A1_2) at (2, 1) {$\stX_l^{(2,l)}$};
    \node (A1_3) at (3, 1) {$\cdots$};
    \node (A1_4) at (4, 1) {$\Spec l$};
    \node (A2_0) at (0, 2) {$\stX$};
    \node (A2_1) at (1, 2) {$\stX^{(1)}$};
    \node (A2_2) at (2, 2) {$\stX^{(2)}$};
    \node (A2_3) at (3, 2) {$\cdots$};
    \node (A2_4) at (4, 2) {$\Spec k$};
    \path (A2_3) edge [->]node [auto] {$\scriptstyle{}$} (A2_4);
    \path (A1_4) edge [->]node [auto] {$\scriptstyle{}$} (A2_4);
    \path (A2_1) edge [->]node [auto] {$\scriptstyle{}$} (A2_2);
    \path (A1_0) edge [->]node [auto] {$\scriptstyle{}$} (A1_1);
    \path (A1_3) edge [->]node [auto] {$\scriptstyle{}$} (A2_3);
    \path (A1_1) edge [->]node [auto] {$\scriptstyle{}$} (A1_2);
    \path (A2_2) edge [->]node [auto] {$\scriptstyle{}$} (A2_3);
    \path (A1_3) edge [->]node [auto] {$\scriptstyle{}$} (A1_4);
    \path (A1_0) edge [->]node [auto] {$\scriptstyle{\beta}$} (A2_0);
    \path (A1_1) edge [->]node [auto] {$\scriptstyle{}$} (A2_1);
    \path (A1_2) edge [->]node [auto] {$\scriptstyle{}$} (A1_3);
    \path (A2_0) edge [->]node [auto] {$\scriptstyle{}$} (A2_1);
    \path (A1_2) edge [->]node [auto] {$\scriptstyle{}$} (A2_2);
  \end{tikzpicture}
  \]
This defines a functor $\beta_*\colon \Fdiv(\stX_l/l)\arr \Fdiv(\stX/k)$ which is the right adjoint of the pullback functor  $\beta^*\colon \Fdiv(\stX/k)\arr\Fdiv(\stX_l/l)$.

\begin{prop}\label{base change for Fdiv}
 The unique  $l$-linear functor $\beta_\#\colon \Fdiv(\stX_l/l)\arr \Fdiv(\stX/k)\otimes_k l$ induced by $\beta_*$ is a tensor equivalence. If $\Fdiv(\stX/k)$ is a $k$-Tannakian category then $\FDiv(\stX_l/l)$ is an $l$-Tannakian category (with the obvious tensor structure) and the pullback along $\beta^*\colon \Fdiv(\stX/k)\arr\Fdiv(\stX_l/l)$ corresponds to a $k$-morphism $\Pi_{\Fdiv(\stX_l/l)}\arr \Pi_{\Fdiv(\stX/k)}$ which induces an euqivalence:
 \[
 \Pi_{\Fdiv(\stX_l/l)}\arr \Pi_{\Fdiv(\stX/k)}\times_k l
 \]
\end{prop}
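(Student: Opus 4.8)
The plan is to reduce the statement to the vector-bundle case established in Remark \ref{qcoh for finite extension}, applied separately on each Frobenius twist $\stX^{(i)}$, and then to glue along the relative Frobenius maps. Recall that $\Fdiv(\stX_l/l)$ is $l$-linear by Theorem \ref{thm for Fdiv}, so the $k$-linear functor $\beta_*$ does extend uniquely to an $l$-linear functor $\beta_\#\colon \Fdiv(\stX_l/l)\arr\Fdiv(\stX/k)\otimes_k l$ by the universal property recalled just before the statement; concretely $\beta_\#$ sends $(E_i,\sigma_i)_i$ to $\beta_*(E_i,\sigma_i)$ equipped with the $l$-action coming from the inclusion $l\inj\Hl^0(\odi{\stX_l})$.

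First I would record that, for each $i\in\N$, the Cartesian ladder displayed before the statement identifies $\stX_l^{(i,l)}$ with $\stX^{(i)}\times_k l$, and that under this identification the relative Frobenius $\phi_{i,i+1}^l$ of $\stX_l^{(i,l)}/l$ is the base change along $\Spec l\arr\Spec k$ of the relative Frobenius $\phi_{i,i+1}$ of $\stX^{(i)}/k$ (functoriality of the relative Frobenius with respect to base change, as in the Notations of \cite{TZ}). Writing $\beta^{(i)}\colon\stX_l^{(i,l)}\arr\stX^{(i)}$ for the projection, Remark \ref{qcoh for finite extension} gives a monoidal equivalence $\beta^{(i)}_\#\colon\Vect(\stX_l^{(i,l)})\arr\Vect(\stX^{(i)})\otimes_k l$ whose composition with the pullback $(\beta^{(i)})^*$ is the canonical base-change functor $E\mapsto E\otimes_k l$ with its natural $l$-action. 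Since this canonical functor commutes with pullback along any $k$-morphism, and in particular along $\phi_{i,i+1}$, the equivalences $\beta^{(i)}_\#$ are compatible with the relative-Frobenius pullbacks: there are natural isomorphisms $\beta^{(i)}_\#\circ(\phi_{i,i+1}^l)^*\simeq\big((\phi_{i,i+1})^*\otimes_k l\big)\circ\beta^{(i+1)}_\#$.

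Granting this, I would assemble the $\beta^{(i)}_\#$ into the functor $\beta_\#$ on $F$-divided sheaves: an object $(E_i,\sigma_i)$ of $\Fdiv(\stX_l/l)$ goes to the system $(\overline E_i)$ with $\overline E_i\in\Vect(\stX^{(i)})$ obtained from $\beta^{(i)}_\#$, the transported gluing isomorphisms $\bar\sigma_i\colon\phi_{i,i+1}^*\overline E_{i+1}\arr\overline E_i$ (via the compatibility isomorphisms above), and the compatible system of $l$-actions; this datum is precisely an object of $\Fdiv(\stX/k)\otimes_k l$, and one checks it agrees with the $\beta_\#$ of the first paragraph. Because each $\beta^{(i)}_\#$ is an equivalence and an $l$-action on an object of $\Fdiv(\stX/k)$ is, by functoriality, the same as a compatible family of $l$-actions on its components, $\beta_\#$ is an equivalence; it is monoidal since the tensor structure of $\Fdiv$ is computed level-wise and each $\beta^{(i)}_\#$ is monoidal. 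This proves the first assertion.

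For the Tannakian part I would invoke the last sentence of Remark \ref{qcoh for finite extension}: if $\Fdiv(\stX/k)$ is $k$-Tannakian then $\Fdiv(\stX/k)\otimes_k l$ is $l$-Tannakian, its gerbe is $\Pi_{\Fdiv(\stX/k)}\times_k l$, and the canonical base-change functor $\Fdiv(\stX/k)\arr\Fdiv(\stX/k)\otimes_k l$ corresponds to the projection $\Pi_{\Fdiv(\stX/k)}\times_k l\arr\Pi_{\Fdiv(\stX/k)}$. Transporting through the equivalence $\beta_\#$ of the first part then shows $\Fdiv(\stX_l/l)$ is $l$-Tannakian with $\Pi_{\Fdiv(\stX_l/l)}\simeq\Pi_{\Fdiv(\stX/k)}\times_k l$; and since $\beta_\#\circ\beta^*$ is the canonical base-change functor (checked level-wise exactly as above, using $\beta_*(\beta^{(i)})^*E_i\simeq E_i\otimes_k l$), the pullback $\beta^*$ corresponds under Tannaka duality to the claimed $k$-morphism $\Pi_{\Fdiv(\stX_l/l)}\arr\Pi_{\Fdiv(\stX/k)}$, which is the projection. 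I expect the only real work to be the compatibility check in the third paragraph — that the level-wise equivalences of Remark \ref{qcoh for finite extension} are simultaneously compatible with all the relative Frobenii so that the transported data $\bar\sigma_i$ genuinely define an $F$-divided sheaf; the fact that $l/k$ may be inseparable is harmless, since only finiteness and flatness of $\beta$ enter through Remark \ref{qcoh for finite extension}.
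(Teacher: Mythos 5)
Your proposal follows essentially the same route as the paper: the equivalence $\beta_\#$ is obtained by applying Remark \ref{qcoh for finite extension} to each Frobenius twist $\stX^{(i)}$ and gluing along the relative Frobenii, and the Tannakian statement is obtained by transporting the structure of $\Vect(\Pi_{\Fdiv(\stX/k)}\times_k l)$ through this equivalence and identifying $\beta^*$ with the projection via $\beta_*\beta^*(-)\simeq(-)\otimes_k l$. The only point where the paper is more careful is in checking that the resulting equivalence is a \emph{tensor} equivalence (since it is built from pushforwards): rather than asserting level-wise monoidality, the paper shows the composite equivalence commutes with the pullbacks $\beta^*$ and $\gamma^*$ (which are tensor functors) and propagates tensoriality using the surjections $\beta^*\beta_*M\twoheadrightarrow M$.
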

\begin{proof} Consider the following diagram: 
$$\xymatrix{\Fdiv(\sX_l/l)\ar@/^2pc/[rr]_-{\beta_*}\ar[r]^-{\beta_\#}\ar@{.>}[d]^-{\alpha}&\Fdiv(\sX/k)\otimes_kl\ar[r]^-{\textup{For}_{\sX}}\ar[d]^-\cong&\Fdiv(\sX/k)\ar[d]_-a^-\cong\ar@/_3pc/[ll]_-{\beta^*}\\ \Vect(\Pi_{\Fdiv(\sX/k)}\times_kl)\ar@/_2pc/[rr]^-{\gamma_*}\ar[r]^-{\gamma_\#}&\Vect(\Pi_{\Fdiv(\sX/k)})\otimes_kl\ar[r]^-{\textup{For}_{\Pi}}&\Vect(\Pi_{\Fdiv(\sX/k)})\ar@/^3pc/[ll]^-{\gamma^*}}$$
where $\gamma: \Pi_{\FDiv(\stX/k)}\times_kl\arr \Pi_{\FDiv(\stX/k)}$ is the projection, and $\textup{For}_{\Pi}$, $ \textup{For}_{\sX} $ are forgetful functors.
 Applying \ref{qcoh for finite extension} to each $\sX^{(i)}$ it is easy to see that $\beta_\#$ is an equivalence. Assume now that $\Pi_{\FDiv(\stX/k)}$ is a $k$-gerbe. Again by \ref{qcoh for finite extension}   $\gamma_\#$ is an equivalence. Thus we get the dashed arrow $\alpha$, which  makes all the rectangles plus $\gamma_*,\beta_*$ commutative. By the uniqueness of the left adjoint  $\alpha$ also commutes with $\beta^*$, $\gamma^*$ and $a$. Using the fact that $a,\alpha^*,\gamma^*$ are all tensor functors and the fact that for any $M\in\Fdiv(\sX_l/l)$ there exists $N\in \Fdiv(\sX/k)$ with a surjection $\beta^*N\twoheadrightarrow M$ (e.g. $\beta^*\beta_*M\arr M$), one can easily deduce that $\alpha$ is a tensor equivalence. Thus $\Fdiv(\sX_l/l)$ is $l$-Tannakian with the Tannakian gerbe $\Pi_{\FDiv(\stX/k)}\times_kl$. The last claim follows immediately from the 2-commutative diagram.
\end{proof}

\begin{rmk} \label{base change for Fdiv_i} Let $k'/k$ be any field extension and $\sX$ be a geometrically connected algebraic stack of finite type over $k$. Also in this case there is a canonical map of $k'$-gerbes
\[
\delta:\Pi_{\FDiv(\stX\times_k k'/k')}\arr \Pi_{\FDiv(\stX/k)}\times_k k'
\]
and  using the method employed in \cite{De1} one can show that $\delta$ is a quotient if $\stX/k$ is smooth. If $\sX$ is not smooth it is unclear to us whether or not the above morphism is surjective. The main problem is that we don't know whether   the category of quasi-coherent $F$-divided sheaves on $\sX$ is   an abelian category. Thus the techniques developed in \cite[\S 4]{De1} can not be easily applied. 
\end{rmk}

\begin{prop}\label{essentially finite and base change}
 Let $\Gamma$ be an affine gerbe over $k$. Let $V\in\Vect(\Gamma)$ be an object. Let $\langle V\rangle$ be the sub Tannakian category generated by $V$, and denote $\Delta$ the corresponding gerbe. Let $\beta:\Gamma\times_kl\arr \Gamma$ be the projection. Then $\langle \beta^*V\rangle\subseteq \Vect(\Gamma\times_kl)$ corresponds to $\Delta\times_kl$. In particular the canonical map
 \[
 \widehat{(\Gamma\times_k l)}\arr \widehat\Gamma \times_k l
 \]
 where $\widehat{-}$ denotes the profinite quotient, is an equivalence. In other words if $\shC$ is a $k$-Tannakian category then
 \[
 \EF(\shC)\otimes_k l\arr \EF(\shC\otimes_k l)
 \]
 is an equivalence.
\end{prop}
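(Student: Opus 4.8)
The plan is to deduce the two ``in particular'' statements from the first assertion, which itself rests on one elementary observation. Translate everything to gerbes: by definition of $\Delta$ there is a faithfully flat morphism $q\colon\Gamma\arr\Delta$ of affine $k$-gerbes with $q^{*}\colon\Vect(\Delta)\arr\langle V\rangle$ an equivalence, and we may write $V\simeq q^{*}V_{0}$ with $V_{0}\in\Vect(\Delta)$. Writing $\beta_{\Delta}\colon\Delta\times_{k}l\arr\Delta$ for the projection and $q_{l}:=q\times_{k}l$, the map $q_{l}$ is again faithfully flat, so $(q_{l})^{*}$ identifies $\Vect(\Delta\times_{k}l)$ with a Tannakian subcategory of $\Vect(\Gamma\times_{k}l)$ whose associated gerbe is $\Delta\times_{k}l$; since $\beta^{*}V\simeq(q_{l})^{*}\beta_{\Delta}^{*}V_{0}$ lies in it, $\langle\beta^{*}V\rangle$ is contained in this subcategory. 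For the reverse inclusion I would use the equivalence $\Vect(\Delta\times_{k}l)\simeq\Vect(\Delta)\otimes_{k}l$ of \ref{qcoh for finite extension}: composing with $q^{*}$, every object of $(q_{l})^{*}\Vect(\Delta\times_{k}l)$ is isomorphic to one of the form $(W,\lambda)$ with $W\in\langle V\rangle$ and $\lambda\colon l\arr\End_{\Gamma}(W)$ a $k$-algebra homomorphism, where $(W,\lambda)$ is the corresponding object of $\Vect(\Gamma)\otimes_{k}l\simeq\Vect(\Gamma\times_{k}l)$.

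So it suffices to prove the following observation: \emph{for every $W\in\Vect(\Gamma)$ and every $\lambda$, the object $(W,\lambda)$ is a quotient of $\beta^{*}W$ in $\Vect(\Gamma\times_{k}l)$.} Indeed, by \ref{qcoh for finite extension} the object $\beta^{*}W$ corresponds to $(W\otimes_{k}l,\rho)$ with $\rho$ acting on the right tensor factor, and the $\odi\Gamma$-linear map $W\otimes_{k}l\arr W$, $w\otimes a\mapsto\lambda(a)(w)$, intertwines $\rho$ with $\lambda$ and is surjective (take $a=1$); since the forgetful functor $\Vect(\Gamma\times_{k}l)\arr\Vect(\Gamma)$, which under \ref{qcoh for finite extension} is $\beta_{*}$, is faithful, it reflects epimorphisms, so the map above is an epimorphism in $\Vect(\Gamma\times_{k}l)$. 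Now $\beta^{*}$ is an exact tensor functor, hence sends $\langle V\rangle$ into $\langle\beta^{*}V\rangle$; in particular $\beta^{*}W\in\langle\beta^{*}V\rangle$, and as a Tannakian subcategory is closed under quotients we get $(W,\lambda)\in\langle\beta^{*}V\rangle$. This gives $\langle\beta^{*}V\rangle=(q_{l})^{*}\Vect(\Delta\times_{k}l)$, i.e.\ the first assertion.

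For the consequences I would use the dictionary ``$\widehat{(\cdot)}$ on gerbes $\leftrightarrow$ $\EF(\cdot)$ on Tannakian categories'' (\ref{monodromy essentially finite}) together with \ref{base change for Fdiv} and \ref{qcoh for finite extension}: $\widehat\Gamma$ corresponds to $\EF(\Vect(\Gamma))$, hence $\widehat\Gamma\times_{k}l$ to $\EF(\Vect(\Gamma))\otimes_{k}l$, while $\widehat{\Gamma\times_{k}l}$ corresponds to $\EF(\Vect(\Gamma\times_{k}l))=\EF(\Vect(\Gamma)\otimes_{k}l)$. Thus all equivalences asserted in the proposition amount to the single equality of Tannakian subcategories
\[
\EF(\Vect(\Gamma))\otimes_{k}l\ =\ \EF(\Vect(\Gamma)\otimes_{k}l)
\]
inside $\Vect(\Gamma)\otimes_{k}l$. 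The inclusion ``$\subseteq$'' is immediate from the first part: if $W\in\EF(\Vect(\Gamma))$ then $\langle W\rangle$ has finite associated gerbe $\Delta_{W}$, so $\langle\beta^{*}W\rangle$ has the finite $l$-gerbe $\Delta_{W}\times_{k}l$; hence $\beta^{*}W$ is essentially finite, and so is every quotient of it, in particular every $(W,\mu)$.

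The inclusion ``$\supseteq$'' is where I expect the real difficulty. Any isomorphism $(F,\mu)\simeq(E,\lambda)$ in $\Vect(\Gamma)\otimes_{k}l$ restricts to an isomorphism $F\simeq E$ in $\Vect(\Gamma)$, so the underlying $\Gamma$-object of $W'\in\Vect(\Gamma\times_{k}l)$ is well defined and equals $\beta_{*}W'$; therefore the claim is equivalent to: \emph{if $W'\in\Vect(\Gamma\times_{k}l)$ is essentially finite, then $\beta_{*}W'$ is essentially finite in $\Vect(\Gamma)$.} To prove this I would write $W'\simeq\psi^{*}W_{0}$ for the monodromy map $\psi\colon\Gamma\times_{k}l\arr\Delta'$ to a finite $l$-gerbe $\Delta'$, form the Weil restriction $\Res_{l/k}\Delta'$, which is finite over $k$, and let $\widetilde\psi\colon\Gamma\arr\Res_{l/k}\Delta'$ be the morphism adjoint to $\psi$. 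Since $\Gamma$ is an affine gerbe it is inflexible (\ref{inflexible properties}), so $\widetilde\psi$ factors as $\Gamma\arr\Delta''\arr\Res_{l/k}\Delta'$ with $\Delta''$ a finite $k$-gerbe; base changing along $l/k$ and composing with the counit $(\Res_{l/k}\Delta')\times_{k}l\arr\Delta'$ recovers $\psi$, so $\psi$, and with it $W'$, factors through $\Delta''\times_{k}l$. As $\Delta''$ is finite over $k$, this exhibits $W'$ as an object of $\Vect(\Delta''\times_{k}l)\simeq\Vect(\Delta'')\otimes_{k}l\subseteq\EF(\Vect(\Gamma))\otimes_{k}l$, which is what we want. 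The points needing care are the construction of $\Res_{l/k}\Delta'$ as a finite stack over $k$ and the compatibility of the adjunction unit and counit with base change; after neutralising $l/k$ by a finite extension this reduces to Weil restriction of finite affine group schemes, which behaves well. I regard this last step as the main obstacle.
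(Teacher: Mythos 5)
Your proof of the first assertion is correct and is in substance the paper's own argument: your observation that $(W,\lambda)$ is a quotient of $\beta^*W\simeq(W\otimes_k l,\rho)$ is exactly the counit epimorphism $\beta^*\beta_*W'\twoheadrightarrow W'$ that the paper uses, and the two inclusions $\langle\beta^*V\rangle\subseteq\langle V\rangle\otimes_k l$ and the converse are obtained the same way. The reduction of the ``in particular'' statements to the single equality $\EF(\Vect(\Gamma))\otimes_k l=\EF(\Vect(\Gamma)\otimes_k l)$, and the inclusion $\subseteq$, are also fine.

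The gap is in the inclusion $\supseteq$, precisely at the point you yourself flag as the ``main obstacle''. Your argument hinges on $\Res_{l/k}\Delta'$ being a finite stack over $k$ together with the adjunction $\Hom_k(\Gamma,\Res_{l/k}\Delta')\simeq\Hom_l(\Gamma\times_k l,\Delta')$, and neither is supplied by anything cited in the paper. This is not a routine verification: Weil restriction of a gerbe along a finite flat (possibly inseparable) extension is delicate --- already $\Res_{l/k}(\sB_l G)$ need not coincide with $\sB_k(\Res_{l/k}G)$, and exactness of $\Res_{l/k}$ fails for finite flat non-\'etale group schemes, so finiteness and even the gerbe/stack structure of $\Res_{l/k}\Delta'$ require an argument you do not give. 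More importantly, the detour is unnecessary: the first claim, which you have already proved, closes this direction directly. Applying it to $\beta_*W'$, the monodromy gerbe of $\beta^*\beta_*W'$ is $\Delta\times_k l$, where $\Delta$ is the monodromy gerbe of $\beta_*W'$; since finiteness of a $k$-gerbe may be checked after the faithfully flat base change $\Spec l\arr\Spec k$, $\beta_*W'$ is essentially finite if and only if $\beta^*\beta_*W'$ is. One is then reduced to showing that $W'$ essentially finite forces $\beta^*\beta_*W'$ essentially finite, which the paper gets from the fact that $\beta^*\beta_*W'=W'\otimes_l(l\otimes_k l)$ is assembled from copies (twists) of $W'$, all with finite monodromy; this replaces your entire Weil-restriction construction. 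I would rewrite the last step along these lines rather than trying to make the restriction-of-scalars argument rigorous.
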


\begin{proof}
We have to show that $\langle \beta^*V\rangle= \langle V\rangle\otimes_kl$, that is an object $W\in\Vect(\Gamma\times_kl)$ belongs to $\langle\beta^*V\rangle$ if and only if $\beta_*W\in\langle V\rangle$. Since $\beta_*\beta^*V\in\langle V\rangle$, $\langle\beta^*V\rangle\subseteq \langle V\rangle\otimes_kl$. Composing with the forgetful functor, we see that $W\in \langle\beta^*V\rangle$ implies that $\beta_*W\in \langle V\rangle$. Conversely, if $\beta_*W\in \langle V\rangle$, then $\beta^*\beta_*W\in \langle \beta^*V\rangle$. Thus $W\in  \langle \beta^*V\rangle$ as there is a surjection $\beta^*\beta_*W\twoheadrightarrow W$. This finishes the first claim.

For the second claim we have to show that a vector bundle $V\in \Vect(\Gamma\times_kl)$ is essentially finite if and only if $\beta_*V\in \Vect(\Gamma)$ is essentially finite.  By the first claim $\beta_*V$ is essentially finite if and only if $\beta^*\beta_*V$ is essentially finite. Since we have a surjection $\beta^*\beta_*V\twoheadrightarrow V$ and $\beta^*\beta_*V$ is a finite direct sum of $V$, $V$ is essentially finite if and only if $\beta^*\beta_*V$ is so.
\end{proof}



\end{document}